\documentclass{article}
\usepackage{graphicx} 

\title{\textbf{Deformations of Non-Kähler Hyperbolicity Notions and Modifications of Degenerate Balanced Manifolds}}
\author{ABDELOUAHAB KHELIFATI}
\date{}
\usepackage{graphicx} 
\usepackage{booktabs}
\usepackage[english]{babel}
\hyphenpenalty 10000
\setlength{\parindent}{15pt}
\usepackage{amsmath}
\usepackage{amssymb}
\usepackage{amsfonts}
\usepackage{amsthm}
\usepackage{mathrsfs}
\usepackage{enumitem}
\usepackage{extpfeil}
\usepackage{extarrows}
\usepackage{musicography}
\usepackage[utf8]{inputenc}
\usepackage[T1]{fontenc}
\usepackage[parfill]{parskip}
\usepackage{hyperref}
\hypersetup{colorlinks=true, linkcolor=blue, citecolor=red, urlcolor=blue}
\usepackage{lipsum}
\usepackage{mathtools}
\usepackage{tikz-cd}
\usepackage[all]{xy}

\usepackage{makeidx}

\renewcommand{\thesubsection}{\arabic{section}.\arabic{subsection}}
\usepackage{fullpage}
\linespread{1.2}
\usepackage{amssymb, amsmath, amsthm, amscd}
\usepackage{titlesec}
\titleformat{\subsection}
  {\normalfont\Large\bfseries}
  {\thesubsection}{1em}{}
\numberwithin{equation}{section}

\theoremstyle{plain}
\newtheorem{theorem}{Theorem}[section]
\newtheorem{dfn}[theorem]{Definition}
\newtheorem{thm}[theorem]{Theorem}
\newtheorem*{thmA}{Theorem A}
\newtheorem*{thmB}{Theorem B}
\newtheorem*{thmC}{Theorem C}
\newtheorem*{thmD}{Theorem D}
\newtheorem*{thmE}{Theorem E}
\newtheorem{prop}[theorem]{Proposition}
\newtheorem{lem}[theorem]{Lemma}
\newtheorem{cor}[theorem]{Corollary}
\newtheorem{conj}[theorem]{Conjecture}
\theoremstyle{definition}

\newtheorem{rmk}[theorem]{Remark}
\newtheorem{obs}[theorem]{Observation}
\newtheorem{ex}[theorem]{Examples}
\newtheorem{pbm}{Problem}
\newcommand{\B}{\mathbb{B}}
\newcommand{\C}{\mathbb{C}}

\newcommand{\G}{\mathbb{G}}
\newcommand{\CP}{\mathbb{P}}
\newcommand{\R}{\mathbb{R}}
\newcommand{\Sp}{\mathbb{S}}

\begin{document}

\maketitle

\vspace{5ex}

\large{\textbf{Abstract.}} We study deformation properties of balanced hyperbolicity, with a particular emphasis on degenerate balanced manifolds and their behavior under smooth modifications. 

\hspace{1ex}From a different perspective, we introduce two new notions of hyperbolicity for compact complex non-Kähler manifolds $X$ of complex dimension $\dim_{\mathbb{C}}X=n$, in general degree $2p$ with $1 \leq p \leq n-1$. These notions are motivated by the work of D.~Popovici and H.~Kasuya on partial hyperbolicity in arbitrary degree and by the work of F.~Haggui and S.~Marouani on $p$-Kähler hyperbolicity. The first notion, called \emph{$p$-SKT hyperbolicity}, extends SKT hyperbolicity and Gauduchon hyperbolicity to degree $2p$. Similarly, the second notion, called \emph{$p$-HS hyperbolicity}, generalizes the notion of strongly Gauduchon hyperbolicity introduced by Y.~Ma.

\hspace{1ex}We then analyze the relationships between these analytic notions and geometric notions of hyperbolicity, namely Brody/Kobayashi hyperbolicity and \emph{$p$-cyclic hyperbolicity} in degree $2p$ for $2 \leq p \leq n-1$. In addition, we study the behavior of $p$-HS hyperbolicity and $p$-Kähler hyperbolicity under holomorphic deformations, establishing openness results for these properties.

\section{Introduction}
\hspace{1ex}In one of his seminal papers \cite{gromov1991kahler}, M. Gromov introduced a notion of hyperbolicity for compact Kähler manifolds, called \emph{Kähler hyperbolicity}, defined in terms of a boundedness property relative to Kähler metrics. More precisely, let $(X,\omega)$ be a compact Hermitian manifold with $\dim_{\mathbb{C}}X=n$. A $k$-form $\alpha$ on $X$ is said to be $\widetilde{d}$(\emph{bounded}) if its pullback $\widetilde{\alpha}:=\pi^*\alpha$ to the universal cover $\pi: \widetilde{X} \longrightarrow X$ satisfies $\widetilde{\alpha}=d\beta$ for some $\widetilde{\omega}$-bounded $(k-1)$-form $\beta$ on $\widetilde{X}$. This notion clearly does not depend on the choice of the Hermitian metric on $X$, since all Hermtitian metrics on a compact complex manifold are comparable. The manifold $X$ is called \emph{Kähler hyperbolic} if it admits a Kähler metric $\omega$ that is $\widetilde{d}$(bounded).

\hspace{1ex}Over the last few years, several authors have proposed extensions of this hyperbolicity notion beyond the Kähler setting in several ways. First, D. Popovici and S. Marouani introduced the notion of \emph{balanced hyperbolicity} in \cite{marouani2023balanced} as a generalization of Gromov's Kähler hyperbolicity: a compact complex manifold $X$ is said to be \emph{balanced hyperbolic} if it admits a $\widetilde{d}$(bounded) balanced metric $\omega$. This class includes, for example, Kähler hyperbolic manifolds and another special class called \emph{degenerate balanced} manifolds). Further generalizations in the same spirit include \emph{SKT hyperbolicity} and \emph{Gauduchon hyperbolicity}, defined by S. Marouani in \cite{marouani2023skt}, as well as \emph{sG hyperbolicity}, introduced by Y. Ma in \cite{ma2024strongly}.

\hspace{1ex}The main goal of this paper is to study the deformation stability of such non-Kähler hyperbolicity notions, and to introduce new hyperbolicity concepts that fit naturally into this framework. But first, we obtain the following result concerning the behavior of degenerate balanced manifolds under modifications:
\begin{thmA}
    Let $f:\widehat{X}\longrightarrow X$ be a smooth \textbf{surjective} modification of compact complex manifolds, with $\dim_\C X=n\geq 3$. If $\widehat{X}$ is degenerate balanced, then $X$ is degenerate balanced.
\end{thmA}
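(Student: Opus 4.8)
The plan is to transport the degenerate balanced structure along $f$ by a push‑forward of currents, and then to regularise the resulting object into a genuine smooth metric. Let $Z\subset X$ be the analytic subset over whose complement $f$ is a biholomorphism; since $X$ and $\widehat X$ are smooth, $Z$ has complex codimension $\geq 2$, hence $\dim_\R Z\leq 2n-4$, and $E:=f^{-1}(Z)$ is the exceptional divisor. Fix a balanced metric $\widehat\omega$ on $\widehat X$ together with a smooth $(2n-3)$‑form $\widehat\Gamma$ with $\widehat\omega^{\,n-1}=d\widehat\Gamma$, and put $T:=f_*(\widehat\omega^{\,n-1})$ and $\Gamma_0:=f_*\widehat\Gamma$. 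Then $T$ is an $L^1$ current on $X$ of bidegree $(n-1,n-1)$, it is $\geq 0$, and $T=d\Gamma_0$ is $d$‑exact; moreover on $X\setminus Z$ it coincides with the $(n-1)$‑st power of the push‑forward metric $(f^{-1})^*\widehat\omega$, so there it is smooth and strongly positive in Michelsohn's sense. Thus $X$ already carries a $d$‑exact, strongly positive $(n-1,n-1)$‑current which is a genuine smooth degenerate‑balanced‑type form everywhere off the thin set $Z$.

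It then remains to upgrade $T$ to an honest smooth, strongly positive, $d$‑exact $(n-1,n-1)$‑form $\Omega$ on $X$: once this is done, Michelsohn's theorem that strongly positive $(n-1,n-1)$‑forms are precisely the $(n-1)$‑st powers of positive $(1,1)$‑forms produces a Hermitian metric $\omega$ with $\omega^{\,n-1}=\Omega$, and then $d\omega^{\,n-1}=0$ and $\{\omega^{\,n-1}\}=0$ in $H^{2n-2}_{\mathrm{dR}}(X)$, i.e. $X$ is degenerate balanced. To build $\Omega$ I would first smooth the primitive: applying a smoothing operator $R_\varepsilon$ to $\Gamma_0$ gives smooth forms $\Gamma_\varepsilon:=R_\varepsilon\Gamma_0$ with $d\Gamma_\varepsilon=R_\varepsilon T$ smooth, $d$‑exact, and $R_\varepsilon T\to T$ in $L^1(X)$ and in $C^\infty_{\mathrm{loc}}(X\setminus Z)$; hence $R_\varepsilon T$ is strongly positive on any prescribed compact subset of $X\setminus Z$ as soon as $\varepsilon$ is small. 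One then has to restore positivity on a shrinking neighbourhood of $Z$ by adding a small multiple $t\,\psi$ of a fixed smooth $(n-1,n-1)$‑form $\psi$ which is simultaneously $d$‑exact, globally semipositive, and strictly positive near $Z$; choosing $t$ and then $\varepsilon$ in the right order, $\Omega:=R_\varepsilon T+t\,\psi$ is smooth, $d$‑exact and strongly positive everywhere.

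The crux — and the main obstacle — is the construction of $\psi$: one must gain positivity near $Z$ \emph{without leaving the class of $d$‑exact forms}, since a naive mollification or partition‑of‑unity patch is not exact and adding a Hermitian correction $\varepsilon\,\omega_X^{\,n-1}$ alters the de Rham class. A natural source for $\psi$ is the exceptional divisor: with $\Theta$ a Chern curvature form of $\mathcal O_{\widehat X}(E)$ one has a current identity $[E]-\Theta=i\partial\bar\partial u$ on $\widehat X$ for a quasi‑plurisubharmonic $u$, and $f_*[E]=0$ (a closed positive current of bidimension $(n-1,n-1)$ cannot be carried by $Z$ because $\dim_\C Z\leq n-2<n-1$), so that appropriate push‑forwards of products $\widehat\omega^{\,n-1-k}\wedge\Theta^{k}$ are $d$‑exact while being strictly positive along $Z$ for the right $k$; alternatively one may desingularise $Z$ and work on the blow‑up. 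This is exactly where $\dim_\C X=n\geq 3$ is used in an essential way, guaranteeing $\operatorname{codim}_\C Z\geq 2$ while still permitting $\dim_\C Z\geq 1$; note also that for $n=2$ the statement is vacuous, since a compact complex surface is balanced if and only if it is Kähler and no Kähler class can vanish. Finally, the whole argument can equivalently be recast by contradiction through a Hahn–Banach/Michelsohn duality — $X$ is degenerate balanced iff it carries no nonzero positive $(1,1)$‑current annihilating every $d$‑exact $(n-1,n-1)$‑form — in which case one pairs such a hypothetical current $S$ against $T$: it pairs to $0$ because $S$ kills the smooth exact approximants $R_\varepsilon T$, yet to something strictly positive because $S\neq 0$ places no mass on $Z$ (support theorem) while $T$ is strictly positive on $X\setminus Z$; the delicate point in this route is the same as above, namely controlling the interaction with the singularities of $T$ along $Z$ in the limit.
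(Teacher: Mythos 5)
There is a genuine gap, and you flag it yourself: the entire argument hinges on producing a smooth $(n-1,n-1)$-form $\psi$ on $X$ that is simultaneously $d$-exact, globally semipositive and strictly positive near $Z$ (or, in your dual reformulation, on justifying the pairing $\langle S,T\rangle$ of two positive currents across the singular set $Z$), and neither step is carried out. The candidate $\psi$'s you sketch do not obviously work: the curvature $\Theta$ of $\mathcal O_{\widehat X}(E)$ is negative in the $E$-directions, the push-forwards $f_*(\widehat\omega^{\,n-1-k}\wedge\Theta^k)$ are again only currents (so the regularity problem reappears), and there is no reason for them to be positive, let alone strictly positive along $Z$. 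The regularization step is also shakier than stated: a smoothing operator on a manifold that commutes with $d$ need not preserve positivity of currents, and $R_\varepsilon T$ can develop a negative part near $Z$ whose size is not controlled as $\varepsilon\to 0$, so the order-of-quantifiers trick with $t$ and $\varepsilon$ does not close without further estimates. In the duality route, $S(R_\varepsilon T)=0$ is fine, but passing to the limit to conclude $\int_{X\setminus Z}S\wedge T>0$ requires making sense of the wedge of the positive $(1,1)$-current $S$ with the $(n-1,n-1)$-current $T$, which blows up along $Z$; this is exactly the ``delicate point'' you concede.

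The paper's proof goes in the opposite direction and avoids all of this. It argues by contraposition using the characterization you quote at the end (Proposition 5.4 of Popovici: $X$ is degenerate balanced iff it carries no non-zero $d$-closed positive $(1,1)$-current): if $X$ is not degenerate balanced, take such a current $T\neq 0$ on $X$, pull it back to $\widehat X$ via its local potentials ($T=i\partial\bar\partial\varphi$ locally, so $f^*T=i\partial\bar\partial(\varphi\circ f)$ glues to a closed positive $(1,1)$-current, by M\'eo), and then show $f^*T\neq 0$: if $f^*T=0$ then $\operatorname{Supp}T$ lies in the analytic set $A$ over which $f$ fails to be biholomorphic, and the support theorems force either $T=0$ (components of codimension $\geq 2$) or $T=\sum_j\lambda_j[A_j]$ with all $\lambda_j=0$ (divisorial components), a contradiction. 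Pulling the obstruction \emph{up} rather than pushing the metric \emph{down} is what makes the argument work with no regularization at all; if you want to salvage your approach, the cleanest fix is to switch to this dual formulation from the start.
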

\hspace{1ex}We then investigate the openness under holomorphic deformations of balanced hyperbolicity and of the degenerate balanced class of manifolds:
\begin{thmB}
    Let $\{X_t:t\in\B_\varepsilon(0)\}$ be an analytic family of compact complex $n$-dimensional manifolds such that $X_0$ is balanced. Assume that the fibers $X_t$ satisfy the weak $(n-1,n)$-$\partial\bar\partial$-property for $t\neq 0$.
\begin{enumerate}
    \item If $X_0$ is balanced hyperbolic, the fibers $X_t$ carry balanced metrics $\omega_t$ that are Gauduchon hyperbolic for small $t$.
    \item If moreover, $X_0$ is degenerate balanced and the fibers $X_t$ satisy the mild $(n,n-2)$-$\partial\bar\partial$-property for $t\neq 0$, the fibers $X_t$ are degenerate balanced for small $t$.
\end{enumerate}
\end{thmB}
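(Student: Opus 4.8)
The plan is to prove both parts by deforming the balanced structure of $X_0$ to the nearby fibres in a controlled way and then tracking what survives on the universal cover. Fix once and for all a $C^\infty$ trivialisation of the family, so that all fibres carry one underlying real manifold $M$ with a fixed universal covering $\pi\colon\widetilde M\to M$; only the complex structure $J_t$ (and hence $\partial_t,\bar\partial_t$, both upstairs and downstairs) varies, smoothly, coinciding with $J_0,\partial_0,\bar\partial_0$ at $t=0$. Write $\omega_0$ for the given balanced metric on $X_0$ and $\Omega_0:=\omega_0^{\,n-1}$, a $d$-closed strictly positive $(n-1,n-1)$-form for $J_0$. \emph{Step 1 (deforming the balanced metric).} Let $\Omega_t^{(0)}$ be the $(n-1,n-1)_{J_t}$-component of the unchanged form $\Omega_0$; since $\Omega_0$ is purely $(n-1,n-1)_{J_0}$ and $J_t\to J_0$, the discrepancy $\Omega_0-\Omega_t^{(0)}$ is $O(|t|)$ and $\Omega_t^{(0)}$ is strictly positive for small $t$. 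From $d\Omega_0=0$ one reads off that $-\bar\partial_t\Omega_t^{(0)}$ equals $d$ of the $(n-2,n)_{J_t}$-component of $\Omega_0$; in particular it is an $O(|t|)$, Bott--Chern-closed, $d$-exact $(n-1,n)_{J_t}$-form. The $(n-1,n)$-th weak $\partial\bar\partial$-Lemma on $X_t$ ($t\neq0$) then produces a $(n-2,n-1)_{J_t}$-form $\xi_t$ with $\partial_t\bar\partial_t\xi_t=-\bar\partial_t\Omega_t^{(0)}$, and — this is the technical heart — one may take $\|\xi_t\|\lesssim\|\bar\partial_t\Omega_t^{(0)}\|=O(|t|)$ with a bound uniform as $t\to0$. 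Setting $\rho_t:=-\partial_t\xi_t-\bar\partial_t\bar\xi_t$, a real $(n-1,n-1)_{J_t}$-form of size $O(|t|)$ with $\bar\partial_t\rho_t=-\bar\partial_t\Omega_t^{(0)}$, the form $\Omega_t:=\Omega_t^{(0)}+\rho_t$ is real, $(n-1,n-1)_{J_t}$, still strictly positive, and $d$-closed; by Michelsohn's theorem it is the $(n-1)$-st power of a unique Hermitian metric $\omega_t$, which is therefore balanced on $X_t$, with $\omega_t\to\omega_0$. (This reproves, quantitatively, the deformation-openness of the balanced condition under the weak $\partial\bar\partial$-Lemma.)

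\emph{Step 2 (transferring the boundedness: part (1)).} Let $\Gamma_0$ be the $\widetilde\omega_0$-bounded $(2n-3)$-form on $\widetilde M$ with $d\Gamma_0=\pi^*\Omega_0$ supplied by the balanced hyperbolicity of $X_0$. Decomposing $d\Gamma_0=\pi^*\Omega_0$ by $J_t$-bidegree, its $(n-1,n-1)_{J_t}$-part — which is exactly $\pi^*\Omega_t^{(0)}$ — equals $\partial_t\gamma_t+\bar\partial_t\bar\gamma_t$, where $\gamma_t$ is the $(n-2,n-1)_{J_t}$-component of $\Gamma_0$, still $\widetilde\omega_0$-bounded. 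Since $\pi^*\rho_t=\partial_t(-\pi^*\xi_t)+\bar\partial_t\overline{(-\pi^*\xi_t)}$, we get on $\widetilde M$
\[\pi^*\omega_t^{\,n-1}=\pi^*\Omega_t^{(0)}+\pi^*\rho_t=\partial_t a_t+\bar\partial_t\bar a_t,\qquad a_t:=\gamma_t-\pi^*\xi_t,\]
and $a_t$ is bounded, being the sum of a bounded form and the pullback of a smooth form on the compact manifold $X_t$. This is precisely the boundedness in the definition of Gauduchon hyperbolicity; since $\omega_t$ is in fact balanced, hence Gauduchon, $\omega_t$ is a balanced metric on $X_t$ that is Gauduchon hyperbolic for small $t$, proving (1). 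One does not in general obtain balanced hyperbolicity because this construction yields a bounded $(\partial_t,\bar\partial_t)$-primitive, not a bounded $d$-primitive, for $\pi^*\omega_t^{\,n-1}$.

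\emph{Step 3 (part (2)).} If $X_0$ is degenerate balanced, it carries a balanced metric $\omega_0$ with $\omega_0^{\,n-1}=\partial_0\bar\partial_0\Psi_0$ for a smooth $(n-2,n-2)$-form $\Psi_0$ (should the definition be taken in the weaker, Aeppli-exact form, one transports the primitive and restores $d$-closedness by the correction of Step 1, and the $(n-1,n)$-th weak $\partial\bar\partial$-Lemma on $X_t$ re-enters). Let $\Psi_t$ be the $(n-2,n-2)_{J_t}$-component of $\Psi_0$; then $\Omega_t:=\partial_t\bar\partial_t\Psi_t\to\partial_0\bar\partial_0\Psi_0=\omega_0^{\,n-1}$ in $C^\infty$ as $t\to0$, so $\Omega_t$ is $d$-closed, strictly positive and $\partial_t\bar\partial_t$-exact for small $t$; Michelsohn's theorem gives a balanced metric $\omega_t$ on $X_t$ with $\omega_t^{\,n-1}=\partial_t\bar\partial_t\Psi_t$, so $X_t$ is degenerate balanced for small $t$.

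\emph{Main obstacle.} The decisive difficulty is the \emph{uniform} solvability invoked in Step 1: that the $\partial_t\bar\partial_t$-primitive $\xi_t$ of the $O(|t|)$, Bott--Chern-closed, $d$-exact form $-\bar\partial_t\Omega_t^{(0)}$ exists with size $O(|t|)$ and with constants that do not degenerate as $t\to0$. This is exactly what the $(n-1,n)$-th weak $\partial\bar\partial$-Lemma on the fibres $X_t$, $t\neq0$, provides: it confines the obstruction so that the relevant Bott--Chern/Dolbeault Laplacians admit $t$-uniform Hodge-theoretic estimates — one combines the lower bound coming from the Lemma with the upper-semicontinuity of the Hodge numbers along the family to force the needed constancy. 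Everything else — preservation of positivity, the passage to $(n-1)$-st roots via Michelsohn, and the bidegree bookkeeping that carries the bounded primitive to $\widetilde M$ — is comparatively soft once this uniform estimate is in hand.
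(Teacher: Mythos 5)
Your Steps 1 and 2 are essentially the paper's argument for part (1): decompose $\Omega_0=\omega_0^{\,n-1}$ by $J_t$-bidegree, use the $(n-1,n)$-th weak $\partial\bar\partial$-Lemma to correct the positive component $\Omega_t^{(0)}$ by a real Aeppli-exact term into a $d$-closed positive $(n-1,n-1)$-form, take the $(n-1)$-st root by Michelsohn's trick, and then lift: the $(n-1,n-1)_{J_t}$-component of $d\Gamma_0$ plus the pullback of the (smooth on the compact fiber, hence bounded upstairs) correction gives a bounded $(\partial_t+\bar\partial_t)$-primitive of $\widetilde{\omega_t}^{\,n-1}$, i.e.\ Gauduchon hyperbolicity. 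The one soft spot is your ``uniform solvability'' discussion: the paper settles the smooth dependence of the correction on $t$ (including at $t=0$, where the weak Lemma is not assumed) by taking the minimal $L^2$-norm solution of the $\partial\bar\partial$-equation and invoking Kodaira--Spencer's regularity of the associated Green operators, following Fu--Li--Yau; your sketch gestures at the same mechanism but does not actually establish it.

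Part (2) has a genuine gap. First, \emph{degenerate balanced} means $\omega_0^{\,n-1}=d\alpha$ for a smooth $(2n-3)$-form $\alpha$ on $X_0$, not $\omega_0^{\,n-1}=\partial_0\bar\partial_0\Psi_0$; the latter is strictly stronger on a general non-$\partial\bar\partial$-manifold, and under it your Step 3 is essentially trivial and does not even use the weak $\partial\bar\partial$-Lemma. With the correct definition, decomposing $d\alpha$ by $J_t$-bidegree and adding your Step 1 correction only produces $\omega_t^{\,n-1}=\partial_t(\cdot)+\bar\partial_t(\cdot)$, i.e.\ Aeppli-exactness --- precisely the shortfall you yourself identify at the end of Step 2 (``a bounded $(\partial_t,\bar\partial_t)$-primitive, not a bounded $d$-primitive''). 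To conclude that $X_t$ is degenerate balanced you must exhibit a genuine $d$-primitive of $\omega_t^{\,n-1}$. The paper does this by observing that $v_t:=\partial_t(\alpha_t^{n-2,n-1}+iu_t)$ is also $\bar\partial_t$-closed (because $\Omega_t$ is $d$-closed) and likewise for its conjugate piece $w_t$, and then solving the constrained systems $\partial_t\beta_t=v_t,\ \bar\partial_t\beta_t=0$ and $\bar\partial_t\gamma_t=w_t,\ \partial_t\gamma_t=0$ via the minimal-norm formulas built from the Bott--Chern Green operator, so that $d(\beta_t+\gamma_t)=\partial_t\beta_t+\bar\partial_t\gamma_t=\Omega_t$. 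Your proposal contains no substitute for this step, and without it the conclusion of part (2) does not follow.
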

\hspace{1ex} When the family $\{X_t:t\in\B_\varepsilon(0)\}$ satisfies the $\partial\bar\partial$-property, we get the following stronger statement, which is new in the context of degenerate balanced manifolds:
\begin{thmC} Let $\{X_t:t\in\B_\varepsilon(0)\}$ be an analytic family of compact $\partial\bar\partial$-manifolds of complex dimension $n$ such that $X_0$ is balanced.
\begin{enumerate}
    \item Assume $X_0$ is balanced hyperbolic, and let $\omega$ be a balanced hyperbolic metric on $X_0$. Then $\omega$ extends to a $\mathscr{C}^\infty$-family of balanced metrics $\{\omega_t\}_{t\sim 0}$ that are Gauduchon hyperbolic on the respective fibers $X_t$ for small $t$.
    \item If moreover, $\omega$ is degenerate balanced, the $\mathscr{C}^\infty$-family of balanced metrics $\{\omega\}_{t\sim0}$ on the fibers $\{X_t\}_{t\sim 0}$ are degenerate balanced with a $\mathscr{C}^\infty$-family of $d$-potentials $\{w_t\}_{t\sim 0}$ for small $t$, i.e. $\omega=dw_0$ and $\omega_t=dw_t$, $\forall t\sim 0$.
\end{enumerate}
\end{thmC}
\hspace{1ex}Other notions of hyperbolicity were introduced by D.~Popovici and H.~Kasuya in \cite{kasuya2025partially}, where they initiated the concept of \emph{partial hyperbolicity}, which constitutes the first hyperbolicity notion formulated in intermediate degree $2p$ with $1 \leq p \leq n-1$. In a related direction, a natural way to generalize the previously mentioned \emph{metric} notions of hyperbolicity is to impose analogous conditions in bidegree ($p,p$). This approach was developed by S.~Marouani and F.~Haggui in \cite{haggui2023compact}, where they introduced the notion of \emph{$p$-K\"ahler hyperbolicity} as a degree $p$ generalization of K\"ahler hyperbolicity.

\hspace{1ex}In this paper, we introduce the notion of \textbf{Hermitian-symplectic} (or \textbf{HS}) \textbf{hyperbolicity}, in which the Kähler condition is replaced by the Hermitian-symplectic condition. This notion is motivated in part by a question raised by J. Streets and G. Tian on the existence of non-Kähler Hermitian–symplectic manifolds (\cite{streets2010parabolic}, Question 1.7). We investigate the similarities and differences between hyperbolicity properties in the Hermitian–symplectic and Kähler settings. We also define degree $2p$ analogues of this notion and of SKT hyperbolicity, namely \textbf{\emph{p}-SKT hyperbolicity} and \textbf{\emph{p}-HS hyperbolicity}, and we prove the following results:
\begin{thmD}
Let $X$ be a compact complex manifold of complex dimension $\dim_\C X=n\geq 2$, and let $1\leq p\leq n-1$. Then:
\begin{enumerate}
    \item If $X$ is HS hyperbolic, then $X$ is Kobayashi hyperbolic.
    \item If $X$ is $p$-HS hyperbolic or $p$-SKT hyperbolic, then $X$ is $p$-cyclically hyperbolic $($or $p$-hyperbolic in the sense of Definition 2.5, \cite{kasuya2025partially}$)$.
\end{enumerate}
\end{thmD}
We also establish the following deformation results for $p$-HS hyperbolic manifolds and $p$-Kähler hyperbolic manifolds:
\begin{thmE}
Let $\{X_t\hspace{1pt}:\hspace{1pt}t\in\B_\varepsilon(0)\}$ be an analytic family of compact complex $n$-dimensional manifolds, where $\B_\varepsilon(0)\subset\C^N$ is the open ball of radius $\varepsilon>0$ centered at the origin, and fix $1\leq p\leq n-1$. Then:
\begin{enumerate}
    \item If $X_0$ is $p$-HS hyperbolic, then the fibers $X_t$ are $p$-HS hyperbolic for all $t$ sufficiently close to $0$.
    \item If $X_0$ is a $p$-Kähler hyperbolic $\partial\bar\partial$-manifold, then for sufficiently small $t$, the fibers $X_t$ are $p$-SKT hyperbolic.
\end{enumerate}
\end{thmE}

\section{Balanced hyperbolic manifolds}
Let $X$ be a compact connected complex manifold of complex dimension $\dim_\C X=n\geq 2$, and $\pi:\widetilde{X}\longrightarrow X$ its universal cover. Recall that a Hermitian metric $\omega$ on $X$ is \textbf{semi-Kähler} (or \textbf{balanced}, or \textbf{co-Kähler}) if $d\omega^{n-1}=0$ (\cite{gauduchon1977fibres},\cite{michelsohn1982existence}). 
\begin{dfn}[\textbf{Balanced hyperbolicity}, \cite{marouani2023balanced}] We say that a compact complex manifold $X$ is \textbf{balanced hyperbolic} if it admits a balanced metric $\omega$ such that $\omega^{n-1}$ is $\widetilde{d}($bounded$)$, i.e. there exists a $\widetilde{\omega}$-bounded $(2n-3)$-form $\alpha$ on $\widetilde{X}$ such that $\widetilde{\omega}^{n-1}:=\pi^*\omega^{n-1}=d\alpha$.
\end{dfn}
There are mainly four known classes of Balanced hyperbolic manifolds, namely:
\begin{enumerate}
    \item Kähler hyperbolic manifolds (thanks to Lemma 2.2 in \cite{marouani2023balanced}).
    \item \textbf{Degenerate balanced} manifolds, i.e. $n$-dimensional compact complex manifolds that carry a Hermitian metric $\omega$ such that $\omega^{n-1}$ is $d$-exact (see \cite{popovici2015aeppli}, Proposition 5.4). Obviously, this class of manifolds does not contain any Kähler ones (if a compact complex manifold $X$ admits a Kähler metric $\omega$, then $\{\omega^{n-1}\}\neq 0\in H^{2n-2}_{DR}(X,\R)$). Examples of such manifolds appear starting from complex dimension 3. Two classes of examples of degenerate balanced manifolds are known:
    \begin{enumerate}
        \item The connected sums $X=\#_k(\Sp^3\times\Sp^3)$ of $k\geq 2$ copies of $\Sp^3\times\Sp^3$ endowed with the Friedman-Lu-Tian complex structure constructed via \textit{conifold transitions} (see \cite{friedman1991threefolds}, \cite{lu1996complex} and \cite{fu2012balanced}).
        \item The \textbf{Yachou manifolds} $X=G/\Gamma$, where $G$ is a \textit{semi-simple} complex Lie group and $\Gamma\subset G$ a lattice (\cite{yachou1998varietes}).
    \end{enumerate}
    \item Products of (1) and (2) (thanks to Proposition 2.17, \cite{marouani2023balanced}), which need not be Kähler nor degenerate balanced.
    
    \item Via smooth modifications of the previous examples. More precisely: Take a smooth modification $f:\widehat{X}\longrightarrow X$ of compact complex manifolds,  with $\dim_\C X=n\geq 3$. Assume that either $\pi_i(\widehat{X})=0$ for $2\leq i\leq 2n-3$, or $f$ is a \textit{contraction to points}, i.e. $f$ restricts to a biholomorphism outside of the preimage of a countable set of points in $X$. Then the following holds:
    \begin{center}
        $\widehat{X}$ is balanced hyperbolic $\xLongrightarrow{\qquad}$ $X$ is balanced hyperbolic
    \end{center}
    This has been proved in \cite{fu2024birational} (see Theorem 1.3.(1) and Theorem 1.4). 
\end{enumerate}
\subsection{Modifications of Compact Degenerate Balanced Manifolds}
It is natural to ask whether the degenerate balanced property is preserved under smooth modifications. Toward answering this question, we first make the following observation:
\begin{obs} Let $X$ be a compact balanced hyperbolic manifold with $\dim_\C X=n$.
\begin{enumerate}
    \item If $X$ is degenerate balanced, then there are no complex hypersurfaces in $X$.
    \item If $X$ contains a complex hypersurface, then $\pi_1(X)$ is infinite
\end{enumerate} \label{97}
\end{obs}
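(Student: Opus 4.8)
The plan is to deduce both claims from Proposition~\ref{95} (the characterization of degenerate balanced manifolds by the absence of a non-zero $d$-closed positive $(1,1)$-current), combined with the simple remark that the defining $\widetilde{d}(\text{bounded})$ condition of balanced hyperbolicity collapses to plain $d$-exactness as soon as the universal cover happens to be compact.

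For part (1) I would argue directly. If $X$ were degenerate balanced and $Z\subset X$ a complex hypersurface, then the Lelong current of integration $[Z]$ is a non-zero $d$-closed positive $(1,1)$-current on $X$: it is $d$-closed because $Z$ is analytic, positive because $Z$ is a subvariety, and non-zero because $Z\neq\emptyset$. This directly contradicts Proposition~\ref{95}. Observe that balanced hyperbolicity plays no role in this implication; only degenerate balancedness is used.

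For part (2), suppose for contradiction that $\pi_1(X)$ is finite, so that $\pi:\widetilde{X}\to X$ is a finite-sheeted covering and $\widetilde{X}$ is itself a compact complex manifold. Let $\omega$ be a $\widetilde{d}(\text{bounded})$ balanced metric on $X$ and set $\widetilde\omega:=\pi^*\omega$. Since $\pi$ is a holomorphic local biholomorphism, $\widetilde\omega$ is again a balanced Hermitian metric, and by hypothesis $\widetilde\omega^{n-1}=d\alpha$ on $\widetilde{X}$. As $\widetilde{X}$ is compact, this says precisely that $\widetilde{X}$ is degenerate balanced, so by part (1) applied to $\widetilde{X}$ it contains no complex hypersurface. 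But $\widetilde{Z}:=\pi^{-1}(Z)$ is a non-empty compact complex hypersurface of $\widetilde{X}$ — it is analytic of pure codimension $1$ and non-empty since $\pi$ is surjective — a contradiction. Hence $\pi_1(X)$ is infinite. (Equivalently, and more hands-on: $\widetilde\omega^{n-1}=d\alpha$ forces $\{\widetilde\omega^{n-1}\}=0$ in $H^{2n-2}_{DR}(\widetilde{X},\R)$, whereas pairing this class with the homology class of $\widetilde{Z}$ gives $\int_{\widetilde{Z}}\widetilde\omega^{n-1}>0$ by Wirtinger's identity, a contradiction.)

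The argument is essentially routine; the points deserving a little care are the verification that $\widetilde{Z}=\pi^{-1}(Z)$ is genuinely a non-empty complex hypersurface of the compact manifold $\widetilde{X}$ (immediate, since $\pi$ is a surjective local biholomorphism) and, in the hands-on variant, the positivity of $\int_{\widetilde{Z}}\widetilde\omega^{n-1}$, which follows by restricting $\widetilde\omega^{n-1}/(n-1)!$ to the regular locus of $\widetilde{Z}$ and invoking Wirtinger. The only genuinely conceptual ingredient is the slogan underlying part (2): a compact cover of a balanced hyperbolic manifold is automatically degenerate balanced, and degenerate balanced manifolds carry no hypersurfaces, so the presence of a hypersurface is incompatible with $\pi_1(X)$ being finite.
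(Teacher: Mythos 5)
Your proof is correct, but it departs from the paper's at both steps, so a comparison is worth recording. For part (1) the paper argues directly with Stokes: if $\omega^{n-1}=d\alpha$ on $X$ itself, then $\mathrm{Vol}_\omega(H)=\frac{1}{(n-1)!}\int_H d\alpha=0$ for any compact complex hypersurface $H$, contradicting the Wirtinger positivity $\mathrm{Vol}_\omega(H)>0$; you instead invoke Proposition~\ref{95} together with the Lelong current of integration $[Z]$, a non-zero $d$-closed positive $(1,1)$-current. Both are valid and essentially dual to one another (Proposition~\ref{95} itself rests on the pairing between exact $(n-1,n-1)$-forms and closed positive $(1,1)$-currents), and your remark that balanced hyperbolicity plays no role in part (1) applies equally to the paper's version. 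For part (2) the difference is more substantial. You assume $\pi_1(X)$ finite, so that $\widetilde X$ is a compact finite cover on which the $\widetilde{d}(\text{bounded})$ hypothesis collapses to plain $d$-exactness, making $\widetilde X$ degenerate balanced; part (1) applied to $\widetilde X$ then forbids the non-empty hypersurface $\pi^{-1}(Z)$. This is clean and proves exactly the stated claim. The paper instead establishes the stronger assertion that the image of $\iota_*:\pi_1(H)\to\pi_1(X)$ is infinite for every hypersurface $H$: if that image were finite, a finite cover of $H$ would sit inside $\widetilde X$ as a compact hypersurface $\widetilde H$ with $\mathrm{Vol}_{\widetilde\omega}(\widetilde H)=k\,\mathrm{Vol}_\omega(H)>0$, while Stokes on the compact $\widetilde H$ forces this volume to vanish since $\widetilde\omega^{n-1}=d\alpha$. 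The paper's route thus extracts finer information, constraining $\pi_1(H)\to\pi_1(X)$ even when $\pi_1(X)$ is already infinite, whereas yours is shorter and reuses part (1); both arguments, as the paper notes afterwards, use only the exactness of $\widetilde\omega^{n-1}$ and not the boundedness of the primitive.
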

\begin{proof}
\begin{enumerate}
    \item Assume $X$ is degenerate balanced and $\omega$ is a degenerate balanced metric on $X$, that is $\omega^{n-1}=d\alpha$ for some smooth ($2n-3$)-form $\alpha$ on $X$. Assume there exists a complex hypersurface $H\subset X$, then:
    \begin{equation*}
        \text{Vol}_\omega(H)=\dfrac{1}{(n-1)!}\int_H\omega^{n-1}=\dfrac{1}{(n-1)!}\int_H d\alpha=\dfrac{1}{(n-1)!}\int_{\partial H}\alpha=0 \quad(\text{since }\partial H=\emptyset)
    \end{equation*}
    This is a contradiction, hence there are no complex hypersurfaces in $X$.
    \item Let $\omega$ be a balanced metric on $X$ such that $\widetilde{\omega}^{n-1}=d\alpha$ for some $\widetilde{\omega}$-bounded ($2n-3$)-form $\alpha$ on $\widetilde{X}$, and let $\iota: H \xhookrightarrow{\hspace{14pt}}X$ be a complex hypersurface in $X$. It suffices to show that the image of $\iota_*:\pi_1(H)\longrightarrow\pi_1(X)$ is infinite. We argue by contradiction. Suppose that the image of $\iota_*:\pi_1(H)\longrightarrow \pi_1(X)$ is finite. Passing to a finite cover, one then obtains a compact hypersurface $\widetilde{H}\subset\widetilde{X}$ such that the restriction $\pi_{\lvert \widetilde{H}}:\widetilde{H}\longrightarrow H$ is finite (let us say of degree $k>0$). This would imply that:
    \begin{equation*}
        \text{Vol}_{\widetilde{\omega}}(\widetilde{H})=\dfrac{1}{(n-1)!}\int_{\widetilde{H}}\widetilde{\omega}^{n-1}=\dfrac{k}{(n-1)!}\int_H\omega^{n-1}=k\text{Vol}_\omega(H)>0
    \end{equation*}
    On the other hand, we have:
    \begin{equation*}
        \text{Vol}_{\widetilde{\omega}}(\widetilde{H})=\dfrac{1}{(n-1)!}\int_{\widetilde{H}}\widetilde{\omega}^{n-1}=\dfrac{1}{(n-1)!}\int_{\widetilde{H}}d\alpha=\dfrac{1}{(n-1)!}\int_{\partial\widetilde{H}}\alpha=0
    \end{equation*}
    due to the compactness of $\widetilde{H}$, yielding a contradiction. Hence, $\pi_1(X)$ is infinite.
\end{enumerate}
\end{proof}
\begin{rmk}
\begin{enumerate}
    \item Note that Observation~\ref{97}.(2) does not require the boundedness of $\alpha$, it only uses the $d$-exactness of $\widetilde{\omega}^{n-1}$.
    \item We can now infer that the degenerate balanced property is not stable under smooth modifications. Indeed, let $X$ be a compact degenerate balanced manifold, and consider $\mu:\widehat{X}\longrightarrow X$ to be the blow-up of $X$ at a point $x\in X$. Then the exceptional divisor $E:=\mu^{-1}(x)$ is a complex hypersurface in $\widehat{X}$, hence $\widehat{X}$ is \textbf{not} degenerate balanced due to Observation~\ref{97}.(1). Furthermore, $\widehat{X}$ is not even balanced hyperbolic since $E\simeq\CP^{n-1}$ and $\CP^{n-1}$ cannot be embedded in $X$. More precisely, we have:
\begin{prop}[\cite{marouani2023balanced}, Proposition 2.10] Let $X$ be a compact balanced hyperbolic manifold. Then there is no holomorphic map $f:\CP^{n-1}\longrightarrow X$ such that $f$ is non-degenerate at some point $x\in\CP^1$.
\end{prop}
\end{enumerate}
\end{rmk}
It is natural to ask about the converse: if $f:\widehat{X}\longrightarrow X$ is a smooth modification of compact complex manifolds with $\widehat{X}$ is degenerate balanced, then is $X$ degenerate balanced ? 

When dealing with smooth modifications of compact degenerate balanced manifolds, we adopt the following characterization:
\begin{prop}[\cite{popovici2015aeppli}, Proposition 5.4] A compact complex manifold $X$ is degenerate balanced if and only if it admits no non-zero $d$-closed $(1,1)$-current $T\geq 0$.
\label{95}
\end{prop}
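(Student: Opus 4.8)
The natural route is a Hahn--Banach (Sullivan-type) duality argument carried out in the Fréchet space $\mathcal{E}^{n-1,n-1}_{\mathbb{R}}(X)$ of smooth real $(n-1,n-1)$-forms on the compact manifold $X$, whose topological dual is the space of real $(1,1)$-currents, combined with Michelsohn's fact (\cite{michelsohn1982existence}) that $\omega\mapsto\omega^{n-1}$ is a bijection from Hermitian metrics onto the open convex cone $\mathcal{C}$ of strictly positive $(n-1,n-1)$-forms. I prove the two implications separately.

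Easy direction (``$\Rightarrow$''): suppose $X$ is degenerate balanced, say $\omega^{n-1}=d\Gamma$ with $\Gamma$ a real $(2n-3)$-form, and let $T\geq 0$ be a $d$-closed $(1,1)$-current. Then $\langle T,\omega^{n-1}\rangle=\langle T,d\Gamma\rangle=\pm\langle dT,\Gamma\rangle=0$. On the other hand $\omega^{n-1}$ is strictly positive, so by positivity of $T$ the pairing $\langle T,\omega^{n-1}\rangle$ is a nonnegative Radon measure --- locally the trace of $T$ with respect to $\omega$ --- which vanishes only if $T=0$. Hence $T=0$, so no nonzero such current exists.

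Hard direction (``$\Leftarrow$''), in contrapositive form: assume $X$ is not degenerate balanced, i.e.\ $\mathcal{C}\cap\mathcal{W}=\varnothing$, where $\mathcal{W}\subset\mathcal{E}^{n-1,n-1}_{\mathbb{R}}(X)$ is the linear subspace of $(n-1,n-1)$-forms of the type $\partial\gamma+\bar\partial\bar\gamma$ with $\gamma\in\mathcal{E}^{n-2,n-1}(X)$ --- the bidegree-$(n-1,n-1)$ incarnation of $d$-exactness appropriate for $(n-1)$-st powers of metrics, since it is exactly the set of $(n-1,n-1)$-components of $d$ applied to real $(2n-3)$-forms. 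As $\mathcal{C}$ is a nonempty open convex cone and $\mathcal{W}$ a disjoint linear subspace, the geometric Hahn--Banach separation theorem yields a continuous linear functional, i.e.\ a real $(1,1)$-current $T\neq 0$, with $T\equiv 0$ on $\mathcal{W}$ and $T\geq 0$ on $\mathcal{C}$ (the subspace and cone structures force the separating constant to be $0$). By continuity (approximating weakly positive $(n-1,n-1)$-forms by elements of $\mathcal{C}$) $T$ is then a nonzero positive $(1,1)$-current. It remains to see $T$ is $d$-closed: $T$ being real, it suffices that $\partial T=0$, and for every $\gamma\in\mathcal{E}^{n-2,n-1}(X)$ the form $\partial\gamma+\bar\partial\bar\gamma$ lies in $\mathcal{W}$, so $\operatorname{Re}\langle T,\partial\gamma\rangle=0$; replacing $\gamma$ by $\mathrm{i}\gamma$ gives $\operatorname{Im}\langle T,\partial\gamma\rangle=0$ as well, hence $\langle T,\partial\gamma\rangle=0$ for all $\gamma$, that is $\partial T=0$ and so $dT=0$. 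This contradicts the hypothesis, completing the proof.

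The step I expect to be the main obstacle is precisely the last one, namely arranging that the separating current be genuinely $d$-closed rather than merely $\partial\bar\partial$-closed; this is exactly why one must choose the ``right'' separating subspace $\mathcal{W}$, so that the transpose of $d$ on it is $\partial\oplus\bar\partial$ on currents, and why $\mathcal{C}$ must be kept open (strict, not weak, positivity), since Michelsohn's $(n-1)$-st root is a Hermitian metric only for strictly positive forms. The remaining point to handle with care is the verification that ``$X$ degenerate balanced'' is faithfully encoded by $\mathcal{C}\cap\mathcal{W}\neq\varnothing$ --- equivalently, that from a strictly positive $(n-1,n-1)$-form of the shape $\partial\gamma+\bar\partial\bar\gamma$ one may always produce a Hermitian metric whose $(n-1)$-st power is honestly $d$-exact (in particular that such a metric is automatically balanced).
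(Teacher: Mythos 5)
The paper does not actually prove Proposition~\ref{95}: it is quoted from \cite{popovici2015aeppli}, Proposition 5.4, so there is no internal argument to measure your proposal against. Your strategy is nonetheless the right one --- the Sullivan-type Hahn--Banach duality is exactly how the cited result is obtained --- and most of it is sound: the easy implication via the trace measure is correct, and so is the computation that the annihilator of $\mathcal{W}=\{\partial\gamma+\bar\partial\bar\gamma\,:\,\gamma\in\mathscr{C}^\infty_{n-2,n-1}(X)\}$ inside the real $(1,1)$-currents is precisely the space of $d$-closed real $(1,1)$-currents (the $\gamma\mapsto \mathrm{i}\gamma$ trick is the correct way to pass from $\operatorname{Re}\langle T,\partial\gamma\rangle=0$ to $\partial T=0$).

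The genuine gap is the one you flag in your final sentence and then leave open. What the separation argument proves is: if no Hermitian metric $\omega$ satisfies $\omega^{n-1}\in\operatorname{Im}\partial+\operatorname{Im}\bar\partial$, then a non-zero $d$-closed positive $(1,1)$-current exists. This is unavoidable: the annihilator of the space of closed $(1,1)$-currents is $\operatorname{Im}\partial+\operatorname{Im}\bar\partial$, not the space of honestly $d$-exact $(n-1,n-1)$-forms, so the duality inescapably lands on this Aeppli-type condition. But the paper's definition of degenerate balanced requires $\omega^{n-1}$ to be $d$-exact as a $(2n-2)$-form. One inclusion is trivial (the $(n-1,n-1)$-component of $d\Gamma$ for real $\Gamma$ has the shape $\partial\gamma+\bar\partial\bar\gamma$), but the converse is not: a form $\partial\gamma+\bar\partial\bar\gamma$ need not even be $d$-closed (its differential is $\bar\partial\partial\gamma+\overline{\bar\partial\partial\gamma}$, and the natural primitive $\gamma+\bar\gamma$ produces parasitic components $\bar\partial\gamma+\partial\bar\gamma$ of types $(n-2,n)$ and $(n,n-2)$), so the metric produced by Hahn--Banach is a priori only Gauduchon --- $\partial\bar\partial\omega^{n-1}=0$ is automatic --- rather than balanced with $d$-exact $(n-1)$-st power. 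Upgrading ``$\omega^{n-1}\in\operatorname{Im}\partial+\operatorname{Im}\bar\partial$ for some metric'' to ``$(\omega')^{n-1}\in\operatorname{Im}d$ for some (possibly different) metric'' is precisely the non-trivial content that must still be imported from \cite{popovici2015aeppli}; as written, your argument establishes the current-theoretic characterisation of the vanishing of the Aeppli class of $\omega^{n-1}$, not, literally, of degenerate balancedness as defined in this paper.
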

The first part of Observation~\ref{97} is actually a direct consequence of Proposition~\ref{95}, since the existence of a complex hypersurface $H\subset X$ naturally gives rise to the \emph{current of integration} over $H$:
\begin{equation*}
     [H]:\mathscr{C}^\infty_{n-1,n-1}(X)\longrightarrow\C\quad,\qquad u\longmapsto\int_H u
\end{equation*}
This defines a non-zero $d$-closed positive ($1,1$)-current. 

Building on the ideas of \cite{popovici2013stability}, Theorem 2.2, we establish the following:
\begin{prop}
    Let $f:\widehat{X}\longrightarrow X$ be a smooth \textbf{surjective} modification of compact complex manifolds, with $\dim_\C X=n\geq 3$. If $\widehat{X}$ is degenerate balanced, then $X$ is again degenerate balanced. \label{96}
\end{prop}
\begin{proof} Let $E$ be the exceptional divisor of $f$ on $\widehat{X}$, and let $A\subset X$ be the analytic subset such that the restriction $f_{\lvert\widehat{X}\setminus E}:\widehat{X}\setminus E\longrightarrow X\setminus A$ is a biholomorphism. The proof proceeds by contradiction. If $X$ is not degenerate balanced, Proposition~\ref{95} yields the existence of a non-zero $d$-closed $(1,1)$-current $T\geq 0$. Then there exists an open cover $\{U_j\}_{j\in I}$ of $X$ such that $T_{\lvert_{U_j}}=i\partial\bar\partial\varphi_j$ for some plurisubharmonic function $\varphi_j:U_j\longrightarrow\R\cup\{-\infty\}$. In particular, $f^*T_{\lvert_{f^{-1}(u_j)}}=i\partial\bar\partial(\varphi_j\circ f)$, and these local pieces glue together into a globally defined $d$-closed ($1,1$)-current $f^*T\geq 0$ on $\widehat{X}$ thanks to the surjectivity of $f$ (see \cite{meo1996image} for further details). It remains to check that $f^*T$ is not the zero current on $\widehat{X}$ to conclude that $\widehat{X}$ is \textbf{not} degenerate balanced again thanks to Proposition~\ref{95}. Assume, for the sake of contradiction, that $f^*T=0$. This would mean that $\text{Supp\hspace{2pt}}T\subset A$. We distinguish the following two cases:

\textbf{Case 1.} If all the irreducible components $A_j$ of $A$ are such that $\text{codim}_\C A_j\geq 2$ in $X$, the support theorem [Corollary 2.11 in \cite{demailly1997complex}, Chapter III] forces $T=0$, which is a contradiction.

\textbf{Case 2.} If $A$ had certain global irreducible components $A_j$ with $\text{codim}_\C A_j=1$ in $X$, then $T=\underset{j}{\sum}\lambda_j[A_j]$ where $\lambda_j\geq 0$ and $[A_j]$ is the current of integration on $A_j$ thanks to another support theorem [Corollary 2.14 in \cite{demailly1997complex}, Chapter III]. So $f^*T=\underset{j}{\sum}\lambda_j[f^{-1}(A_j)]=0$ implies $\lambda_j=0$ ,$\forall j$, meaning that $T=0$ on $X$, yielding again a contradiction. Hence the desired result follows.
\end{proof}

\subsection{Deformations of Balanced hyperbolic manifolds}
The remainder of this section is devoted to the behavior of balanced hyperbolicity under small holomorphic deformations. Let ${X_t: t \in \B_\varepsilon(0)}$ be an analytic family of compact complex $n$-dimensional manifolds with $X_0$ balanced, where $\B_\varepsilon(0) \subset \C^N$ is the open ball of radius $\varepsilon>0$ centered at $0$. L. Alessandrini and G. Bassanelli observed that the balanced property is, in general, \textbf{not} preserved under small deformations; a classical counterexample is given by the small deformation of the Iwasawa manifold constructed by Nakamura (see \cite{nakamura1972complex}, \cite{alessandrini1990small}). A first extra condition ensuring stability is the $\partial\bar\partial$-property: C.-C. Wu proved in \cite{wu2006geometry} (Theorem 5.12) that satisfying the $\partial\bar\partial$-property is \textbf{open} under small deformations, and if $X_0$ is a balanced $\partial\bar\partial$-manifold, the nearby fibers $X_t$ are balanced $\partial\bar\partial$-manifolds. However, D. Popovici conjectured in \cite{popovici2017volume} that every compact $\partial\bar\partial$-manifold carries a balanced metric (Conjecture 6.1). Nevertheless, the $\partial\bar\partial$ assumption can be weakened while still preserving the deformation stability of the balanced property (see \cite{fu2011note}, \cite{ angella2017small}). One such condition will be discussed later in this section.

Let us consider the \textit{co}-\textit{polarized} case (This is a generalization of the polarization of a fiber by the De Rham class of a Kähler metric, which is possible thanks to Ehresmann’s Theorem \cite{ehresmann1947espaces}): Every holomorphic family of compact complex manifolds is locally $\mathscr{C}^\infty$ trivial. In particular, for every degree $k$ we have: $H^k_{DR}(X_t,\C)\simeq H^k_{DR}(X_0,\C)$). First, we recall the definition of \emph{Bott-Chern cohomology} and \emph{Aeppli cohomology}:
\begin{dfn} Let $X$ be a compact complex manifold with $\dim_\C X=n$.
\begin{enumerate}
    \item The \textbf{Bott-Chern cohomology group} of bidegree $(p,q)$ of $X$ is defined as follows:
    \begin{equation*}
        H_{BC}^{p,q}(X,\C)=\dfrac{\text{Ker}\left(\partial:\mathscr{C}^\infty_{p,q}(X)\longrightarrow\mathscr{C}^\infty_{p+1,q}(X)\right)\cap\text{Ker}\left(\bar\partial:\mathscr{C}^\infty_{p,q}(X)\longrightarrow\mathscr{C}^\infty_{p,q+1}(X)\right)}{\text{Im}\left(\partial\bar\partial:\mathscr{C}^\infty_{p-1,q-1}(X)\longrightarrow\mathscr{C}^\infty_{p,q}(X)\right)}
    \end{equation*}
    and the \textbf{Bott-Chern numbers} are $h^{p,q}_{BC}(X):=\dim_\C H^{p,q}_{BC}(X,\C)$.
    \item  The \textbf{Aeppli cohomology group} of bidegree $(p,q)$ of $X$ is defined as follows:
    \begin{equation*}
        H_A^{p,q}(X,\C)=\dfrac{\text{Ker}\left(\partial\bar\partial:\mathscr{C}^\infty_{p,q}(X)\longrightarrow\mathscr{C}^\infty_{p+1,q+1}(X)\right)}{\text{Im}\left(\partial:\mathscr{C}^\infty_{p-1,q}(X)\longrightarrow\mathscr{C}^\infty_{p,q}(X)\right)+\text{Im}\left(\bar\partial:\mathscr{C}^\infty_{p,q-1}(X)\longrightarrow\mathscr{C}^\infty_{p,q}(X)\right)}
    \end{equation*}
    and the \textbf{Aeppli numbers} are $h^{p,q}_A(X):=\dim_\C H^{p,q}_A(X,\C)$.
\end{enumerate}
\end{dfn}
We recall some of the key characterizations of $\partial\bar{\partial}$-manifolds, highlighting their good Hodge-theoretic properties (which is why they are sometimes referred to as \textbf{cohomologically Kähler}), following ideas from \cite{deligne1975real}, \cite{popovici2014deformation} and \cite{angella2013lemma} (see also the exposition in \cite{popovici2022non}, §1.3).
\begin{prop} Let $X$ be a compact complex manifold with $\dim_\C X=n$. The following statements are equivalent:
\begin{enumerate}
    \item $X$ is a $\partial\bar\partial$-manifold, i.e. $\forall p,q=0,\dots,n$, $\forall u\in\mathscr{C}_{p,q}^\infty(X)\cap\text{Ker }\partial\cap\text{Ker }\bar\partial$, we have:
    \begin{equation*}
        u\in\text{Im }d\Longleftrightarrow u\in\text{Im }\partial\Longleftrightarrow u\in\text{Im }\bar\partial\Longleftrightarrow u\in\text{Im }\partial\bar\partial
    \end{equation*}
    \item X have the \textbf{Hodge Decomposition property}, i.e. $\forall p,q=0,\dots,n$, every Dolbeault cohomology class $\{\alpha^{p,q}\}_{\bar\partial}\in H_{\bar\partial}^{p,q}(X,\C)$ can be represented by a $d$-closed $(p,q)$-form and for every $k=0,\dots,2n$, the linear map:
    \begin{equation*}
        \underset{p+q=k}{\bigoplus}H_{\bar\partial}^{p,q}(X,\C)\ni\underset{p+q=k}{\sum}\{\alpha^{p,q}\}_{\bar\partial}\longmapsto\left\{\underset{p+q=k}{\sum}\alpha^{p,q}\right\}\in H_{DR}^{p,q}(X,\C)
    \end{equation*}
    is \textbf{well-defined} by means of $d$-closed \textbf{pure-type} representatives $\alpha^{p,q}$ of their respective Dolbeault cohomology classes $($i.e. $\partial\alpha^{p,q}=\bar\partial\alpha^{p,q}=0)$ and \textbf{bijective}.
    \item $\forall p,q=0,\dots,n$, the canonical linear maps (induced by the identity):
    \begin{equation*}
        H_{BC}^{p,q}(X,\C)\longrightarrow H_{\bar\partial}^{p,q}(X,\C) \qquad \text{and}\qquad H_{\bar\partial}^{p,q}(X,\C)\longrightarrow H_A^{p,q}(X,\C)
    \end{equation*}
    are isomorphisms.
\end{enumerate} \label{94}
\end{prop}
\begin{dfn}[\cite{popovici2019holomorphic}, Definition 4.1] Let $\{X_t:t\in\B_\varepsilon(0)\}$ be an analytic family of compact $\partial\bar\partial$-manifolds of complex dimension $n$. Assume $X_0$ is balanced, and fix a balanced class $\{\omega_0^{n-1}\}_{BC}\in H_{BC}^{n-1,n-1}(X_0,\C)\subset H^{2n-2}_{DR}(X_0,\C)$. We say that a fiber $X_t$ is \textbf{co-polarized} by the De Rham class $\{\omega_0^{n-1}\}_{BC}$ if it is of type $(n-1,n-1)$ for the complex structure on $X_t$.
\end{dfn}
The deformation stability of balanced hyperbolicity on co-polarized fibers can be deduced from the following results:
\begin{thm}[\cite{popovici2019holomorphic}, Observation 7.2] Let $(X_0,\omega_0)$ be a balanced $\partial\bar\partial$-manifold. For $\varepsilon>0$ sufficiently small, the De Rham class $\{\omega_0^{n-1}\}_{BC}\in H^{2n-2}_{DR}(X_0,\C)$ defines a balanced class for the complex structure on the fibers $X_t$ that are co-polarized by it.
\end{thm}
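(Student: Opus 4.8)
The plan is to push everything onto a single smooth manifold by means of the $\mathscr{C}^\infty$-triviality of the family, and then to read off the conclusion from the openness of strict positivity together with Michelsohn's $(n-1)$-st root theorem \cite{michelsohn1982existence}, with the co-polarization hypothesis supplying exactly the ingredient that makes it go through. First I would invoke Ehresmann's theorem: after shrinking $\varepsilon$, the family $\{X_t:t\in\B_\varepsilon(0)\}$ is $\mathscr{C}^\infty$-trivial, so one fixes a diffeomorphism identifying the underlying smooth manifold of each $X_t$ with that of $X_0$. Under this identification the complex structures $J_t$ vary smoothly with $t$, with $J_0$ the structure of $X_0$, and $\omega_0^{n-1}$ becomes a fixed real $(2n-2)$-form on the common smooth manifold; being transported from a $d$-closed form it is $d$-closed for every $J_t$, and it represents the De Rham class $\{\omega_0^{n-1}\}\in H^{2n-2}_{DR}(X_t,\C)$ for every $t$ via the induced isomorphism in cohomology. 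For a co-polarized fiber $X_t$ the hypothesis says precisely that this class, equivalently its canonical transported representative $\omega_0^{n-1}$, is of pure type $(n-1,n-1)$ for $J_t$.

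The core step is then as follows. With respect to $J_0$, the form $\omega_0^{n-1}$ is the $(n-1)$-st power of the positive $(1,1)$-form $\omega_0$, hence a strictly positive $(n-1,n-1)$-form for $J_0$; strict positivity of an $(n-1,n-1)$-form of pure type $(n-1,n-1)$ is an open condition, and $J_t\to J_0$ in $\mathscr{C}^\infty$ as $t\to 0$, so after shrinking $\varepsilon$ once more one obtains, for every co-polarized $X_t$ with $t\in\B_\varepsilon(0)$, that $\omega_0^{n-1}$ is a strictly positive $(n-1,n-1)$-form also for $J_t$. Co-polarization is indispensable here: without it $\omega_0^{n-1}$ would still carry nonzero $(n-2,n)$ and $(n,n-2)$ components for $J_t$, so it could not be a positive form of type $(n-1,n-1)$, and only its $(n-1,n-1)_{J_t}$-component would be positive. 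Now $\omega_0^{n-1}$ is a strictly positive, $d$-closed $(n-1,n-1)$-form on $X_t$, and by Michelsohn's theorem the map $\omega\mapsto\omega^{n-1}$ is a bijection between positive $(1,1)$-forms and strictly positive $(n-1,n-1)$-forms, so there is a unique Hermitian metric $\omega_t$ on $X_t$ with $\omega_t^{n-1}=\omega_0^{n-1}$; since $d\omega_t^{n-1}=d\omega_0^{n-1}=0$, the metric $\omega_t$ is balanced. Hence $\{\omega_0^{n-1}\}$ contains $\omega_t^{n-1}$ for a balanced metric on $X_t$, i.e. it is a balanced class, which is the assertion.

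The delicate point, and where the real work would have to be done if one weakened the reading of the hypothesis, is the interpretation of ``of type $(n-1,n-1)$''. The argument above is immediate when it means that the transported representative $\omega_0^{n-1}$ is itself of pure type $(n-1,n-1)$ for $J_t$. If instead it only means that the De Rham class contains some $d$-closed $(n-1,n-1)_{J_t}$-form, one then has, on the one hand, a $d$-closed pure-type representative $\Theta_t$ (coming, say, from a Bott--Chern lift) which need not be positive, and, on the other hand, the $(n-1,n-1)_{J_t}$-component of the transported $\omega_0^{n-1}$, which is strictly positive for $t$ small but need not be $d$-closed. One would then have to reconcile these into a single representative that is simultaneously strictly positive and $d$-closed: check first that $\Theta_t$ and that positive component define the same class in the Aeppli cohomology $H^{n-1,n-1}_A(X_t,\R)$, and then run a perturbation/openness argument for the balanced cone centred at $t=0$. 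Matching strict positivity with $d$-closedness inside one representative is the main obstacle; under the literal reading of co-polarization used above it does not arise.
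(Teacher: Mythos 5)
The paper does not prove this statement; it is quoted verbatim from \cite{popovici2019holomorphic} (Observation 7.2), so there is no internal proof to compare against, and your attempt has to be judged on its own terms. There is a genuine gap, and you have in fact located it yourself in your final paragraph. The co-polarization hypothesis, both in this paper's definition and in \cite{popovici2019holomorphic}, Definition 4.1, is a condition on the \emph{De Rham class}: $\{\omega_0^{n-1}\}$ is of type $(n-1,n-1)$ for $J_t$, meaning it admits \emph{some} $d$-closed representative of pure type $(n-1,n-1)_{J_t}$. It is not the assertion that the transported form $\omega_0^{n-1}$ is itself of pure type for $J_t$; for $t\neq 0$ that would be an accident, and under that reading the statement is essentially trivial, as your own second paragraph shows. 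The entire content of the observation is the case you defer to the end: one has, on one side, the $(n-1,n-1)_{J_t}$-component of $\omega_0^{n-1}$, which is transverse for small $t$ by continuity but not $d$-closed, and, on the other side, a $d$-closed pure-type representative of the class, which need not be positive; the two differ by a term of the form $\partial_t(\cdot)+\bar\partial_t(\cdot)$, and positivity is not preserved under adding an arbitrary such correction.

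Your sketch for closing this (compare Aeppli classes, then perturb from $t=0$) points in the right direction but is not carried out, and the missing step is precisely the analytic heart of the matter: one must choose the correction, equivalently the pure-type representative, so that it depends continuously on $t$ and vanishes at $t=0$ --- for instance as a minimal $L^2$-norm solution obtained via the Green operator of the Bott-Chern Laplacian, exactly as in the proof of Theorem~\ref{71} of this paper --- so that positivity of the $(n-1,n-1)_{J_t}$-component transfers to the $d$-closed pure-type representative for $t$ small. The remaining ingredients of your write-up (Ehresmann trivialization, openness of transversality of the $(n-1,n-1)_{J_t}$-component, Michelsohn's $(n-1)$-st root trick, and the passage of $d$-closedness to the root) are correct and are indeed how the endgame goes once a positive, $d$-closed, pure-type representative is in hand.
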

\begin{lem}[\cite{chen2018compact}, Lemma 2.4] 
The $\widetilde{d}$-boundedness property is a \textbf{cohomological} property, i.e. if a closed $k$-form $\alpha$  on a compact Riemannian manifold $X$ is $\widetilde{d}($bounded$)$, every $k$-form $\beta\in\{\alpha\}\in H_{DR}^k(X,\R)$ is $\widetilde{d}($bounded$)$.
\end{lem}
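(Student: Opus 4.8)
The plan is to reduce the statement to the elementary fact that, once the universal cover $\widetilde{X}$ is equipped with the pulled-back metric $\widetilde{g}:=\pi^{*}g$ (where $g$ denotes the fixed Riemannian metric on $X$), the covering projection $\pi:\widetilde{X}\longrightarrow X$ is a local isometry; consequently the pullback $\pi^{*}\gamma$ of any smooth form $\gamma$ on the \emph{compact} manifold $X$ is automatically bounded on $\widetilde{X}$, with the same bound as the pointwise norm of $\gamma$ on $X$.

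Concretely I would argue as follows. Let $\beta$ be a closed $k$-form with $\{\beta\}=\{\alpha\}$ in $H^{k}_{DR}(X,\R)$, so that $\beta=\alpha+d\gamma$ for some $\gamma\in\mathscr{C}^{\infty}_{k-1}(X)$. Pulling back along $\pi$ and using that $\pi^{*}$ commutes with $d$, we get $\widetilde{\beta}:=\pi^{*}\beta=\pi^{*}\alpha+d(\pi^{*}\gamma)=\widetilde{\alpha}+d(\pi^{*}\gamma)$. Since $\alpha$ is $\widetilde{d}($bounded$)$, by definition there is a bounded $(k-1)$-form $\eta$ on $\widetilde{X}$ with $\widetilde{\alpha}=d\eta$, hence $\widetilde{\beta}=d(\eta+\pi^{*}\gamma)$. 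It remains to see that $\eta+\pi^{*}\gamma$ is bounded. The pointwise norm $x\mapsto|\gamma|_{g}(x)$ is continuous on the compact space $X$, hence bounded by some constant $M$; since $\pi$ is a local isometry, $|\pi^{*}\gamma|_{\widetilde{g}}(\widetilde{x})=|\gamma|_{g}(\pi(\widetilde{x}))\leq M$ for every $\widetilde{x}\in\widetilde{X}$, so $\pi^{*}\gamma$ is bounded. A sum of bounded forms is bounded, so $\eta+\pi^{*}\gamma$ is bounded, and $\widetilde{\beta}=d(\eta+\pi^{*}\gamma)$ exhibits $\beta$ as $\widetilde{d}($bounded$)$.

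I do not expect any genuine obstacle. The only point requiring care is notational/conceptual: one must keep in mind that ``bounded'' is always measured with respect to the lifted metric $\widetilde{g}$, which is exactly what upgrades the naive pullback inequality to the equality of pointwise norms used above, and one must recall that $\gamma$, being a smooth form on a compact manifold, has globally bounded pointwise norm there. No analysis on the (possibly non-compact) cover $\widetilde{X}$ is needed beyond this. Since the relation ``$\{\beta\}=\{\alpha\}$'' is symmetric, the argument shows that $\widetilde{d}$-boundedness depends only on the De Rham class, which is the assertion of the lemma.
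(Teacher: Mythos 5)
Your proof is correct and is essentially the standard argument (the one in Chen--Yang, Lemma 2.4, which the paper simply cites without reproducing): write $\beta=\alpha+d\gamma$, lift, and use that $\pi$ is a local isometry together with the compactness of $X$ to bound $\pi^{*}\gamma$. The same "pullbacks from the compact base are $L^{\infty}$-bounded on the cover" step is used repeatedly elsewhere in the paper (e.g.\ in the proof of Theorem~\ref{79}), so your argument is exactly the intended one.
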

Therefore, we arrive at the following conclusion:
\begin{cor} Let $\{X_t:t\in\B_\varepsilon(0)\}$ be an analytic family of compact $\partial\bar\partial$-manifolds with $\dim_\C X_t=n$, and assume $(X_0,\omega_0)$ balanced hyperbolic. Then the fibers $X_t$ co-polarized by $\{\omega_0^{n-1}\}\in H^{2n-2}_{DR}(X_0,\C)$ are also balanced hyperbolic $($for $t$ sufficiently close to $0)$.
\end{cor}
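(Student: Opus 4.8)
The plan is to concatenate the two results recalled just above — Popovici's Observation 7.2 and the Chen--Yang Lemma 2.4 — after first arranging matters so that $\widetilde d$-boundedness can be read off purely at the level of de Rham cohomology of a fixed smooth manifold. First I would use local $\mathscr{C}^\infty$-triviality of the family: after shrinking $\varepsilon$, all fibers $X_t$ share a single underlying compact differentiable manifold $M$, differing only through the integrable complex structure $J_t$. Hence they share one universal cover $\pi:\widetilde M\longrightarrow M$ and one de Rham cohomology $H^\bullet_{DR}(M,\R)$, so that the co-polarizing class $\{\omega_0^{n-1}\}\in H^{2n-2}_{DR}(M,\R)$ makes sense simultaneously on every $X_t$. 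I would also record the standard fact that $\widetilde d$-boundedness of a closed form on $M$ is independent of the auxiliary Riemannian metric used to measure boundedness: any two metrics on the compact $M$ are bi-Lipschitz equivalent, and this comparison pulls back to $\widetilde M$ because $\pi$ is a local isometry for the pulled-back metrics; thus ``$\alpha$ is $\widetilde d$(bounded)'' is genuinely a property of the class $\{\alpha\}\in H^{2n-2}_{DR}(M,\R)$.

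Next, since $(X_0,\omega_0)$ is balanced hyperbolic, by definition $\omega_0^{n-1}$ is a closed, $\widetilde d$(bounded) $(2n-2)$-form on $M$, i.e. $\widetilde{\omega_0}^{\,n-1}=d\alpha$ with $\alpha$ an $\widetilde{\omega_0}$-bounded $(2n-3)$-form on $\widetilde M$. Applying the Chen--Yang Lemma 2.4 to the fixed Riemannian manifold $M$, every representative of the class $\{\omega_0^{n-1}\}\in H^{2n-2}_{DR}(M,\R)$ is again $\widetilde d$(bounded); in other words, the whole co-polarizing class is a ``$\widetilde d$-bounded class''.

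Finally, I would invoke the co-polarized stability statement. For $\varepsilon$ small and any fiber $X_t$ co-polarized by $\{\omega_0^{n-1}\}$, Popovici's Observation 7.2 asserts that $\{\omega_0^{n-1}\}$ is a \emph{balanced} class for the complex structure $J_t$, i.e. there is a balanced metric $\omega_t$ on $X_t$ with $\{\omega_t^{n-1}\}=\{\omega_0^{n-1}\}$ in $H^{2n-2}_{DR}(M,\R)$. Since $\omega_t^{n-1}\in\{\omega_0^{n-1}\}$ and the latter class is $\widetilde d$-bounded by the previous step, $\omega_t$ is a $\widetilde d$(bounded) balanced metric on $X_t$; hence $X_t$ is balanced hyperbolic, which is exactly the claimed conclusion.

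The only genuinely delicate point — and the step I expect to require the most care — is the metric-independence of $\widetilde d$-boundedness across the family: the fibers $X_t$ carry varying natural metrics $\omega_t$, whereas the Chen--Yang lemma is phrased for one fixed Riemannian structure, so one must check that bi-Lipschitz comparability on the compact $M$ lifts to $\widetilde M$ (so that boundedness with respect to $\widetilde{\omega_0}$ and with respect to $\widetilde{\omega_t}$ agree). Once that is in place, the proof is a direct combination of the quoted results; one should merely take $\varepsilon$ small enough to satisfy simultaneously the hypotheses of Observation 7.2 and of $\mathscr{C}^\infty$-triviality.
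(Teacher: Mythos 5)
Your proposal is correct and follows exactly the route the paper intends: the corollary is stated as an immediate consequence of Observation 7.2 (the class $\{\omega_0^{n-1}\}$ remains balanced on the co-polarized fibers) combined with the Chen--Yang lemma ($\widetilde d$-boundedness is a property of the de Rham class), read on the single underlying smooth manifold provided by Ehresmann's theorem. Your extra remark on the metric-independence of $\widetilde d$-boundedness (bi-Lipschitz comparability on the compact base lifting to the universal cover) is a legitimate point of care that the paper leaves implicit, and it is handled correctly.
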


To establish a more general deformation result for balanced hyperbolic manifolds, we recall the following variants of the $\partial\bar\partial$-property in bidegree ($p,q$) (with $p,q\in\{0,\dots,n\}$) following Definition 5 in \cite{fu2011note}, Definition 2.1 in \cite{angella2017small}, Definition 3.1 and Theorem 1.1 in \cite{rao2019local}.
\begin{dfn} Let $X$ be a compact complex manifold with $\dim_\C X=n$, and let $p,q\in\{0,\dots,n\}$. $X$ is said to satisfy the :
\begin{enumerate}
    \item \textbf{weak $($p,q$)$-$\partial\bar\partial$-property} if for any form $\alpha\in\mathscr{C}^\infty_{p,q}(X)$ we have:
    \begin{equation*}
       \alpha\in\text{Im}\hspace{2pt}\partial\cap\text{Im}\hspace{2pt}\bar\partial \xLongrightarrow{\qquad} \bar\partial\alpha\in\text{Im}\hspace{2pt}\partial\bar\partial \vspace{-1ex}
    \end{equation*}
    \item \textbf{mild $($p,q$)$-$\partial\bar\partial$-property} if for any form $\alpha\in\mathscr{C}^\infty_{p,q}(X)$ we have:
    \begin{equation*}
        \alpha\in\text{Ker}\hspace{2pt}\bar\partial\cap\text{Im}\hspace{2pt}\partial \xLongrightarrow{\qquad} \alpha\in\text{Im}\hspace{2pt}\partial\bar\partial \vspace{-1ex}
    \end{equation*}
    \item \textbf{strong $($p,q$)$-$\partial\bar\partial$-property} if for any form $\alpha\in\mathscr{C}^\infty_{p,q}(X)$ we have:
    \begin{equation*}
        \alpha\in\text{Ker}d\cap(\text{Im}\hspace{2pt}\partial+\text{Im}\hspace{2pt}\bar\partial) \xLongrightarrow{\qquad} \alpha\in\text{Im}\hspace{2pt}\partial\bar\partial \vspace{-1ex}
    \end{equation*}
\end{enumerate} \label{90}
\end{dfn}
\begin{rmk}
\begin{enumerate}
    \item For any bidegree ($p,q$), we have : 
    \begin{align*}
        \text{The strong} (p,q)\text{-}\partial\bar\partial\text{-property}&\xLongrightarrow{\qquad}\text{The mild} (p,q)\text{-}\partial\bar\partial\text{-property} \\ &\xLongrightarrow{\qquad}\text{The weak} (p,q)\text{-}\partial\bar\partial\text{-property}
    \end{align*}
    \item While the \emph{strong} ($p,q$)-$\partial\bar\partial$-property are stable under small deformations of the complex structure (Proposition 4.8, \cite{angella2017small}), the \emph{weak} ($p,q$)-$\partial\bar\partial$-property and the \emph{mild} ($p,q$)-$\partial\bar\partial$-property are \textbf{not} open under deformations (see \cite{ugarte2015balanced}, Example 3.7 for the case ($n-1,n$)).
\end{enumerate}
\end{rmk}
If the weak ($n-1,n$)-$\partial\bar\partial$-property holds on the fibers $X_t$, for $t$ sufficiently close to $0$, then the balanced property is open under small deformations (\cite{fu2011note}, Theorem 6). Hence, a first result in the desired direction is the following:
\begin{thm}
Let $\{X_t:t\in\B_\varepsilon(0)\}$ be an analytic family of compact complex $n$-dimensional manifolds such that $X_0$ is balanced. Assume that the fibers $X_t$ satisfy the weak $(n-1,n)$-$\partial\bar\partial$-property for $t\neq 0$.
\begin{enumerate}
    \item If $X_0$ is balanced hyperbolic, the fibers $X_t$ carry balanced metrics $\omega_t$ that are \textbf{Gauduchon hyperbolic} for small $t$.
    \item If moreover, $X_0$ is degenerate balanced and the fibers $X_t$ satisy the mild $(n,n-2)$-$\partial\bar\partial$-property for $t\neq 0$, the fibers $X_t$ are degenerate balanced for small $t$.
\end{enumerate}  \label{71}
\end{thm}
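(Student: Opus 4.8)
The plan is to take different routes for the two parts. For part (1) I would transport the balanced–hyperbolic data of $X_0$ to the nearby fibres and repair the loss of closedness using the weak $\partial\bar\partial$-Lemma, re-running the proof of \cite{fu2011note}, Theorem 6 while keeping track of $\widetilde d$-boundedness; for part (2) I would argue by contradiction through the positive-current criterion of Proposition~\ref{95}, in the spirit of \cite{popovici2013stability}, Theorem 2.2. Concretely, for part (1) fix a $\widetilde d$(bounded) balanced metric $\omega_0$ on $X_0$, so $\pi_0^*\omega_0^{n-1}=d\alpha_0$ with $\alpha_0$ bounded for $\widetilde\omega_0=\pi_0^*\omega_0$; let $\Phi_t\colon X_0\overset{\sim}{\longrightarrow}X_t$ be the Ehresmann $\mathscr C^\infty$-trivialization, with lift $\widetilde\Phi_t\colon\widetilde{X_0}\to\widetilde{X_t}$, and set $\Omega_t:=(\Phi_t^{-1})^*\omega_0^{n-1}$, a real $d$-closed $(2n-2)$-form on $X_t$ converging to $\omega_0^{n-1}$ in $\mathscr C^\infty$. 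Expanding $d\Omega_t=0$ by bidegree gives $\bar\partial\,\Omega_t^{n-1,n-1}=-\partial\,\overline{\Omega_t^{n,n-2}}\in\operatorname{Im}\partial$, so for $t\neq 0$ the $(n-1,n)$-th weak $\partial\bar\partial$-Lemma furnishes $\gamma_t\in\mathscr C^\infty_{n-2,n-1}(X_t)$ with $\bar\partial\,\Omega_t^{n-1,n-1}=i\partial\bar\partial\gamma_t$ (and $\gamma_0=0$, since $\omega_0$ is balanced). Putting $\mu_t:=i\,\overline{\gamma_t}$ and $\Theta_t:=\Omega_t-d(\mu_t+\overline{\mu_t})$, one checks, using the relation defining $\gamma_t$, that $\Theta_t^{n-1,n-1}=\Omega_t^{n-1,n-1}-\bar\partial\mu_t-\partial\overline{\mu_t}$ is real and $d$-closed; and — quoting that the construction in the proof of \cite{fu2011note}, Theorem 6 can be carried out with $\gamma_t$ continuous in $t$ — that it is $\mathscr C^\infty$-close to $\omega_0^{n-1}$, hence positive for small $t$. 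By Michelsohn's correspondence \cite{michelsohn1982existence}, $\Theta_t^{n-1,n-1}=\omega_t^{n-1}$ for a balanced metric $\omega_t$ on $X_t$.

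To finish part (1) it remains to see that $\Theta_t$ is $\widetilde d$(bounded). From $\Phi_t^{-1}\circ\pi_t=\pi_0\circ\widetilde\Phi_t^{-1}$ one gets $\pi_t^*\Theta_t=d\big((\widetilde\Phi_t^{-1})^*\alpha_0-\widetilde{\mu_t}-\overline{\widetilde{\mu_t}}\big)$ with $\widetilde{\mu_t}:=\pi_t^*\mu_t$; here $(\widetilde\Phi_t^{-1})^*\alpha_0$ is bounded for $\pi_t^*\big((\Phi_t^{-1})^*\omega_0\big)$, which is uniformly equivalent to $\widetilde\omega_t:=\pi_t^*\omega_t$ (two metrics on the compact manifold $X_t$), while $\widetilde{\mu_t}$ is $\widetilde\omega_t$-bounded as the $\pi_t$-pullback of a smooth form from $X_t$. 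Hence the primitive above is $\widetilde\omega_t$-bounded, so $\omega_t$ is a balanced metric whose $(n-1)$-st power is the $(n-1,n-1)$-component of a real, $d$-closed, $\widetilde d$(bounded) $(2n-2)$-form — exactly the condition for $X_t$ to be Gauduchon hyperbolic in the sense of \cite{marouani2023skt}, with balanced witnessing metric $\omega_t$, as claimed.

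For part (2) I would not push the above construction through (it would require an \emph{exact}, not merely closed, primitive of $\Theta_t^{n-1,n-1}$), but argue by contradiction with Proposition~\ref{95}. If $X_{t_k}$ were not degenerate balanced for some $t_k\to 0$, each would carry a non-zero $d$-closed positive $(1,1)$-current $T_k$; transport these to the fixed underlying smooth manifold via $\Phi_{t_k}$ and normalize the mass of $T_k$ with respect to the transported metric $\omega_{t_k}$ to be $1$. By weak-$\ast$ compactness of sets of positive currents of bounded mass, a subsequence converges to a current $T_0$ on $X_0$ that is $d$-closed (limit of $d$-closed currents), of type $(1,1)$ for $J_0$ and positive (as $J_{t_k}\to J_0$ with continuously varying positivity cones), and of mass $1$ for $\omega_0$ by continuity of mass under weak-$\ast$ limits, using $\omega_{t_k}^{n-1}\to\omega_0^{n-1}$ in $\mathscr C^\infty$; in particular $T_0\neq 0$. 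This contradicts Proposition~\ref{95} for $X_0$, so $X_t$ is degenerate balanced for all small $t$ — an argument that, incidentally, does not invoke the weak $\partial\bar\partial$-Lemma.

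The step I expect to be the main obstacle is the one carried over from \cite{fu2011note}, Theorem 6: the weak $\partial\bar\partial$-Lemma of Definition~\ref{90} comes with no estimate, so solving $\bar\partial\,\Omega_t^{n-1,n-1}=i\partial\bar\partial\gamma_t$ with $\gamma_t$ depending continuously on $t$ and vanishing at $t=0$ — which is what keeps $\Theta_t^{n-1,n-1}$ positive — must be obtained from a Hodge-theoretic solution with uniform control, or imported verbatim from the proof of \cite{fu2011note}, Theorem 6. For part (2) the only delicate point is that the normalized mass cannot wholly escape in the limit, i.e.\ $T_0\neq 0$; this follows from compactness of the total space together with the smooth variation of the reference metrics.
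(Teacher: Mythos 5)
Your treatment of part (1) is essentially the paper's own argument: decompose the fixed $d$-closed $(2n-2)$-form $\omega_0^{n-1}$ with respect to the complex structure of $X_t$, invoke the $(n-1,n)$-th weak $\partial\bar\partial$-Lemma to correct the $(n-1,n-1)$-component into a $d$-closed positive form, apply Michelsohn's trick, then lift to the universal cover and read off the $(n-1,n-1)$-component of the bounded primitive to get the $(\widetilde{\partial+\bar\partial})$(bounded) property of $\omega_t^{n-1}$. The delicate point you flag — choosing the correction $\gamma_t$ continuously in $t$ with $\gamma_0=0$ — is resolved in the paper exactly as you anticipate: one takes the minimal $L^2_{h_t}$-norm solution of the $\partial\bar\partial$-equation, which depends smoothly on $t$ by Theorem 5 of \cite{kodaira1960deformations}. (One small point of phrasing: the condition you land on — $\omega_t^{n-1}$ is the $(n-1,n-1)$-component of a real, $d$-closed, $\widetilde{d}$(bounded) form — is literally the sG-hyperbolicity condition of \cite{ma2024strongly}; Gauduchon hyperbolicity in the sense of Definition~\ref{99} follows by extracting the pure-type components of the bounded primitive, which you do implicitly. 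Since your $\omega_t$ is balanced, the stated conclusion holds.)

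Part (2) is where you diverge from the paper, and your argument has a genuine gap. Observe first that your weak-$*$ compactness argument makes no use whatsoever of the weak $\partial\bar\partial$-Lemma hypothesis (as you yourself note); if it were correct, degenerate balancedness would be unconditionally open under holomorphic deformations, which would render Theorem~\ref{73}.(2) (where only the \emph{degenerate sG} property is obtained on the nearby fibers), Corollary 2.25, and the hypothesis of Theorem~\ref{71}.(2) all superfluous. The flaw lies in the passage through Proposition~\ref{95}. The precise Hahn--Banach obstruction to the existence of a positive-definite $(n-1,n-1)$-form lying in $\mathrm{Im}\,d$ (as opposed to one lying in $\mathrm{Im}\,\partial+\mathrm{Im}\,\bar\partial$) is a non-zero positive $(1,1)$-current $T$ annihilating only the \emph{pure-type} exact $(n-1,n-1)$-forms, i.e.\ satisfying $\partial T\in\overline{\mathrm{Im}\,\bar\partial}$ rather than $\partial T=\bar\partial T=0$. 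For this larger class of obstructing currents your compactness argument breaks down: the potentials $S_k$ solving $\partial T_k=\bar\partial S_k$ carry no mass bound, so the condition $\partial T\in\overline{\mathrm{Im}\,\bar\partial}$ does not pass to the weak-$*$ limit. What your argument genuinely establishes is that the absence of non-zero $d$-closed positive $(1,1)$-currents is open in $t$ — that is, the openness of the degenerate sG property, which is Theorem 2.11 of \cite{ma2024strongly} and gives back Theorem~\ref{73}.(2), not Theorem~\ref{71}.(2). Upgrading "degenerate sG" to "degenerate balanced" on the nearby fibers is precisely what the weak $\partial\bar\partial$-Lemma is for, and the paper does this constructively: it decomposes $\omega_0^{n-1}=d\alpha$ with respect to $J_t$, uses the weak $\partial\bar\partial$-Lemma to write $\Omega_t=v_t+w_t$ with $v_t\in\mathrm{Ker}\,\bar\partial_t\cap\mathrm{Im}\,\partial_t$ and $w_t\in\mathrm{Ker}\,\partial_t\cap\mathrm{Im}\,\bar\partial_t$, and then solves the coupled systems $\partial_t\beta_t=v_t,\ \bar\partial_t\beta_t=0$ and $\bar\partial_t\gamma_t=w_t,\ \partial_t\gamma_t=0$ via the Bott--Chern Green operator formulas of Lemmas~\ref{68} and~\ref{70}, producing an explicit smooth primitive $\omega_t^{n-1}=d(\beta_t+\gamma_t)$. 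You would need to replace your compactness argument by something of this constructive type (or prove that the relevant class of obstructing currents is weakly closed, which is exactly the hard point) for part (2) to go through.
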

Here, \emph{Gauduchon hyperbolicity} is understood in the sense of Definition 3.8 of \cite{marouani2023skt} (its precise definition will be recalled in the next section; Definition~\ref{99}).
\begin{proof} Let us first revisit the beginning of the proof of Theorem 6 in \cite{fu2011note}.
 Let $X$ be the underlying differentiable manifold of the given analytic family, and let $\omega$ be a balanced metric on $X_0$, and let $\{h_t\}_{t\in\B_\varepsilon(0)}$ be a $\mathscr{C}^\infty$-family of Hermitian metrics on $(X_t)_{t\in\B_\varepsilon(0)}$ with $h_0=\omega$. We decompose $\Omega:=\omega^{n-1}$ with respect to the complex structure on $X_t$:
\begin{equation}
    \Omega=\Omega_t^{n,n-2}+\Omega_t^{n-1,n-1}+\Omega_t^{n-2,n}\hspace{2pt},\qquad t\in\B_\varepsilon(0) \label{69}
\end{equation}
then $\Omega_t^{n-1,n-1}>0$ for $t$ close enough to 0 by continuity (since $\Omega_0^{n-1,n-1}=\omega^{n-1}>0$). Since $\Omega$ is $d$-closed, we get: $\bar\partial_t\Omega_t^{n-1,n-1}+\partial_t\Omega_t^{n-2,n}=0$, meaning that $\bar\partial_t\Omega_t^{n-1,n-1}\in\mathscr{C}^\infty_{n-1,n}(X_t)$ lies in $\text{Im}\hspace{2pt}\partial_t\cap\text{Im}\hspace{2pt}\bar\partial_t$. Applying the weak ($n-1,n$)-$\partial\bar\partial$-property yields: $\bar\partial_t\Omega_t^{n-1,n-1}=i\partial_t\bar\partial_tu_t$ for some $u_t\in\mathscr{C}^\infty_{n-2,n-1}(X_t)\cap(\text{Ker}(\partial_t\bar\partial_t))^\perp$. Then:
\begin{equation}
    \Omega_t:=\Omega_t^{n-1,n-1}+i\partial_tu_t-i\bar\partial_t\bar u_t\hspace{2pt},\qquad \text{ for }\hspace{2pt}t\sim 0 \label{72}
\end{equation}
defines a continuous family of positive ($n-1,n-1$)-forms on $X_t$ for $t$ close enough to 0 (see the proof of Theorem 6, \cite{fu2011note}). Hence, there exist a balanced metric $\omega_t$ on each respective fiber $X_t$ for $t$ sufficiently small, defined as the ($n-1$)-th root of $\Omega_t$ (Thanks to Michelson's trick (ii)', p.19 in \cite{michelsohn1982existence}), with $\omega_0=\omega$. 
\begin{enumerate}
    \item Suppose that $\omega$ is balanced hyperbolic. Then $\widetilde{\Omega}=\widetilde{\omega}^{n-1}=d\Gamma$ for some smooth and $\widetilde{\omega}$-bounded ($2n-3$)-form $\Gamma$ on $\widetilde{X}$. Then by identifying the bidegree ($n-1,n-1$)-component of this expression with respect to the complex structure on $\widetilde{X}_t$, we get:
    \begin{equation}
        \widetilde{\Omega_t}^{n-1,n-1}=\partial_t\Gamma_t^{n-2,n-1}+\bar\partial_t\Gamma_t^{n-1,n-2}\hspace{2pt},\qquad\forall t\in\B_\varepsilon(0) \label{70}
    \end{equation}
    After lifting~(\ref{72}) to $\widetilde{X}_t$, (\ref{70}) implies:
    \begin{equation}
        \widetilde{\Omega_t}=\partial_t(\Gamma_t^{n-2,n-1}+i\pi^*u_t)+\bar\partial_t(\Gamma_t^{n-1,n-2}-i\pi^*\bar u_t)\hspace{2pt},\qquad\forall t\in\B_\varepsilon(0)
    \end{equation}
    Put: $\alpha_t:=\Gamma_t^{n-2,n-1}+i\pi^*u_t$. Since $\widetilde{\Omega_t}^{n-1,n-1}$ is real, one can choose $\Gamma$ such that $\Gamma_t^{n-1,n-2}=\overline{\Gamma_t^{n-2,n-1}}$. Then:
    \begin{equation}
       \widetilde{\Omega_t}=\widetilde{\omega_t}^{n-1}=\partial_t\alpha_t+\bar\partial_t\bar\alpha_t\hspace{2pt},\qquad\text{ for }\hspace{2pt}t\sim 0 
    \end{equation}
    with:
    \begin{equation*}
    \lVert\alpha_t\rVert_{L^\infty_{\widetilde{h_t}}(\widetilde{X}_t)}\leq \lVert\Gamma_t^{n-2,n-1}\rVert_{L^\infty_{\widetilde{h_t}}(\widetilde{X}_t)}+\lVert\pi^*u_t\rVert_{L^\infty_{\widetilde{h_t}}(\widetilde{X}_t)}\leq C_t\lVert\Gamma\rVert_{L^\infty_{\widetilde{\omega}}(\widetilde{X})}+\lVert u_t\rVert_{L^\infty_{h_t}(X_t)}<\infty
    \end{equation*}
    where the constant $C_t>0$ depends only on the metric $h_t$ on the \textbf{compact} fiber $X_t$ (the metrics $\omega$ and $h_t$ are comparable on $X$, $\forall t\sim 0$). Hence, the balanced metrics $\omega_t$ are Gauduchon hyperbolic on the respective fibers $X_t$ for $t\sim 0$.
    \item Now let $\omega$ be a degenerate balanced metric on $X_0$, that is $\Omega:=\omega^{n-1}=d\alpha$ for some $\alpha\in\mathscr{C}_{2n-3}^\infty(X)$ (in particular $n\geq3$). Then the decomposition (\ref{69}) yields:
\begin{equation}
    \Omega_t^{n-1,n-1}=\bar\partial_t\alpha_t^{n-1,n-2}+\partial_t\alpha_t^{n-2,n-1}\hspace{2pt},\qquad t\in\B_\varepsilon(0)
\end{equation}
Since $\Omega_t^{n-1,n-1}$ is real, one can choose $\alpha$ such that $\alpha_t^{n-1,n-2}=\overline{\alpha_t^{n-2,n-1}}$. Hence, one gets:
\begin{equation}
    \Omega_t=\partial_t(\alpha_t^{n-2,n-1}+iu_t)+\bar\partial_t(\alpha_t^{n-1,n-2}-i\bar u_t)=\bar v_t+v_t\hspace{2pt},\qquad t\in\B_\varepsilon(0)
\end{equation}
where $v_t:=\bar\partial_t(\alpha_t^{n-1,n-2}-i\bar u_t)\in\mathscr{C}^\infty_{n-1,n-1}(X_t)\overset{(*)}{\cap}\text{Ker}\hspace{2pt}\partial_t{\cap}\text{Im}\hspace{2pt}\bar\partial_t$ (here ($*$) follows from the fact that $\partial_t\Omega_t=\bar\partial_t\Omega=0$). It remains to check that the following system is solvable for: 
\begin{equation}
\begin{cases}
\bar\partial w_t= v_t \quad,\qquad t\neq0\\
\partial w_t=0
\end{cases} \label{67}
\end{equation}
This would imply that $\omega_t^{n-1}=\Omega_t=\bar v_t+v_t=\partial_t\bar w_t+\bar\partial_tw_t=d(w_t+\bar w_t)$, hence the metrics $\omega_t$ are degenerate balanced. 

Since $v_t\in\text{Im}\bar\partial_t$, there exists $\beta_t\in\mathscr{C}^\infty_{n-1,n-2}(X_t)$, such that $v_t=\bar\partial_t\beta_t$, and since $\partial_tv_t=0$, we have $\partial_t\beta_t\in\text{Ker}\hspace{2pt}\bar\partial_t\cap\text{Im}\hspace{2pt}\partial_t$. By applying the mild ($n,n-2$)-$\partial\bar\partial$-property on each fiber $X_t$, we get $\partial_t\beta_t=\partial_t\bar\partial_t\gamma_t$, for some $\gamma_t\in\mathscr{C}^\infty_{n-1,n-3}(X_t)$. Then $w_t:=\beta_t-\bar\partial_t\gamma_t$ satisfies the system~(\ref{67}). Hence the desired result follows.
\end{enumerate}
\end{proof}
\begin{rmk}
We propose the following "alternative version" of Theorem~\ref{71} in the case where the family satisfies the $\partial\bar\partial$-property.
\end{rmk}
\begin{thm} Let $\{X_t:t\in\B_\varepsilon(0)\}$ be an analytic family of compact $\partial\bar\partial$-manifolds of complex dimension $n$ such that $X_0$ is balanced.
\begin{enumerate}
    \item Assume $X_0$ is balanced hyperbolic, and let $\omega$ be a balanced hyperbolic metric on $X_0$. Then $\omega$ extends to a $\mathscr{C}^\infty$-family of balanced metrics $\{\omega_t\}_{t\sim 0}$ that are Gauduchon hyperbolic on the respective fibers $X_t$ for small $t$.
    \item If moreover, $\omega$ is degenerate balanced, the $\mathscr{C}^\infty$-family of balanced metrics $\{\omega\}_{t\sim0}$ on the fibers $\{X_t\}_{t\sim 0}$ are degenerate balanced with a $\mathscr{C}^\infty$-family of $d$-potentials $\{w_t\}_{t\sim 0}$ for small $t$, i.e. $\omega=dw_0$ and $\omega_t=dw_t$, $\forall t\sim 0$.
\end{enumerate}  \label{68}
\end{thm}
\begin{proof}
    \begin{enumerate}
        \item As in Theorem~\ref{71}, let $\omega$ be a balanced hyperbolic metric on $X_0$, and let $\{h_t\}_{t\in\B_\varepsilon(0)}$ be a $\mathscr{C}^\infty$-family of Hermitian metrics on $(X_t)_{t\in\B_\varepsilon(0)}$ with $h_0=\omega$, and let $\Omega:=\omega^{n-1}$. When decomposing $\Omega$ with respect to the complex structure of a given fiber $X_t$, we get thanks to the $\partial_t\bar\partial_t$-property that: $\bar\partial_t\Omega_t^{n-1,n-1}$ is $\partial_t\bar\partial_t$-exact, i.e. $\bar\partial_t\Omega_t^{n-1,n-1}=i\partial_t\bar\partial_tu_t$ for some $u_t\in\mathscr{C}^\infty_{n-2,n-1}(X_t)\cap(\text{Ker}(\partial_t\bar\partial_t))^\perp$. The form $u_t$ can be chosen as the minimal $L^2_{h_t}$-norm solution of the equation $\bar\partial_t\Omega_t^{n-1,n-1}=i\partial_t\bar\partial_tu_t$, which is given by the Neumann-type formula (See \cite{popovici2015aeppli}, Theorem 4.1):
        \begin{equation*}
         u_t=(\partial_t\bar\partial_t)^*\G_{BC,t}(\bar\partial_t(i\Omega_t^{n-1,n-1}))\quad,\qquad t\sim 0
        \end{equation*}
         where $\G_{BC}$ is the \textbf{Green operator} associated with the \textbf{Bott-Chern Laplacian} $\Delta_{BC}$ defined as follows (see \cite{kodaira1960deformations}, Section $6$):
         \begin{equation*}
         \Delta_{BC}=\partial^*\partial+\bar\partial^*\bar\partial+(\partial\bar\partial)^*(\partial\bar\partial)+(\partial\bar\partial)(\partial\bar\partial)^*+(\partial^*\bar\partial)^*(\partial^*\bar\partial)+(\partial^*\bar\partial)(\partial^*\bar\partial)^*.
         \end{equation*}
         Then $\{u_t\}_{t\sim0}$ defines a $\mathscr{C}^\infty$-family of ($n-2,n-1$)-forms on the fibers $(X_t)_{t\sim 0}$ thanks to Theorem 5 in \cite{kodaira1960deformations}. Hence the family $\{\Omega_t\}_{t\in\B_\varepsilon(0)}$ defined by the formula~(\ref{72}) gives a $\mathscr{C}^\infty$-family of positive ($n-1,n-1$)-forms on the fibers $\{X_t\}_{t\sim0}$ for $t$ small enough, with $\Omega_0=\Omega$. Thus, taking the ($n-1)$-root oe each $\Omega_t$ defines a $\mathscr{C}^\infty$-family of balanced metrics $\{\omega_t\}_{t\sim0}$ on the respective fibers $X_t$, that are Gauduchon hyperbolic according to Theorem~\ref{71}.(1) with $\omega_0=\omega$.
         \item Moreover, if $\omega$ is degenerate balanced, the family of forms $w_t$ defined by the system (\ref{67}) (which is solvable thanks to the $\partial_t\bar\partial_t$-property) can be taken as the minimal $L^2_{h_t}$-norm solutions given by the following formula (Lemma 2.4, \cite{dinew2021generalised}):
         \begin{equation*}
         w_t=\G_{BC,t}(\bar\partial_t^*v_t+\bar\partial_t^*\partial_t\partial_t^*v_t)
         \end{equation*}
         Hence the family $\{w_t\}_{t\sim 0}$ varies in a smooth way with respect to $t$ thanks to Theorem 5 in \cite{kodaira1960deformations}, and the $\mathscr{C}^\infty$-family of ($n-1,n-1$) forms $\{\omega_t^{n-1}\}_{t\sim 0}$ admits a $\mathscr{C}^\infty$-family of $d$-potentials.
    \end{enumerate}
\end{proof}
A consequence of Theorem~\ref{71} is the following:
\begin{cor}
If $\{X_t:t\in\B_\varepsilon(0)\}$ is an analytic family of compact complex $n$-dimensional manifolds such that $X_0$ is balanced hyperbolic with $H_{\bar\partial}^{2,0}(X_0,\C)=0$, the fibers $X_t$ are Gauduchon hyperbolic for small $t$. \label{91}
\end{cor}
\begin{proof}
    Assume $X_0$ is a balanced hyperbolic manifold and $H_{\bar\partial}^{2,0}(X_0,\C)=0$. Since the function $t\in\B_\varepsilon(0)\longmapsto h_{\bar\partial_t}^{p,q}(t)=\dim_\C H_{\bar\partial_t}^{p,q}(X_t,\C)$ is upper semicontinuous in $t$ (Theorem 6, \cite{kodaira1960deformations}), we get $h_{\bar\partial_t}^{2,0}(X_t)\leq h_{\bar\partial}^{2,0}(X_0)=0$, which implies that $H_{\bar\partial_t}^{2,0}(X_t,\C)=0$ for small $t$. By Serre duality, we deduce that $H_{\bar\partial_t}^{n-2,n}(X_t,\C)=0$. Now, let $\alpha_t\in\mathscr{C}^\infty_{n-1,n-1}(X_t)$ be such that $\bar\partial_t\alpha_t\in\text{Im}\partial_t$, i.e. $\bar\partial_t\alpha_t=\partial_t\beta_t$, for some $\beta_t\in\mathscr{C}^\infty_{n-2,n}(X_t)$. Since $\bar\partial_t\beta_t=0$ (for bidegree reasons) and $H_{\bar\partial_t}^{n-2,n}(X_t,\C)=0$, $\beta_t\in\text{Im}\bar\partial_t$. Hence $\bar\partial_t\alpha_t\in\text{Im}(\partial_t\bar\partial_t)$, which means that the ($n-1,n$)-th weak $\partial\bar\partial$-Lemma is satisfied on $X_t$. We conclude that the nearby fibers $X_t$ are also balanced hyperbolic (for $t\sim 0$) thanks to Theorem~\ref{71}.
\end{proof}
\begin{ex}
\begin{enumerate}
    \item Under mild hypotheses, R.~Friedman proved that the connected sums $X=\#_k(S^3\times S^3)$ for $k\geq 2$ are $\partial\bar{\partial}$-manifolds (viewed as small smoothings of \textbf{Clemens manifolds}) \cite{friedman2020lemma}. This result was later extended to full generality by C.~Li \cite{li2024polarized}, thereby answering positively Question~5.6 in~\cite{popovici2015aeppli}. It follows from Theorem \ref{68}.(2) that any degenerate balanced metric $\omega$ on $X$ extends to a $\mathscr{C}^\infty$-family of degenerate balanced metrics $\{\omega\}_{t\sim 0}$ on any small family $\{X_t\}_{t\sim 0}$ of deformations of $X$.
    \item Set $X=\#_k(\Sp^3\times\Sp^3)$, and let $Y$ be a \emph{fake projective plane} (or a \emph{Mumford surface}); this is a compact complex surface with the same cohomology as $\CP^2$ (see \cite{mumford1979algebraic}). It can be viewed as an arithmetic quotient of the hyperbolic ball $\B^2_\C$. Since $X$ is a degenerate balanced $\partial\bar\partial$-manifold and $Y$ is Kähler hyperbolic, then $Z:=X\times Y$ is a balanced hyperbolic $\partial\bar\partial$-manifold. Applying the Künneth formula and the Hodge decomposition yields :
    \begin{equation*}
        h^{2,0}(Z)=h^{2,0}(X)+h^{1,0}(X)h^{1,0}(Y)+h^{2,0}(Y)=0
    \end{equation*}
    Hence any small deformation of $Z$ is balanced Gauduchon hyperbolic thanks to Corollary \ref{91}. Moreover, Theorem \ref{68}.(1) yields that given any balanced hyperbolic metric on $Z$, one construct a $\mathscr{C}^\infty$-family of balanced metrics $\{\omega_t\}_t$ that are Gauduchon hyperbolic on small deformations $\{Z_t\}_t$ of $Z$.
\end{enumerate}
\end{ex}
In the spirit of Conjecture 4.25 in \cite{khelifati2025holomorphic}, we propose the following conjecture:
\begin{conj}
    Let $X$ be a compact balanced manifold. If $X$ admits a balanced metric $\omega$ which is Gauduchon hyperbolic, then $X$ is balanced hyperbolic. \label{98}
\end{conj}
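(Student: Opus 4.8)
We outline a possible line of attack, working on the universal cover $\pi\colon\widetilde{X}\to X$ and using that the lifted metric $\widetilde{\omega}$ has bounded geometry, being pulled back from the compact manifold $X$. The plan is first to put the Gauduchon hyperbolicity of $\omega$ in normal form: writing $\widetilde{\omega}^{n-1}=\partial\alpha_0+\bar\partial\beta_0$ with $\alpha_0\in\mathscr{C}^\infty_{n-2,n-1}(\widetilde{X})$ and $\beta_0\in\mathscr{C}^\infty_{n-1,n-2}(\widetilde{X})$ both $L^\infty_{\widetilde{\omega}}$-bounded, and averaging this identity against its complex conjugate (the left-hand side being real), one obtains
\[
\widetilde{\omega}^{n-1}=\partial\alpha+\bar\partial\bar\alpha,\qquad \alpha:=\tfrac{1}{2}\bigl(\alpha_0+\bar\beta_0\bigr)\in\mathscr{C}^\infty_{n-2,n-1}(\widetilde{X}),\quad \|\alpha\|_{L^\infty_{\widetilde{\omega}}}<\infty.
\]
Since $\omega$ is balanced, $\partial\widetilde{\omega}^{n-1}=\bar\partial\widetilde{\omega}^{n-1}=0$, which forces $\partial\bar\partial\alpha=0$; hence $\theta:=\bar\partial\alpha\in\mathscr{C}^\infty_{n-2,n}(\widetilde{X})$ is $d$-closed, and it is $\bar\partial$-exact with the \emph{bounded} primitive $\alpha$.

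The next step is to reduce the conjecture to a single $\partial\bar\partial$-equation admitting a \emph{bounded} solution: it suffices to produce $\tau\in\mathscr{C}^\infty_{n-3,n-1}(\widetilde{X})$ with $\partial\bar\partial\tau=-\theta$ and $\partial\tau$ bounded. Indeed, then $\alpha':=\alpha-\partial\tau$ is bounded, $\partial\alpha'=\partial\alpha$, and $\bar\partial\alpha'=\bar\partial\alpha+\partial\bar\partial\tau=0$, whence also $\partial\bar\alpha'=0$, so the bounded real $(2n-3)$-form $\gamma:=\alpha'+\bar\alpha'$ satisfies $d\gamma=\partial\alpha'+\bar\partial\bar\alpha'=\partial\alpha+\bar\partial\bar\alpha=\widetilde{\omega}^{n-1}$ (using $\partial\partial\tau=0=\bar\partial\bar\partial\bar\tau$), which exhibits $\omega$ itself as a $\widetilde{d}$(bounded) balanced metric and proves $X$ balanced hyperbolic. (For $n=2$ there is no room for such a $\tau$, but then $\theta$ has bidegree $(0,2)$ and the assertion degenerates to a statement about Kähler metrics; the content of the conjecture is for $n\geq 3$.) Thus the whole problem comes down to the following assertion on $\widetilde{X}$: \emph{a $d$-closed $(n-2,n)$-form that is $\bar\partial$-exact with a bounded $\bar\partial$-primitive is $\partial\bar\partial$-exact with a primitive whose $\partial$-image is bounded.}

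To prove this reduced assertion I would combine two ingredients. On the analytic side, the bounded geometry of $\widetilde{\omega}$ makes $L^2$-Hodge theory for the Bott--Chern Laplacian $\Delta_{BC}$ available on $\widetilde{X}$; one would seek a minimal-norm solution of $\partial\bar\partial\tau=-\theta$ by a Green-operator formula $\tau=\G_{BC}(\,\cdot\,)$ in the spirit of Lemmas~\ref{68} and~\ref{70}, and then use the $L^\infty$-bound on $\alpha$ together with the interior elliptic estimates coming from bounded geometry to force $\partial\tau$ to be bounded. On the cohomological side, one would exploit the $\pi_1(X)$-equivariance of $\theta$ and $\alpha$ and the fact that $\widetilde{d}$-boundedness is a \emph{cohomological} property (Lemma 2.4 of \cite{chen2018compact}) in order to transfer the solvability question to the compact quotient $X$, where the three-space decomposition $(\ref{91})$ for $\Delta_{BC}$ is at one's disposal; along this route an appropriate weak $\partial\bar\partial$-Lemma on $X$ in the bidegree of $\theta$ — a variant of Definition~\ref{90} — is exactly what would make $\theta$ automatically $\partial\bar\partial$-exact.

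The main obstacle is the passage from mere solvability to a genuinely \emph{bounded} solution on the non-compact cover. Unlike part (2) of Theorem~\ref{71}, which is carried out entirely on compact fibres so that $\G_{BC}$ applies at once, $\widetilde{X}$ carries no Hodge theory for free, typically has infinite volume, and the relevant reduced $L^2$-Bott--Chern cohomology of $(\widetilde{X},\widetilde{\omega})$ in bidegree $(n-2,n)$ need not vanish. As evidence that the statement should nonetheless be true, the ``untilded'' analogue does hold: if $X$ is balanced with $\omega^{n-1}\in\text{Im}\,\partial+\text{Im}\,\bar\partial$, then pairing $\omega^{n-1}$ against any $d$-closed positive $(1,1)$-current $T$ and integrating by parts on the compact $X$ yields $\langle T,\omega^{n-1}\rangle=0$, whence $T=0$ and $X$ is degenerate balanced by Proposition~\ref{95}. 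In the $\widetilde{d}$(bounded) setting, however, this integration by parts produces uncontrolled boundary contributions over any fundamental domain of the possibly \emph{non-amenable} cover $\widetilde{X}$, so a naive averaging breaks down and one genuinely needs Gromov's $L^2$-technology \cite{gromov1991kahler}; establishing the uniform $L^\infty$-control on $\partial\tau$ — equivalently, the vanishing of that reduced $L^2$-group, or an extra cohomological hypothesis on $X$ that forces it — is the step I expect to be decisive, and the one on which the unconditional truth of the conjecture ultimately hinges.
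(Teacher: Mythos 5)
The statement you are addressing is stated in the paper as a \emph{conjecture}, and the paper offers no proof of it; on the contrary, it explicitly remarks that answering it would require solving $\partial$- and $\bar\partial$-problems in bidegrees $(n-2,n-1)$ and $(n-1,n-2)$ with $L^\infty$-estimates on a complete (non-compact) balanced manifold, and that this ``appears to be out of reach, even in the compact setting.'' Your proposal does not close that gap: it is an honest reduction, not a proof. The reduction itself is correct and worth recording --- symmetrizing $\widetilde{\omega}^{n-1}=\partial\alpha_0+\bar\partial\beta_0$ into $\partial\alpha+\bar\partial\bar\alpha$ with $\alpha$ bounded, observing that balancedness forces $\partial\bar\partial\alpha=0$, and noting that a single equation $\partial\bar\partial\tau=-\bar\partial\alpha$ with $\partial\tau$ bounded would upgrade $\omega$ to a $\widetilde{d}$(bounded) balanced metric --- and it sharpens the paper's own diagnosis by isolating one Bott--Chern-type equation in bidegree $(n-2,n)$ rather than two separate $\partial$/$\bar\partial$-problems. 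Your ``untilded'' sanity check (pairing against a closed positive $(1,1)$-current and invoking Proposition~\ref{95}) is also sound.

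The genuine gap is exactly the one you name in your last paragraph, and it is not a technicality that elliptic bootstrapping will absorb. On the non-compact cover $\widetilde{X}$ there is no three-space decomposition of the form (\ref{91}), $\G_{BC}$ is not available, and a minimal-$L^2$-norm solution need not exist at all when $\widetilde{X}$ has infinite volume and $\theta=\bar\partial\alpha$ is merely bounded (hence possibly not in $L^2$); even granting an $L^2$-solution, interior Schauder estimates in bounded geometry give \emph{local} uniform bounds only if one already controls the solution uniformly on balls, which is precisely what is unknown. Likewise, the proposed descent to the compact quotient via a weak $\partial\bar\partial$-Lemma in the style of Definition~\ref{90} would produce a primitive on $X$, not an equivariant bounded primitive on $\widetilde{X}$, and Lemma 2.4 of \cite{chen2018compact} transfers $\widetilde{d}$-boundedness only between cohomologous $d$-closed forms on $X$, which does not apply to the auxiliary form $\theta$ living upstairs. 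So the decisive analytic input --- a bounded solution of $\partial\bar\partial\tau=-\theta$ on $\widetilde{X}$, or a vanishing statement for the relevant reduced $L^2$ Bott--Chern group --- is still missing, and with it the conjecture remains open, as the paper itself asserts.
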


In order to address the deformation openness of balanced hyperbolicity and answer positively Conjecture~\ref{98}, one would need to solve the $\bar\partial$ and $\partial$-problems of the form \ref{67} and \ref{69} in bidegrees $(n-2,n-1)$ and $(n-1,n-2)$, respectively, with $L^\infty$-estimates on a complete balanced manifold (equipped with a balanced metric $\omega$ such that $\omega^{n-1}$ is ($\partial+\bar\partial$)(bounded) if needed). At present, this appears to be out of reach, even in the compact setting.

Even obtaining an analogue of Corollary 4.20 in \cite{khelifati2025holomorphic} appears to be highly nontrivial. Indeed, to the best of our knowledge, no satisfactory bounds are currently available for the spectra of the Laplacians in bidegrees $(n-1,n-2)$ and $(n-2,n-1)$, in contrast with the situation in bidegrees $(n,n-1)$ and $(n-1,n)$, where such estimates have been established in \cite{marouani2022some}, Corollary 3.7. This difficulty is further compounded by the fact that, on a balanced manifold, the Bott-Chern Laplacian $\Delta_{BC}$ does not commute with the $\partial$ and $\bar\partial$ operators and their adjoints $\partial^*$ and $\bar\partial^*$, while the commutation deficit remains hard to control.

\section{degree \texorpdfstring{$2p$}{2p} Hyperbolicity Notions}
Let us recall some of the basic definitions. Let $X$ be a compact complex manifold with $\dim_\C X=n$, and $\pi:\widetilde{X}\longrightarrow X$ be its universal cover. Recall that for any $k$-form $\alpha$ in $X$, we denote its pullback to the universal cover $\widetilde{X}$ by $\pi^*\alpha=:\widetilde{\alpha}$.
\begin{dfn}[\textbf{Special metrics}] A Hermitian metric $\omega$ on $X$ is said to be:
\begin{enumerate}
    \item\cite{gauduchon19841} \textbf{Strong Kähler with torsion} $($\textbf{SKT} for short$)$ or \textbf{pluriclosed} if $\partial\bar\partial\omega=0$.
    \item \cite{gauduchon1977theoreme} \textbf{Gauduchon} $($or \textbf{standard}$)$ if $\partial\bar\partial\omega^{n-1}=0$.
    \item $($\cite{sullivan1976cycles},\cite{harvey1983intrinsic},\cite{gauduchon19841}, \cite{streets2010parabolic}$)$ \textbf{Hermitian-symplectic} $($\textbf{HS} for short$)$ if $\exists\alpha\in\mathscr{C}^\infty_{2,0}(X)$ such that $d(\alpha+\omega+\bar\alpha)=0$. Or equivalently, $\partial\omega$ is $\bar\partial$-exact.
\end{enumerate}
\end{dfn}
We also recall the definition of the hyperbolicity notions associated with SKT and Gauduchon metrics on a compact complex manifold $X$.
\begin{dfn}[\textbf{SKT and Gauduchon hyperbolicity}, \cite{marouani2023skt}] 
\begin{enumerate}
    \item A $(p,q)$-form $\eta$ on $(X,\omega)$ is said to be \textbf{$(\widetilde{\partial+\bar\partial})($bounded$)$} if there exist a $\widetilde{\omega}$-bounded $(p-1,q)$-form $\alpha$ and a $\widetilde{\omega}$-bounded $(p,q-1)$-form $\beta$ on $\widetilde{X}$ such that $\widetilde{\eta}=\partial\alpha+\bar\partial\beta$.
    \item $X$ is said to be \textbf{SKT hyperbolic} if it admits a $(\widetilde{\partial+\bar\partial})($bounded$)$ SKT metric.
    \item $X$ is said to be \textbf{Gauduchon hyperbolic} if it admits a Gauduchon metric $\omega$ such that $\omega^{n-1}$ is $(\widetilde{\partial+\bar\partial})($bounded$)$.
\end{enumerate} \label{99}
\end{dfn}
\subsection{Hermitian-symplectic Hyperbolicity}
First, we recall the definition of \emph{sG} metrics and \emph{sG hyperbolicity}:
\begin{dfn}[\cite{popovici2013deformation}, Definition 4.1 \& Proposition 4.2] Let $X$ be a compact complex manifold with $\dim_\C X=n$. A Hermitian metric $\omega$ on $X$ is \textbf{strongly Gauduchon} $($or \textbf{sG} for short$)$ if one of the following equivalent conditions holds:
\begin{enumerate}
    \item The $(n,n-1)$-form $\partial\omega^{n-1}$ is $\bar\partial$-exact.
    \item $\exists\alpha\in\mathscr{C}^\infty_{n,n-2}(X)$ such that $d(\alpha+\omega^{n-1}+\bar\alpha)=0$.
\end{enumerate}
If $X$ carries such a metric, we say that $X$ is a \textbf{strongly Gauduchon} $($or \textbf{sG}$)$ manifold.
\end{dfn}
\begin{dfn}[\cite{ma2024strongly}, Definition 2.2] We say that $X$ is \textbf{sG hyperbolic} if there exists an sG metric $\omega$ on $X$ such that $\omega^{n-1}$ is the $(n-1,n-1)$-component of a real $\widetilde{d}($bounded$)$, $d$-closed smooth $(2n-2)$-form $\Omega$.
\end{dfn}
Now, we define the notion of \textit{Hermitian-symplectic hyperbolicity} (or \textit{HS hyperbolicity} for short) on compact complex manifolds:
\begin{dfn}
We say that a compact complex manifold $X$ with $\dim_\C X=n$ is \textbf{HS hyperbolic} if it admits a real smooth closed $2$-form $\Omega$ such that:
\begin{enumerate}
    \item $\omega:=\Omega^{1,1}$ is positive definite, i.e. $\omega$ is a Hermitian metric.
    \item $\Omega$ is $\widetilde{d}($bounded$)$, i.e. $\widetilde{\Omega}=d\eta$ for some \textbf{bounded} $1$-form $\eta$ on $\widetilde{X}$.
\end{enumerate}
We also say that the metric $\omega$ is \textbf{HS hyperbolic}.
\end{dfn}
After all these definitions of hyperbolicity notions, we observe that the hierarchy that we have on special metrics remains preserved in the hyperbolic setting, that is to say:
\begin{prop} Let $(X,\omega)$ be a compact Hermitian manifold with $\dim_\C X=n$. Then:
\begin{equation*}
\begin{tikzcd}
\text{$\omega$ is Kähler hyperbolic} \arrow[Rightarrow]{r}{(1)} \arrow[Rightarrow]{d}[swap]{(3)} & \text{$\omega$ is HS hyperbolic} \arrow[Rightarrow]{r}{(2)} & \text{$\omega$ is SKT hyperbolic} \\
\text{$\omega$ is balanced hyperbolic} \arrow[Rightarrow]{r}[swap]{(4)} & \text{$\omega$ is sG hyperbolic} \arrow[Rightarrow]{r}{(5)} & \text{$\omega$ is Gauduchon hyperbolic}
\end{tikzcd} 
\end{equation*} \label{75} \vspace{-3ex}
\end{prop}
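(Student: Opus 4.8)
The plan is to verify each of the five implications separately, in each case lifting the defining data to the universal cover $\widetilde{X}$ and chasing bidegrees; implications $(1),(3),(4)$ are essentially immediate once one writes down the right forms, while $(2)$ and $(5)$ are the ones requiring a small argument.

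For $(1)$: if $\omega$ is K\"ahler hyperbolic, then $\widetilde{\omega}=d\eta$ for some bounded $1$-form $\eta$ on $\widetilde{X}$, and $\omega$ is closed, so taking $\Omega:=\omega$ (a real $d$-closed $2$-form with $\Omega^{1,1}=\omega>0$ and $\widetilde{\Omega}=d\eta$ with $\eta$ bounded) shows directly that $\omega$ is HS hyperbolic. For $(3)$: K\"ahler hyperbolic means $\widetilde{\omega}=d\eta$ with $\eta$ bounded; raising to the $(n-1)$-st power, $\widetilde{\omega}^{\,n-1}=(d\eta)^{n-1}=d(\eta\wedge(d\eta)^{n-2})$, and $\eta\wedge(d\eta)^{n-2}$ is a bounded $(2n-3)$-form on $\widetilde{X}$ (product of bounded forms with respect to the bounded metric $\widetilde{\omega}$), so $\omega$ is balanced hyperbolic. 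For $(4)$: if $\omega$ is balanced hyperbolic, $\widetilde{\omega}^{\,n-1}=d\Gamma$ for a bounded $(2n-3)$-form $\Gamma$; decomposing $\Gamma=\Gamma^{n-1,n-2}+\Gamma^{n-2,n-1}$ into bidegrees (which preserves boundedness, these being the $\widetilde{\omega}$-orthogonal components) and writing $\Omega:=\omega^{n-1}$, one checks $\widetilde{\Omega}=\partial\Gamma^{n-2,n-1}+\bar\partial\Gamma^{n-1,n-2}+(\text{bidegree }(n,n-2)\text{ and }(n-2,n)\text{ terms})$; since $\Omega$ has pure bidegree $(n-1,n-1)$ the off-diagonal pieces must vanish, so taking $\alpha:=\Gamma^{n-2,n-1}$ exhibits $d(\alpha+\omega^{n-1}+\bar\alpha)=0$ upstairs and downstairs, and $\alpha$ is bounded; hence $\omega$ is sG hyperbolic.

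For $(2)$: suppose $\omega$ is HS hyperbolic, so there is a real $d$-closed $2$-form $\Omega=\Omega^{2,0}+\omega+\Omega^{0,2}$ with $\widetilde{\Omega}=d\eta$, $\eta$ bounded. First note $\omega$ is an SKT metric: $d\Omega=0$ gives, in bidegree $(2,1)$, $\partial\omega+\bar\partial\Omega^{2,0}=0$, whence $\partial\bar\partial\omega=-\partial\bar\partial\Omega^{2,0}=\partial^2\Omega^{2,0}=0$ — equivalently, $\partial\omega=-\bar\partial\Omega^{2,0}$ is $\bar\partial$-exact, which is the Hermitian-symplectic condition, and this forces $\partial\bar\partial\omega=0$. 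Next, decompose $\eta=\eta^{1,0}+\eta^{0,1}$ on $\widetilde{X}$; both pieces are bounded. Comparing bidegree $(1,1)$ in $\widetilde{\Omega}=d\eta$ gives $\widetilde{\omega}=\partial\eta^{0,1}+\bar\partial\eta^{1,0}$, which is precisely the statement that $\widetilde{\omega}$ is $(\widetilde{\partial+\bar\partial})(\text{bounded})$ with the bounded primitives $\eta^{0,1}$ (of bidegree $(0,1)$) and $\eta^{1,0}$ (of bidegree $(1,0)$). Hence $\omega$ is an SKT metric that is $(\widetilde{\partial+\bar\partial})(\text{bounded})$, i.e.\ $\omega$ is SKT hyperbolic.

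For $(5)$: suppose $\omega$ is sG hyperbolic. By definition there is a real $d$-closed $(2n-2)$-form $\Omega$ whose $(n-1,n-1)$-component is $\omega^{n-1}$ and such that $\widetilde{\Omega}=d\Gamma$ for a bounded $(2n-3)$-form $\Gamma$ on $\widetilde{X}$. Writing $\Omega=\Omega^{n,n-2}+\omega^{n-1}+\Omega^{n-2,n}$, the equation $d\Omega=0$ in bidegree $(n,n-1)$ reads $\partial\omega^{n-1}+\bar\partial\Omega^{n,n-2}=0$, so $\partial\omega^{n-1}$ is $\bar\partial$-exact and in particular $\partial\bar\partial\omega^{n-1}=0$: $\omega$ is a Gauduchon metric. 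Decomposing $\Gamma=\Gamma^{n,n-3}+\Gamma^{n-1,n-2}+\Gamma^{n-2,n-1}+\Gamma^{n-3,n}$ into its bidegree components — each bounded with respect to $\widetilde{\omega}$ — and comparing bidegree $(n-1,n-1)$ in $\widetilde{\Omega}=d\Gamma$ yields $\widetilde{\omega}^{\,n-1}=\partial\Gamma^{n-2,n-1}+\bar\partial\Gamma^{n-1,n-2}$, exhibiting $\omega^{n-1}$ as $(\widetilde{\partial+\bar\partial})(\text{bounded})$. Thus $\omega$ is Gauduchon hyperbolic, completing the diagram.

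\textbf{Main obstacle.} None of the five steps is genuinely deep; the only point requiring care is bookkeeping the bidegree decompositions on $\widetilde{X}$ and confirming that the $L^\infty$-bounds pass to the components (which holds because the pointwise norm of a form dominates that of each of its bidegree components, the decomposition being orthogonal for the Hermitian metric $\widetilde{\omega}=\pi^*\omega$). The implications $(2)$ and $(5)$ additionally use the elementary fact that $d$-closedness of the enveloping form $\Omega$ forces the relevant $\partial\bar\partial$-closedness of $\omega$ (resp.\ $\omega^{n-1}$), which is where the ``torsion'' special-metric conditions enter.
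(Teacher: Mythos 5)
Your proof is correct and follows essentially the same route as the paper: take $\Omega=\omega$ (resp.\ $\Omega=\omega^{n-1}$) for (1), (3), (4), and extract the pure-bidegree components of $d\eta$ (resp.\ $d\Gamma$) on the universal cover for (2) and (5), using that the bidegree components of a bounded form remain bounded. The only slip is in (4), where you offer $\alpha:=\Gamma^{n-2,n-1}$ as the form realizing the sG condition $d(\alpha+\omega^{n-1}+\bar\alpha)=0$ on $X$: that form lives on $\widetilde{X}$, not on $X$, and has bidegree $(n-2,n-1)$ rather than the required $(n,n-2)$; this is immaterial, however, since $d\omega^{n-1}=0$ lets you take $\alpha=0$, so that $\Omega:=\omega^{n-1}$ is already the required $d$-closed, $\widetilde{d}$(bounded) $(2n-2)$-form whose $(n-1,n-1)$-component is $\omega^{n-1}$.
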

Almost all of the implications have been proved in the previously mentioned articles (where these hyperbolicity notions were defined). We provide proofs of the implications in this diagram to make the exposition self-contained.
\begin{proof}
\begin{enumerate}[label=(\arabic*)]
    \item If $\omega$ is Kähler hyperbolic, $\Omega=\omega$ is a smooth real d-closed $2$-form which is $\widetilde{d}$(bounded) and $\Omega^{1,1}=\Omega=\omega$.
    \item $\omega$ being HS hyperbolic means that $\omega>0$ and there exists a smooth $(2,0)$-form $\alpha$ in $X$ such that $\Omega=\alpha+\omega+\bar\alpha$ is a $d$-closed, $\widetilde{d}$(bounded) $2$-form, i.e. $\widetilde{\Omega}=d\eta$, where $\eta$ is a $\widetilde{\omega}$-bounded $1$-form on $\widetilde{X}$. This implies that: \begin{enumerate}
        \item $\partial\bar\partial\omega=0$ since $d\Omega=0$ $\implies$ $\omega$ is SKT.
        \item $\widetilde{\omega}=\partial\eta^{0,1}+\bar\partial\eta^{1,0}$ (by taking the $(1,1)$-part in $\widetilde{\Omega}=d\eta$). Hence $\omega$ is SKT hyperbolic.
    \end{enumerate}
    \item If $\omega$ is Kähler hyperbolic, that is $\widetilde{\omega}=d\eta$ for some $\widetilde{\omega}$-bounded $1$-form $\eta$ on $\widetilde{X}$, then:
    \begin{equation*}
        \widetilde{\omega}^{n-1}=\widetilde{\omega}^{n-2}\wedge\widetilde{\omega}=\widetilde{\omega}^{n-2}\wedge d\eta=d(\widetilde{\omega}^{n-2}\wedge \eta) \qquad \text{since $\widetilde{\omega}$ is closed}
    \end{equation*}
    Since $\widetilde{\omega}^{n-2}\wedge \eta$ is bounded, $\omega$ is balanced hyperbolic.
    \item This is obvious (the proof is similar to (1)).
    \item The proof is similar to (2).
\end{enumerate}  
\end{proof}
S. Marouani proved in \cite{marouani2023skt} that SKT hyperbolicity implies Kobayashi hyperbolicity (Theorem 3.5). Thanks to (2) in Proposition \ref{75}, we deduce that:
\begin{cor}
Every HS hyperbolic compact complex manifold is Kobayashi hyperbolic. \label{83}
\end{cor}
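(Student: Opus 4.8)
The plan is to obtain this immediately from the implications already established. By part~(2) of Proposition~\ref{75}, any HS hyperbolic metric $\omega$ on $X$ is in particular SKT hyperbolic; hence $X$ is an SKT hyperbolic manifold. One then invokes Theorem~3.5 of \cite{marouani2023skt}, which states that every compact SKT hyperbolic manifold is Kobayashi hyperbolic. Composing the two facts gives the corollary, so there is essentially no further obstacle: the substance has already been placed in Proposition~\ref{75} and in \cite{marouani2023skt}, and the proof I would write in the paper is precisely this two-line deduction.

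For the reader's benefit one could also argue directly, without routing through SKT hyperbolicity. Since $X$ is compact, Brody's reparametrization theorem reduces Kobayashi hyperbolicity to the nonexistence of a non-constant entire curve $f:\C\longrightarrow X$. Lift such an $f$ to $\widetilde f:\C\longrightarrow\widetilde X$ (possible because $\C$ is simply connected), and write $\widetilde\Omega=d\eta$ with $\eta$ a bounded $1$-form on $\widetilde X$. On the Riemann surface $\C$ the $(2,0)$- and $(0,2)$-parts of $\Omega$ pull back to zero, so $\widetilde f^*\widetilde\Omega=\widetilde f^*\widetilde\omega=d(\widetilde f^*\eta)$. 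Set $a(r):=\int_{\{|z|<r\}}\widetilde f^*\widetilde\omega\geq 0$, which is non-decreasing and satisfies $a(r_0)>0$ for some $r_0$ since $f$ is non-constant. By Stokes' theorem $a(r)=\int_{|z|=r}\widetilde f^*\eta$, hence $a(r)\leq\|\eta\|_{L^\infty_{\widetilde\omega}(\widetilde X)}\,\ell(r)$, where $\ell(r)$ is the $\widetilde\omega$-length of $\widetilde f(\{|z|=r\})$; the Cauchy--Schwarz inequality gives $\ell(r)^2\leq 2\pi r\,a'(r)$. Combining these two estimates yields the differential inequality $a(r)^2\leq 2\pi\,\|\eta\|_{L^\infty_{\widetilde\omega}}^2\,r\,a'(r)$, i.e. $a'(r)/a(r)^2\geq (2\pi\|\eta\|^2_{L^\infty_{\widetilde\omega}})^{-1}r^{-1}$ for $r\geq r_0$; integrating from $r_0$ to $R$ and letting $R\to\infty$ forces $1/a(r_0)=\infty$, a contradiction.

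The only mildly delicate point in the direct route is this passage to the universal cover together with the bidegree observation that makes $\widetilde f^*\widetilde\Omega$ exact against a bounded primitive on $\C$; once that is in place the rest is the classical linear-isoperimetric-inequality computation going back to Gromov \cite{gromov1991kahler}. I do not expect any genuine difficulty here, and in the paper I would simply record the short deduction from Proposition~\ref{75}(2) and Theorem~3.5 of \cite{marouani2023skt}.
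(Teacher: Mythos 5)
Your first route is exactly the paper's proof: the corollary is deduced in one line from Proposition~\ref{75}(2) (HS hyperbolic $\Rightarrow$ SKT hyperbolic) combined with Theorem~3.5 of \cite{marouani2023skt} (SKT hyperbolic $\Rightarrow$ Kobayashi hyperbolic). The supplementary direct argument via Brody reparametrization and the linear isoperimetric inequality is also sound, but the paper records only the short deduction.
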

Based on the fact that every compact Hermitian-symplectic manifold is strongly Gauduchon (as shown in \cite{yau2023strominger}, Lemma 1, §.2, and further detailed in \cite{dinew2021generalised}, Proposition 2.1), one can show that the following holds:
\begin{prop}
Every compact HS hyperbolic manifold $X$ is sG hyperbolic. \label{76}
\end{prop}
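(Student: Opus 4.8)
The plan is to take the Hermitian-symplectic $2$-form itself, raise it to the power $n-1$, and then check that positivity, $d$-closedness and $\widetilde d$-boundedness are all inherited. Concretely, assume $X$ is HS hyperbolic, witnessed by a real smooth closed $2$-form $\Omega=\bar A+\omega+A$ with $A:=\Omega^{2,0}\in\mathscr{C}^\infty_{2,0}(X)$ and $\omega:=\Omega^{1,1}>0$, together with a bounded $1$-form $\eta$ on $\widetilde X$ such that $\widetilde\Omega=d\eta$. I would set $\Psi:=\Omega^{n-1}$. Since $\Omega$ is real and $d$-closed, $\Psi$ is a real, $d$-closed smooth $(2n-2)$-form, and its $\widetilde d$-boundedness is immediate: $\widetilde\Psi=\widetilde\Omega^{n-1}=\widetilde\Omega^{n-2}\wedge d\eta=d(\widetilde\Omega^{n-2}\wedge\eta)$ because $\widetilde\Omega$ is $d$-closed, and $\widetilde\Omega^{n-2}\wedge\eta$ is bounded, since $\widetilde\Omega=\pi^*\Omega$ is bounded (the form $\Omega$ being smooth on the compact $X$) and $\eta$ is bounded by hypothesis.

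The next step is to pin down the $(n-1,n-1)$-component of $\Psi$. As $A$, $\omega$ and $\bar A$ are all of even degree they commute under the exterior product, so the multinomial expansion of $(\bar A+\omega+A)^{n-1}$ retains only the balanced monomials and
\[
\Psi^{n-1,n-1}=\sum_{0\le k\le (n-1)/2}\frac{(n-1)!}{(k!)^2\,(n-1-2k)!}\;A^k\wedge\bar A^k\wedge\omega^{\,n-1-2k},
\]
whose $k=0$ term is precisely $\omega^{n-1}$. For $k\ge 1$ the form $A^k$ has bidegree $(2k,0)$ and $\ii^{(2k)^2}=1$, so $A^k\wedge\bar A^k$ — which equals $\ii^{(2k)^2}A^k\wedge\overline{A^k}$ — is a weakly positive $(2k,2k)$-form, and wedging it with the strongly positive form $\omega^{\,n-1-2k}$ keeps it positive (see \cite{demailly1997complex}, Ch.\ III). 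Hence $\Psi^{n-1,n-1}=\omega^{n-1}+(\text{a positive }(n-1,n-1)\text{-form})$, which is in particular positive definite; by Michelsohn's trick ((ii)$'$, p.\ 19 in \cite{michelsohn1982existence}) there is then a unique Hermitian metric $\omega'$ on $X$ with $(\omega')^{n-1}=\Psi^{n-1,n-1}$.

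It then remains to read off sG hyperbolicity. The metric $\omega'$ is sG, because $\Psi=\Psi^{n,n-2}+(\omega')^{n-1}+\overline{\Psi^{n,n-2}}$ is $d$-closed, which is exactly condition (2) in the definition of sG metrics (with $\alpha:=\Psi^{n,n-2}\in\mathscr{C}^\infty_{n,n-2}(X)$). Moreover $(\omega')^{n-1}$ is the $(n-1,n-1)$-component of the real, $d$-closed, $\widetilde d$(bounded) $(2n-2)$-form $\Psi$; here one uses that $\omega$ and $\omega'$, being two Hermitian metrics on the compact manifold $X$, are uniformly equivalent, so boundedness on $\widetilde X$ does not depend on which lifted metric is used to measure it. Therefore $X$ is sG hyperbolic.

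The only point that is not purely formal is the positivity of the cross terms $A^k\wedge\bar A^k\wedge\omega^{\,n-1-2k}$ for $k\ge 1$, which rests on the standard facts that $\ii^{q^2}\beta\wedge\bar\beta$ is a positive $(q,q)$-form for every $(q,0)$-form $\beta$ and that the exterior product of a positive form with a strongly positive form is positive; everything else (extracting $\omega'$ through Michelsohn's root, and tracking the bounded primitive) is bookkeeping. In effect this is the argument showing that an HS manifold is sG (\cite{yau2023strominger}, \cite{dinew2021generalised}), with the boundedness of the primitive of $\Omega$ carried along through the construction.
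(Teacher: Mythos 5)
Your proof is correct and follows essentially the same route as the paper: raise $\Omega$ to the power $n-1$, obtain the bounded primitive $\widetilde{\Omega}^{n-2}\wedge\eta$, extract the $(n-1,n-1)$-component (your multinomial coefficients agree with the paper's $\binom{n-1}{2l}\binom{2l}{l}$), bound it below by $\omega^{n-1}$ using the positivity of $A^k\wedge\bar A^k\wedge\omega^{n-1-2k}$, and apply Michelsohn's trick. The only difference is that you spell out the verification of the sG condition and the boundedness of $\widetilde{\Omega}$ slightly more explicitly, which the paper leaves implicit.
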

\begin{proof}
Let $\omega$ be a HS hyperbolic metric, this means that $\omega>0$ and $\exists\alpha\in\mathscr{C}^\infty_{2,0}(X)$ such that $\Omega=\alpha+\omega+\bar\alpha$ is $d$-closed and $\widetilde{\Omega}=d\eta$ for some $\widetilde{\omega}$-bounded $1$-form $\eta$ on $\widetilde{X}$. Put $\Gamma:=\Omega^{n-1}$, then:
\begin{enumerate}
    \item $\widetilde{\Gamma}=\widetilde{\Omega}^{n-2}\wedge d\eta=d(\widetilde{\Omega}^{n-2}\wedge \eta)$\quad (since $\widetilde{\Omega}$ is closed) $\xLongrightarrow{\qquad}$ $\Gamma$ is $\widetilde{d}$(bounded).
    \item $\displaystyle\Gamma=(\alpha+\omega+\bar\alpha)^{n-1}=\underset{k=0}{\overset{n-1}{\sum}}\underset{l=0}{\overset{k}{\sum}}\binom{n-1}{k}\binom{k}{l}\alpha^l\wedge\bar\alpha^{k-l}\wedge\omega^{n-k-1}$. By taking the bidegree $(n-1,n-1)$ in this expression (which corresponds to $l=k-l$), we get:
    \begin{align*}
        \Gamma^{n-1,n-1}&=\underset{l=0}{\overset{\left\lfloor\frac{n-1}{2}\right\rfloor}{\sum}}\binom{n-1}{2l}\binom{2l}{l}\alpha^l\wedge\bar\alpha^l\wedge\omega^{n-2l-1} \\
        &=\omega^{n-1}+2\binom{n-1}{2}\alpha\wedge\bar\alpha\wedge\omega^{n-3}+6\binom{n-1}{4}\alpha^2\wedge\bar\alpha^2\wedge\omega^{n-5}+\cdots \\ &\overset{(*)}{\geq}\omega^{n-1}>0
    \end{align*}
    where the inequality $(*)$ comes from the fact that $\alpha^l\wedge\bar\alpha^l\wedge\omega^{n-2l-1}\geq 0$ (see Chapter III in \cite{demailly1997complex}, Example 1.2 and Proposition 1.11). We conclude thanks to Michelsohn's trick that $\omega':=\sqrt[n-1]{\Gamma^{n-1,n-1}}$ defines a sG hyperbolic metric on $X$.
\end{enumerate}
\end{proof}
\subsection{\texorpdfstring{$p$}{p}-SKT and \texorpdfstring{$p$}{p}-HS Hyperbolicity}
Now, we will generalize the (first-order) notions of SKT hyperbolicity and HS hyperbolicity to notions of hyperbolicity of degree $2p$ for $2\leq p\leq n-1$, as was done in \cite{haggui2023compact}, where Gromov’s Kähler hyperbolicity was generalized to \textit{p-Kähler hyperbolicity}. But first, let us recall the definition of a higher degree positivity notion (see Definition 1.1 in \cite{harvey1972positive}, and Definition 1.1, Chapter III in \cite{demailly1997complex}). This notion has been referred to as \textbf{transversality} in \cite{sullivan1976cycles}, Definition I.3, and in \cite{alessandrini1987closed}, Definition 1.3.
\begin{dfn} Let $E$ be a $\C$-vector space with $\dim_\C E=n$, and $E^*$ its dual. And let $1\leq p,q\leq n-1$ be such that $p+q=n$.
\begin{enumerate}[label=(\arabic*)]
    \item A $(p,p)$-form $u\in\Lambda^{p,p}(E^*)$ is \textbf{weakly strictly positive} if for any linearly independent $(1,0)$-forms $\alpha_1,\dots,\alpha_q\in E^*$ we have:
    \begin{equation*}
        u\wedge i\alpha_1\wedge\bar\alpha_1\wedge\cdots\wedge i\alpha_q\wedge\bar\alpha_q
    \end{equation*}
    is a strictly positive volume form.
    \item Let $X$ be a compact complex manifold with $\dim_\C X=n$. A $(p,p)$-form $\alpha\in\mathscr{C}^\infty_{p,p}(X)$ is \textbf{weakly strictly positive} if $\forall x\in X$, $\alpha(x)\in\Lambda^{p,p}T_x^*X$ is weakly strictly positive in the sense of $(1)$.
\end{enumerate}
\begin{rmk}[Corollary 1.5-Chapter III, \cite{demailly1997complex}]
All weakly strictly positive $(p,p)$-forms $u$ are \textbf{real}, i.e. $\bar u=u$. 
\end{rmk}
\end{dfn}
Here, we recall the generalizations of special metrics in degree $2p$, with $1\leq p\leq n-1$.
\begin{dfn} Let $X$ be a compact complex manifold with $\dim_\C X=n\geq 2$, and fix $1\leq p\leq n-1$. Then $X$ is said to be:
\begin{enumerate}
    \item \textbf{p-Kähler} if it admits a $d$-closed weakly strictly positive $(p,p)$-form $\Omega$. Here, $\Omega$ is called a \textbf{p-Kähler form} $($Definition 1.11, \cite{alessandrini1987closed}$)$.
    \item \textbf{p-Hermitian-symplectic} $($or \textbf{p-HS} for short$)$ if it admits a real $d$-closed $2p$-form $\Omega$ such that $\Omega^{p,p}$ is weakly strictly positive. Here, $\Omega^{p,p}$ is called a \textbf{p-HS form} $($Definition 1.11, \cite{alessandrini1987closed}$)$.
    \item \textbf{p-SKT} $($or \textbf{p-pluriclosed}$)$ if it admits a $\partial\bar\partial$-closed weakly strictly positive $(p,p)$-form $\Omega$. Here, $\Omega$ is called a \textbf{p-SKT form} $($Theorem 2.4, \cite{alessandrini2011classes}$)$.
\end{enumerate}
\end{dfn}
\begin{rmk} \begin{enumerate}
    \item $1$-Kähler=Kähler, $1$-HS=HS and $1$-SKT=SKT.
    \item $(n-1)$-Kähler=Balanced, $(n-1)$-HS=sG, $(n-1)$-SKT=Gauduchon.
    \item \textbf{Astheno-Kähler} $\xLongrightarrow{\qquad}$ $(n-2)$-SKT (see §4 in \cite{jost1993nonlinear}). More generally, the notion of $p$-SKT defined in \cite{ivanov2013vanishing} implies the notion of $p$-SKT used in this article, but they are not equivalent in general.
    \item For every $1\leq p\leq n-1$, we have: $p$-Kähler $\xLongrightarrow{\qquad}$ $p$-HS $\xLongrightarrow{\qquad}$ $p$-SKT.
\end{enumerate}
\end{rmk}
Now, we define their associated hyperbolicity notions.
\begin{dfn} Let $X$ be a compact complex manifold with $\dim_\C X=n\geq 2$, and let $1\leq p\leq n-1$. We say that $X$ is:
\begin{enumerate}
    \item \textbf{p-Kähler hyperbolic} if it admits a $\widetilde{d}($bounded$)$ $p$-Kähler form $($Definition 3.1, \cite{haggui2023compact}$)$.
    \item \textbf{p-HS hyperbolic} if it admits a $\widetilde{d}($bounded$)$ $p$-HS form $($this was referred to as weakly $p$-Kähler hyperbolic in Definition 3.1, \cite{ma2024strongly}$)$.
    \item \textbf{p-SKT hyperbolic} if it admits a $(\widetilde{\partial+\bar\partial)}($bounded$)$ $p$-SKT form.
\end{enumerate}
\end{dfn}
\begin{rmk} 
\begin{enumerate} 
\item The case $p=1$ recovers the classical notions of Kähler hyperbolicity, HS hyperbolicity, and SKT hyperbolicity.
\item $(n-1)$-Kähler hyperbolic=Balanced hyperbolic, $(n-1)$-HS hyperbolic=sG hyperbolic and $(n-1)$-SKT hyperbolic=Gauduchon hyperbolic.
\item One can define \textit{astheno-Kähler hyperbolicity} in a similar way: We say that a compact astheno-Kähler manifold $X$ is \textbf{astheno-Kähler hyperbolic} if it admits an astheno-Kähler metric $\omega$ such that $\omega^{n-2}$ is ($\widetilde{\partial+\bar\partial}$)(bounded). Then:
\begin{center}
    Astheno-Kähler hyperbolic $\xLongrightarrow{\qquad}$ $(n-2)$-SKT hyperbolic
\end{center}
\item The hierarchy between $p$-Kähler, $p$-HS and $p$-SKT forms still holds in the hyperbolic context. Namely, for every $1\leq p\leq n-1$ we have: 
\begin{center}
$p$-Kähler hyperbolic $\xLongrightarrow{\qquad}$ $p$-HS hyperbolic $\xLongrightarrow{\qquad}$ $p$-SKT hyperbolic  
\end{center}
\item By arguments similar to those used in Propositions \ref{75}.(3) and \ref{76}, one can show that a Kähler hyperbolic manifold is $p$-Kähler hyperbolic, and every HS hyperbolic manifold is $p$-HS hyperbolic, respectively, for any $1\leq p\leq n-1$.
\end{enumerate} \label{82}
\end{rmk}
B.-L. Chen and X. Yang showed that a compact Kähler manifold homotopic to a compact negatively curved Riemannian manifold is Kähler hyperbolic (Theorem 1.1, \cite{chen2018compact}). This can be generalized to the $p$-Kähler hypebolic and $p$-HS hyperbolic cases for $1\leq p\leq n-1$, thanks to the following property:
\begin{prop}[Proposition 7.1, \cite{pansu1993introduction}, Lemma 3.2, \cite{chen2018compact}] Let $M$ be a complete simply connected Riemannian manifold with Riemannian sectional curvature $K\leq -c^2<0$. Then every closed bounded $k$-form on $M$ is d$($bounded$)$, for any $k\geq 2$. \label{77}
\end{prop}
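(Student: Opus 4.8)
The plan is to write down an explicit primitive of $\alpha$ by means of the geodesic contraction of $M$ onto a base point, and to control its sup-norm by a Jacobi-field comparison. Fix $o\in M$. By the Cartan--Hadamard theorem $\exp_o\colon T_oM\to M$ is a diffeomorphism, so the map $\Phi\colon[0,1]\times M\to M$, $\Phi(t,x):=\exp_o\!\big(t\exp_o^{-1}(x)\big)$, is a smooth homotopy between $\Phi_1=\mathrm{id}_M$ and the constant map $\Phi_0\equiv o$. Since $\alpha$ is closed, the usual homotopy formula gives
\[
\alpha=\Phi_1^*\alpha-\Phi_0^*\alpha=d(h\alpha),\qquad h\alpha:=\int_0^1\iota_{\partial_t}\Phi^*\alpha\,dt ,
\]
where $h\alpha$ is a smooth $(k-1)$-form on $M$ (smooth because $\Phi$ is smooth and $[0,1]$ is compact). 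So everything reduces to the uniform bound $\|h\alpha\|_{L^\infty}\le C(k,c)\,\|\alpha\|_{L^\infty}$.

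Fix $x\in M$, put $r:=d(o,x)$, and let $\gamma(s)=\exp_o\!\big(s\exp_o^{-1}(x)\big)$, $s\in[0,1]$, be the geodesic from $o$ to $x$ (of speed $r$), so that $\Phi_t(x)=\gamma(t)$ and $\partial_t\Phi(t,x)=\gamma'(t)$ has norm $r$. For $w\in T_xM$ write $w=w^{\mathrm{rad}}+w^{\mathrm{tan}}$, the decomposition into the component along $\gamma'(1)$ and its orthogonal complement. The Gauss lemma (applied to $\Phi_t=\exp_o\circ(t\cdot)\circ\exp_o^{-1}$) shows that $D\Phi_t(w^{\mathrm{rad}})$ is radial at $\gamma(t)$ of norm $t|w^{\mathrm{rad}}|$, hence proportional to $\gamma'(t)$; consequently, when we feed vectors into the alternating form $\iota_{\partial_t}\Phi^*\alpha$ — whose first slot already carries $\gamma'(t)$ — only the tangential parts $w^{\mathrm{tan}}$ contribute. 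A standard Jacobi-field computation identifies $D\Phi_t(w^{\mathrm{tan}})$ with $J_w(t)$, where $J_w$ is the Jacobi field along $\gamma$ with $J_w(0)=0$, $J_w(1)=w^{\mathrm{tan}}$ and $J_w\perp\gamma'$. Since $K\le -c^2$, the Rauch comparison theorem (after reparametrizing $\gamma$ by arclength) makes $s\mapsto |J_w(s)|/\sinh(crs)$ non-decreasing, whence
\[
|D\Phi_t(w^{\mathrm{tan}})|=|J_w(t)|\le\frac{\sinh(crt)}{\sinh(cr)}\,|w^{\mathrm{tan}}|,\qquad 0\le t\le 1 .
\]
As $k-1\ge 1$ of the slots of $\iota_{\partial_t}\Phi^*\alpha$ are filled by such tangential vectors while the $\partial_t$-slot contributes the factor $|\gamma'(t)|=r$, boundedness of $\alpha$ gives the pointwise estimate
\[
|(h\alpha)_x|\ \le\ \|\alpha\|_{L^\infty}\int_0^1 r\Big(\tfrac{\sinh(crt)}{\sinh(cr)}\Big)^{k-1}dt\ =\ \|\alpha\|_{L^\infty}\int_0^r\Big(\tfrac{\sinh(cs)}{\sinh(cr)}\Big)^{k-1}ds ,
\]
after the change of variable $s=rt$.

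Finally I would check that this last integral is bounded independently of $r\ge 0$: it is continuous, hence bounded for $r$ in any compact interval, while for $r$ large one uses $\sinh(cr)\ge\frac14 e^{cr}$ and $\sinh(cs)\le\frac12 e^{cs}$ to bound it by $\int_0^r\big(2e^{c(s-r)}\big)^{k-1}ds\le \frac{2^{k-1}}{c(k-1)}$, which is finite exactly because $k-1\ge 1$. This yields $\|h\alpha\|_{L^\infty}<\infty$ and $\alpha=d(h\alpha)$, so $\alpha$ is $d($bounded$)$. The one delicate point is precisely this step — the Jacobi-field comparison together with the uniform-in-$r$ integrability — and it is here that both hypotheses are genuinely used: $K\le -c^2<0$ produces the exponential decay $\sinh(crt)/\sinh(cr)\sim e^{-c(1-t)r}$ of the tangential differential, and $k\ge 2$ guarantees at least one tangential slot, so that this decay absorbs the linear growth $|\gamma'(t)|=r$; for $k=1$ the statement is false (e.g.\ the $1$-form $dr$ on real hyperbolic space is bounded and closed but admits no bounded primitive).
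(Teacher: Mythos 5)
Your argument is correct and is precisely the standard proof of this result; the paper itself gives no proof but cites Pansu (Proposition 7.1) and Chen--Yang (Lemma 3.2), and your geodesic contraction to a base point, the homotopy-operator primitive $h\alpha=\int_0^1\iota_{\partial_t}\Phi^*\alpha\,dt$, and the Rauch/Jacobi-field estimate $|J(t)|\le\frac{\sinh(crt)}{\sinh(cr)}|J(1)|$ absorbing the factor $|\gamma'(t)|=r$ when $k\ge 2$ is exactly the argument in those references. Your closing remark on the failure for $k=1$ correctly identifies why the hypothesis $k\ge 2$ is essential.
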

Note that such a manifold $M$ cannot be compact since the Cartan–Hadamard theorem states that a complete simply connected Riemannian manifold with non-positive sectional curvature is diffeomorphic to $\R^{\dim M}$ via the exponential map at any point.
\begin{thm} Let $X$ be a compact $p$-Kähler $($resp. $p$-HS$)$ manifold with $\dim_\C X=n$, homotopic to a compact Riemannian manifold $(Y,g_Y)$ with negative Riemannian sectional curvature, with $1\leq p\leq n-1$. Then $X$ is $p$-Kähler hyperbolic $($resp. $p$-HS hyperbolic$)$. \label{79}
\end{thm}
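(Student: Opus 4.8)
The plan is to transplant the cohomology class of the given $p$-Kähler (resp. $p$-HS) form from $X$ to the negatively curved model $Y$ via the homotopy equivalence, use the curvature hypothesis on $Y$ to produce a primitive there, and pull it back. First I would fix a homotopy equivalence $\phi:X\longrightarrow Y$ with homotopy inverse $\psi:Y\longrightarrow X$, so that $\psi\circ\phi\simeq\mathrm{id}_X$ and $\phi\circ\psi\simeq\mathrm{id}_Y$; these induce mutually inverse isomorphisms on de Rham cohomology. Let $\Omega$ be the given $d$-closed $2p$-form on $X$ (in the $p$-HS case) with $\Omega^{p,p}$ transverse, or the $d$-closed transverse $(p,p)$-form (in the $p$-Kähler case). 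I want to show $\Omega$ is $\widetilde d$(bounded), i.e. its pullback to the universal cover $\widetilde X$ is $d$ of a bounded form.

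The key point is that by Lemma 2.11 ($\widetilde d$-boundedness is a cohomological property) it suffices to exhibit \emph{some} form in the de Rham class $\{\Omega\}\in H^{2p}_{DR}(X,\mathbb R)$ that is $\widetilde d$(bounded). Consider $\psi^*(\phi^*\Omega)$: this lies in the same class as $\Omega$ on $X$ since $\psi\circ\phi\simeq\mathrm{id}_X$. Now $\phi^*\Omega$ is a closed form on $Y$; since $Y$ is compact it is bounded with respect to $g_Y$, and since $Y$ is negatively curved, its universal cover $\widetilde Y$ is complete, simply connected with sectional curvature $K\le -c^2<0$, so Proposition~\ref{77} applies (note $2p\ge 2$): the pullback of $\phi^*\Omega$ to $\widetilde Y$ equals $d\gamma$ for a $\widetilde{g_Y}$-bounded $(2p-1)$-form $\gamma$. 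Thus $\phi^*\Omega$ is $\widetilde d$(bounded) on $Y$.

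Next I would check that $\widetilde d$-boundedness is preserved under pullback along a smooth map of compact manifolds: if $\psi:Y\longrightarrow X$ is smooth, it lifts to $\widetilde\psi:\widetilde Y'\longrightarrow\widetilde X$ on suitable covers (or one argues directly: the lift exists because $\widetilde Y$ is simply connected, so $\psi\circ\pi_Y$ lifts through $\pi_X$), and $\widetilde{\psi^*\eta}=\widetilde\psi^*\widetilde\eta=\widetilde\psi^*d\gamma=d(\widetilde\psi^*\gamma)$; since $\psi$ is a smooth map between compact manifolds its differential is uniformly bounded, so $\widetilde\psi^*\gamma$ is bounded on $\widetilde X$ for the pulled-back (hence any) metric. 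Hence $\psi^*(\phi^*\Omega)$ is $\widetilde d$(bounded) on $X$, and being cohomologous to $\Omega$, so is $\Omega$ by Lemma~\ref{77}'s cohomological nature. Finally, $\Omega$ itself still has $\Omega^{p,p}$ transverse by hypothesis, so it is a $\widetilde d$(bounded) $p$-HS form (resp. $p$-Kähler form), which is exactly what $p$-HS hyperbolicity (resp. $p$-Kähler hyperbolicity) demands.

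The main obstacle I anticipate is the lifting/boundedness bookkeeping for $\psi$: one must ensure the homotopy equivalence can be taken smooth (standard, by smooth approximation) and that the comparison of metrics on $\widetilde X$ — between the pulled-back metric $\psi^*g_Y$ (which may be degenerate if $\psi$ is not an immersion) and a genuine Hermitian metric on $X$ — does not obstruct the boundedness estimate. The resolution is that boundedness of $\widetilde\psi^*\gamma$ follows from pointwise operator-norm bounds on $d\psi$ together with compactness of $X$, and the notion ``bounded form on $\widetilde X$'' is independent of the choice of metric that descends from $X$; so no nondegeneracy of $\psi$ is needed. One should also record that in the $p$-Kähler case the argument is identical, only with $2p$ replaced by the bidegree $(p,p)$ throughout, and that transversality of $\Omega^{p,p}$ is intrinsic to $\Omega$ and unaffected by the cohomological replacement.
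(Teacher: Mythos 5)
Your proposal is correct and follows essentially the same route as the paper: transport the de Rham class to the negatively curved model via the homotopy equivalence, apply the Pansu/Chen--Yang $d$(boundedness) result on the universal cover of $Y$, pull the bounded primitive back, and conclude by the cohomological invariance of $\widetilde d$(boundedness) (which the paper carries out by hand via $\Omega=f_1^*(f_2^*\Omega')+du$ rather than by citing the lemma). The only blemish is notational: with your conventions $\phi:X\to Y$, $\psi:Y\to X$, the form living on $Y$ is $\psi^*\Omega$ and the relevant composite is $\phi^*(\psi^*\Omega)=(\psi\circ\phi)^*\Omega$, not $\psi^*(\phi^*\Omega)$; the argument as described in words is the intended correct one.
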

\begin{proof} We will spell out the proof in the $p$-HS case, the $p$-Kähler case can be handled in a similar manner. Let $\Omega$ be a real $d$-closed $2p$-form on $X$ such that $\Omega^{p,p}$ is weakly strictly positive, i.e. $\Omega^{p,p}$ is a $p$-HS form. The goal is to show that $\Omega$ is $\widetilde{d}$(bounded). Let $\pi_X:\widetilde{X}\longrightarrow X$ and $\pi_Y:\widetilde{Y}\longrightarrow Y$ be the universal coverings of $X$ and $Y$ respectively. Since $X$ and $Y$ are homotopic, there exist two smooth maps $f_1:X\longrightarrow Y$ and $f_2:Y\longrightarrow X$ such that the induced map $(f_2\circ f_1)^*$ on $H^*_{DR}(X,\R)$ is the identity. Thus for any $\Omega'\in[\Omega]\in H^{2p}_{DR}(X,\R)$, there exists a ($2p-1$)-form $u$ in $X$ such that: 
\begin{equation}
    \Omega=\Omega'+du=(f_2\circ f_1)^*\Omega'+du=f_1^*(f_2^*\Omega')+du \label{78}
\end{equation}
Moreover, $f_2^*\Omega'$ is a $d$-closed $2p$-form on $Y$, hence $\widetilde{d}$(bounded) thanks to Proposition \ref{77}, that is $(\pi_Y^*\circ f_2^*)\Omega'=d\eta$ for some $\pi_Y^*g_Y$-bounded $(2p-1)$-form $\eta$ on $\widetilde{Y}$. Let $\widetilde{f_1}:\widetilde{X}\longrightarrow\widetilde{Y}$ and $\widetilde{f_2}:\widetilde{Y}\longrightarrow\widetilde{X}$ be the liftings of $f_1$ and $f_2$ respectively such that the following diagram:
\begin{equation*}
\begin{tikzcd}
\widetilde{X} 
  \arrow[r, "\widetilde{f_1}"] 
  \arrow[d, "\pi_X"'] 
& 
\widetilde{Y} 
  \arrow[r, "\widetilde{f_2}"] 
  \arrow[d, "\pi_Y"'] 
&
\widetilde{X}
\arrow [d, "\pi_X"]
\\
X 
  \arrow[r, "f_1"'] 
& 
Y 
  \arrow[r, "f_2"'] 
&
X 
\end{tikzcd}
\end{equation*}
commutes. Hence, by lifting the formula (\ref{78}) to the universal cover, we get:
\begin{align}
    \pi_X^*\Omega&=(\pi_X^*\circ f_1^*\circ f_2^*)(\Omega')+\pi_X^*(du)=\widetilde{f_1}^*\circ(\pi_Y^*\circ f_2^*)(\Omega')+d(\pi_X^*u) \\ &=\widetilde{f_1}^*(d\eta)+d(\pi_X^*u)=d(\widetilde{f_1}^*\eta+\pi_X^*u)
\end{align}
Fix a Riemannian metric $g_X$ on $X$. Then:
\begin{align}
    \lVert \widetilde{f_1}^*\eta+\pi_X^*u\rVert_{L^\infty_{\pi_X^*g_X}(\widetilde{X})}&\leq \lVert\widetilde{f_1}^*\eta\rVert_{L^\infty_{\pi_X^*g_X}(\widetilde{X})}+\lVert\pi_X^*u\rVert_{L^\infty_{\pi_X^*g_X}(\widetilde{X})}\\ 
    &\leq C\lVert f_1\rVert^{2p-1}_{\mathscr{C}^1_{g_X,g_Y}(X,Y)}\lVert\eta\rVert_{L^\infty_{\pi_X^*g_X}(\widetilde{X})}+\lVert u\rVert_{L^\infty_{g_X}(X)}<+\infty
\end{align}
since $X$ and $Y$ are compact (so $\pi_X$ and $\pi_Y$ are local isometries), where $C>0$ depends only on $X$ and $p$. Hence $\Omega$ is $\widetilde{d}$(bounded) and $X$ is $p$-HS hyperbolic.
\end{proof}
\begin{rmk}
\begin{enumerate}
    \item Theorem~\ref{79} is not relevant when the manifold $X$ itself admits a Hermitian metric $\omega$ with negative sectional curvature (for the \textbf{Chern connection}), since this implies the negativity of the first Chern-Ricci curvature form of $\omega$. Then, the ($1,1$)-form: 
    \begin{equation*}
    \eta:=-\mathrm{Ric}^{(1)}(\omega)=i\Theta_\omega(K_X)\underset{\mathrm{loc}}{=}i\partial\bar{\partial}\log(\omega^n).
    \end{equation*}
    defines a Kähler form on $X$. In particular, $X$ is \emph{Kähler hyperbolic} since ($X,\omega$) is diffeomorphic to ($X,\eta$) and are therefore homotopic. Hence Theorem~\ref{79} applies.
    \item The case $p=1$ in Theorem~\ref{79} is already of particular interest. Indeed, constructing a compact non-Kähler Hermitian–symplectic manifold that is homotopic to a compact Riemannian manifold with $K<0$ would provide an example of a non-Kähler Kobayashi hyperbolic manifold, assuming an affirmative answer to the question raised by J.~Streets and G.~Tian in Question~1.7 of \cite{streets2010parabolic} concerning the existence of compact non-Kähler Hermitian–symplectic manifolds. Such a scenario is, of course, widely regarded as unlikely in view of the Kobayashi conjecture, which predicts the ampleness of the canonical bundle $K_X$ for any compact Kobayashi hyperbolic manifold $X$. Nevertheless, the existence of a compact non-Kähler Kobayashi hyperbolic manifold would simply indicate that the Kähler assumption in the Kobayashi conjecture must be made explicit.
\end{enumerate}
\end{rmk}
\begin{pbm} 
Motivated by the search for non-Kähler Kobayashi hyperbolic manifolds, we ask whether Theorem~1.1 in \cite{broder2023general} can be strengthened in the non-Kähler setting: that is, whether a compact complex manifold carrying an SKT metric with negative holomorphic sectional curvature ($\text{HSC}<0$) must be SKT hyperbolic. Recall that any compact Hermitian manifold with $\text{HSC}<0$ is Kobayashi hyperbolic by \cite{kobayashi2005hyperbolic}, Theorem~4.1. As recently pointed out to the author by J.~Streets, the negativity of the \emph{real} bisectional curvature already forces a compact Hermitian manifold to be Kähler with ample canonical bundle (see \cite{lee2021complex}, Theorem~1.1), indicating that even slight strengthening of the "$\text{HSC}<0$" assumption may preclude genuinely non-Kähler examples. Note, for example, that a compact complex manifold with negative holomorphic bisectional curvature ($\mathrm{HBC}<0$) need not be K\"ahler SKT hyperbolic. Indeed, manifolds of the latter type have infinite fundamental group, since they are $L^2$-K\"ahler hyperbolic and therefore have generically large fundamental
group by Corollary~2.12 in~\cite{khelifati2025holomorphic}. In contrast, there exist
simply connected Kähler manifolds with $\mathrm{HBC}<0$, arising for instance
as complete intersections in complex projective manifolds \cite{mohsen2022construction}.

In \cite{kasuya2025partially}, Section §.5, it was observed that one does not need to assume negativity of the sectional curvature; instead, a weaker curvature-type condition introduced in the context of \emph{pluriclosed star split} manifolds \cite{popovici2023pluriclosed} suffices to obtain hyperbolicity results in several settings. This suggests the possibility of exploring whether analogous conditions could imply other forms of hyperbolicity for compact complex manifolds, possibly in a partial sense. For instance, one may ask whether such curvature-type hypotheses could lead to a partial hyperbolicity of Hermitian–Symplectic, SKT, strongly Gauduchon, or Gauduchon type. Since partial hyperbolicity is weaker and more flexible than full Kobayashi hyperbolicity, it could potentially allow for a wider range of non-Kähler examples, opening the door to new constructions and examples in this direction.
\end{pbm}
Building upon the ideas of Theorem~5.5 in~\cite{alessandrini2017product} and Proposition~3.5 in~\cite{haggui2023compact}, we derive the following results:
\begin{prop} Let $(X,\omega)$ be a compact Kähler manifold with $\dim_\C X=n$, and $Y$ be compact complex manifold with $\dim_\C Y=m$. Fix $1\leq q<m$.
\begin{enumerate}
    \item If $Y$ is $p$-HS $($resp. $p$-HS hyperbolic$)$ for all $q\leq p<m$, then $X\times Y$ is $(p+n)$-HS $($resp. $(p+n)$-HS hyperbolic$)$ for all $q\leq p<m$.
    \item If $Y$ is $p$-SKT $($resp. $p$-SKT hyperbolic$)$ for all $q\leq p<m$, then $X\times Y$ is $(p+n)$-SKT $($resp. $(p+n)$-SKT hyperbolic$)$ for all $q\leq p<m$.
\end{enumerate}
\end{prop}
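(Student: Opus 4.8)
The plan is to build the required form on $X\times Y$ directly, by ``multiplying'' the Kähler structure of $X$ with the hierarchy of $p$-HS (resp.\ $p$-SKT) forms supplied by the hypothesis on $Y$, in the spirit of Theorem~5.5 in~\cite{alessandrini2017product} and Proposition~3.5 in~\cite{haggui2023compact}. Write $\pi_X:X\times Y\to X$ and $\pi_Y:X\times Y\to Y$ for the projections. For each $j$ with $p\le j\le\min(n+p,m)$ fix a real $d$-closed $2j$-form $\Omega_j$ on $Y$ whose $(j,j)$-component is transverse: for $j<m$ this is a $j$-HS form, which exists since $j\ge p\ge q$, and for $j=m$ we simply take a positive volume form, automatically $d$-closed. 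Fixing $p$ with $q\le p<m$, I would set
\[
\Theta\;:=\;\sum_{j=p}^{\min(n+p,m)}c_j\,(\pi_X^*\omega)^{n+p-j}\wedge\pi_Y^*\Omega_j,\qquad c_j>0\ \text{arbitrary}.
\]
Since $\omega$ is Kähler, $\pi_X^*\omega$ is a smooth, real, semipositive, $d$-closed $(1,1)$-form on $X\times Y$; hence every summand is $d$-closed of pure bidegree $(p+n,p+n)$, so $\Theta$ is a real $d$-closed $2(p+n)$-form with $\Theta^{p+n,p+n}=\Theta$. Everything thus reduces to (i) checking that $\Theta$ is transverse, and (ii) upgrading closedness to $\widetilde d$- (resp.\ $(\widetilde{\partial+\bar\partial})$-) boundedness in the hyperbolic statements.

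For (i) I would argue pointwise. Fix $(x,y)\in X\times Y$ and identify $T_{(x,y)}(X\times Y)=E_1\oplus E_2$ with $E_1=T_xX$, $E_2=T_yY$; given linearly independent $(1,0)$-forms $\beta_1,\dots,\beta_{m-p}$, the sign of $\Theta\wedge\bigwedge_\ell i\beta_\ell\wedge\bar\beta_\ell$ is governed by the restriction of $\Theta$ to $S:=\bigcap_\ell\ker\beta_\ell$, a complex subspace with $\dim_\C S=n+p$. Since $(\pi_X^*\omega)|_S$ is semipositive with kernel $S\cap E_2$, the power $(\pi_X^*\omega)^{n+p-j}|_S$ is nonzero only for $j\ge d_2:=\dim_\C(S\cap E_2)$, and $p\le d_2\le\min(n+p,m)$ by the Grassmann dimension inequalities, so $j=d_2$ is among our indices. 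For that index $n+p-d_2=\dim_\C\pi_X(S)$, so $(\pi_X^*\omega)^{n+p-d_2}|_S$ is a positive multiple of the pullback of the volume form of $\pi_X(S)\cong S/(S\cap E_2)$; and, modulo the directions already exhausted by this factor, $\pi_Y^*\Omega_{d_2}|_S$ reduces to $\Omega_{d_2}^{d_2,d_2}|_{S\cap E_2}$, which is a strictly positive volume form on the $d_2$-dimensional space $S\cap E_2$ because a transverse $(d_2,d_2)$-form restricts to a $d_2$-dimensional subspace as such (\cite{demailly1997complex}, Ch.~III). Hence the $j=d_2$ summand restricts to $S$ as a strictly positive volume form, while every other summand, being the wedge of a strongly positive and a weakly positive form, contributes a nonnegative multiple of the ambient volume form; therefore $\Theta\wedge\bigwedge_\ell i\beta_\ell\wedge\bar\beta_\ell>0$, $\Theta$ is transverse, and $X\times Y$ is $(p+n)$-HS. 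For the $p$-SKT variant the $\Omega_j$ are only assumed $\partial\bar\partial$-closed; since $\pi_X^*\omega$ is both $\partial$- and $\bar\partial$-closed one gets $\partial\bar\partial\Theta=\sum_j\pm\, c_j(\pi_X^*\omega)^{n+p-j}\wedge\pi_Y^*(\partial\bar\partial\Omega_j)=0$, and the transversality argument is word for word the same.

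For (ii) I would pass to universal covers, using $\widetilde{X\times Y}=\widetilde X\times\widetilde Y$ equipped with the product metric induced by the lift $\widetilde\omega$ of $\omega$ on $\widetilde X$ and a fixed lift of a Hermitian metric $g_Y$ on $Y$. If $Y$ is $j$-HS hyperbolic for all $q\le j<m$, then $\widetilde{\Omega_j}=d\eta_j$ with $\eta_j$ bounded on $\widetilde Y$, and since $\pi_X^*\widetilde\omega$ is $d$-closed,
\[
(\pi_X^*\widetilde\omega)^{n+p-j}\wedge\pi_Y^*\widetilde{\Omega_j}=d\bigl((\pi_X^*\widetilde\omega)^{n+p-j}\wedge\pi_Y^*\eta_j\bigr),
\]
with bounded primitive because $(\pi_X^*\widetilde\omega)^{n+p-j}$ has constant pointwise norm (pullback of a compact Kähler metric) and $\pi_Y^*\eta_j$ is bounded. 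Summing over $j<m$ yields $\widetilde\Theta=d\Gamma$ with $\Gamma$ bounded, so $X\times Y$ is $(p+n)$-HS hyperbolic; the $p$-SKT-hyperbolic case is identical with $\widetilde{\Omega_j}=\partial\alpha_j+\bar\partial\beta_j$ bounded and $\partial,\bar\partial$ redistributed across the wedge (again using that $\pi_X^*\widetilde\omega$ is $\partial$- and $\bar\partial$-closed). The step I expect to be the main obstacle is the pointwise count in (i) — identifying exactly which summand is strictly positive over each subspace $S$, which is precisely the point where the full-range hypothesis ``for all $q\le p<m$'' is consumed — together with the boundary index $j=m$: it occurs only when $\dim X+p\ge\dim Y$, and for the hyperbolic conclusions one then needs, in addition, that the volume form of $Y$ lifts to a $\widetilde d$-bounded form on $\widetilde Y$. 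When $\dim X+p<\dim Y$ this term is absent and the argument is complete as written; in the remaining range it should be extracted from the fact that $Y$ is in particular $(m-1)$-HS hyperbolic, but making this rigorous is the delicate point.
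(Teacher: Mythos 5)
Your construction coincides with the paper's: your $\Theta$ is exactly the paper's $\Theta_{p+n}=\sum_{h\le k\le n}\mathrm{pr}_X^*\omega^{k}\wedge\mathrm{pr}_Y^*\Omega_{p+n-k}$ with $h=\max\{0,p+n-m\}$ (up to your harmless coefficients $c_j$), the lifting-to-universal-covers boundedness step is identical, and your pointwise transversality analysis via the subspaces $S$ and the index $d_2=\dim_\C(S\cap E_2)$ is in fact more careful than the paper's, which disposes of that point by merely citing Demailly's Propositions III.1.11--1.12. The one issue you flag as delicate --- the boundary index $j=m$, which enters whenever $n+p\ge m$ and, for the hyperbolic conclusions, would require a $\widetilde{d}$-bounded (resp.\ $(\widetilde{\partial+\bar\partial})$-bounded) positive $(m,m)$-form on $Y$ --- is present but silently unaddressed in the paper as well, since its sum includes the term $k=h=j-m$ involving an $\Omega_m$ that is never defined there; so the gap you honestly isolate is a gap of the published argument rather than a defect of your route relative to it.
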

\begin{proof} Let $\pi_X:\widetilde{X}\longrightarrow X$ and $\pi_Y:\widetilde{Y}\longrightarrow Y$ be the universal coverings of $X$ and $Y$, respectively. Let $\text{pr}_X:X\times Y\longrightarrow X$ and $\text{pr}_Y:X\times Y\longrightarrow Y$ be the projections onto $X$ and $Y$, respectively. The projections $\text{pr}_X$ and $\text{pr}_Y$ lift to projections $\widetilde{\text{pr}_X}:\widetilde{X}\times\widetilde{Y}\longrightarrow\widetilde{X}$ and $\widetilde{\text{pr}_Y}:\widetilde{X}\times\widetilde{Y}\longrightarrow\widetilde{Y}$. In particular, the following diagram commutes:
\begin{equation*}
\begin{tikzcd}
\widetilde{X} 
    \arrow[d, "\pi_X"']
  & 
\widetilde{X}\times\widetilde{Y} 
    \arrow[l, "\widetilde{\mathrm{pr}}_X"'] 
    \arrow[r, "\widetilde{\mathrm{pr}}_Y"] 
    \arrow[d, "\pi"] 
  &
\widetilde{Y}
    \arrow[d, "\pi_Y"]
\\
X 
  &
X\times Y 
    \arrow[l, "\mathrm{pr}_X"] 
    \arrow[r, "\mathrm{pr}_Y"'] 
  &
Y
\end{tikzcd}
\end{equation*}
\begin{enumerate}
    \item For $q\leq p<n$, we let $\Omega_p$ be a real $d$-closed $2p$-form on $Y$ such that $\Omega^{p,p}$ is weakly strictly positive, i.e. $\Omega^{p,p}$ is a $p$-HS form. Fix $q+n\leq j<m+n$ and let $h=\max\{0,j-m\}$. We set:
    \begin{equation}
        \Theta_j:=\underset{h\leq k\leq n}{\sum}\text{pr}_X^*\omega^k\wedge\text{pr}_Y^*\Omega_{j-k} \label{80}
    \end{equation}
    Then $\Theta_j$ is obviously a real $d$-closed $2j$-form on $X\times Y$. Thanks to Propositions 1.11 and 1.12 in \cite{demailly1997complex}, §.1, Chapter III, we infer that:
    \begin{equation}
        \Theta_j^{j,j}=\underset{h\leq k\leq n}{\sum}\text{pr}_X^*\omega^k\wedge\text{pr}_Y^*\Omega^{j-k,j-k}_{j-k}
    \end{equation}
    is weakly strictly positive. Hence $\Theta_j^{j,j}$ defines a $j$-HS form on $X\times Y$. If moreover, we assume the $p$-forms $\Omega_p$ are $\widetilde{d}$(bounded), i.e. $\forall q\leq p<m$, there exists a bounded ($2p-1$)-form $\eta_p$ in $\widetilde{Y}$ such that $\pi_Y^*\Omega_p=d\eta_p$. By pulling-back the expression (\ref{80}) to $\widetilde{X}\times\widetilde{Y}$, we get:
    \begin{align*}
        \pi^*\Theta_j&=\underset{h\leq k\leq n}{\sum}(\pi^*\circ\text{pr}_X^*)\omega^k\wedge(\pi^*\circ\text{pr}_Y^*)\Omega_{j-k} \\
        &=\underset{h\leq k\leq n}{\sum}\widetilde{\text{pr}_X}^*\widetilde{\omega}^k\wedge\widetilde{\text{pr}_Y}^*(\pi_Y^*\Omega_{j-k}) \\ &=\underset{h\leq k\leq n}{\sum}\widetilde{\text{pr}_X}^*\widetilde{\omega}^k\wedge d(\widetilde{\text{pr}_Y}^*\eta_{j-k}) \\
        &=d\left(\underset{h\leq k\leq n}{\sum}\widetilde{\text{pr}_X}^*\widetilde{\omega}^k\wedge \widetilde{\text{pr}_Y}^*\eta_{j-k}\right) \qquad\quad \text{since $\omega$ is $d$-closed}
    \end{align*}
    Hence for any $q+n\leq j<m+n$, $\Theta_j$ is $\widetilde{d}$(bounded).
    \item For $q\leq p<n$, we let $\Omega_p$ be a $p$-SKT form on $Y$, and we fix $q+n\leq j<m+n$. Set $h=\max\{0,j-m\}$, then the ($j,j$)-form:
    \begin{equation}
        \Theta_j:=\underset{h\leq k\leq n}{\sum}\text{pr}_X^*\omega^k\wedge\text{pr}_Y^*\Omega_{j-k} \label{81}
    \end{equation}
    defines a $j$-SKT form on $X\times Y$, again thanks to Propositions 1.11 and 1.12 in \cite{demailly1997complex}, §.1, Chapter III, and the fact $\omega$ is both $\partial$ and $\bar\partial$-closed. If moreover, the ($p,p$)-forms $\Omega_p$ are ($\widetilde{\partial+\bar\partial}$)(bounded), i.e. $\forall q\leq p<m$, there exist a bounded ($p-1,p$)-form $\alpha_p$ and a bounded ($p,p-1$)-form $\beta_p$ in $\widetilde{Y}$ such that $\pi_Y^*\Omega_p=\partial\alpha_p+\bar\partial\beta_p$, then:
    \begin{align*}
      \pi^*\Theta_j&=\underset{h\leq k\leq n}{\sum}(\pi^*\circ\text{pr}_X^*)\omega^k\wedge(\pi^*\circ\text{pr}_Y^*)\Omega_{j-k} \\
        &=\underset{h\leq k\leq n}{\sum}\widetilde{\text{pr}_X}^*\widetilde{\omega}^k\wedge\widetilde{\text{pr}_Y}^*(\pi_Y^*\Omega_{j-k}) \\ &=\underset{h\leq k\leq n}{\sum}\widetilde{\text{pr}_X}^*\widetilde{\omega}^k\wedge (\partial(\widetilde{\text{pr}_Y}^*\alpha_{j-k})+\bar\partial(\widetilde{\text{pr}_Y}^*\beta_{j-k})) \\
        &=\partial\left(\underset{h\leq k\leq n}{\sum}\widetilde{\text{pr}_X}^*\widetilde{\omega}^k\wedge \widetilde{\text{pr}_Y}^*\alpha_{j-k}\right)+\bar\partial\left(\underset{h\leq k\leq n}{\sum}\widetilde{\text{pr}_X}^*\widetilde{\omega}^k\wedge \widetilde{\text{pr}_Y}^*\beta_{j-k}\right) 
    \end{align*}
    Hence $\Theta_j$ is a $j$-SKT hyperbolic form on $X\times Y$, for all $q+n\leq j<m+n$.
\end{enumerate}
\end{proof}
The analogue of Proposition~3.8 in \cite{haggui2023compact} remains valid in the $p$-HS and $p$-SKT hyperbolic contexts. Namely:
\begin{prop} Let $X$ be a $p$-HS $($resp. $p$-SKT$)$ hyperbolic manifold with $\dim_\C X=n$, and $S$ be submanifold of $X$ with $\dim_\C S>p$. Then $S$ is $p$-HS $($resp. $p$-SKT$)$ hyperbolic too.
\end{prop}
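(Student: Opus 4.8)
The plan is to restrict the ambient hyperbolic form to $S$ and to check that each of the three defining features---realness together with $d$-closedness (resp.\ $\partial\bar\partial$-closedness), transversality of the relevant $(p,p)$-component, and $\widetilde d$(boundedness) (resp.\ $(\widetilde{\partial+\bar\partial})$(boundedness))---is inherited by the restriction. Write $\iota:S\hookrightarrow X$ for the inclusion; as $X$ is compact and $S$ is a closed complex submanifold, $S$ is again a compact complex manifold, and the hypothesis $\dim_\C S>p$ gives $1\le p\le\dim_\C S-1$, so that the notions involved make sense on $S$. I would also fix a Hermitian metric $g_X$ on $X$, endow $S$ with $g_S:=\iota^*g_X$ (so that $\iota$ becomes an isometric embedding), and equip $\widetilde X,\widetilde S$ with the pulled-back metrics, turning $\pi:\widetilde X\to X$ and $\pi_S:\widetilde S\to S$ into local isometries.

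Consider first the $p$-HS case: let $\Omega$ be a real $d$-closed $2p$-form on $X$ with $\Omega^{p,p}$ transverse and with $\pi^*\Omega=d\eta$ for a bounded $(2p-1)$-form $\eta$ on $\widetilde X$. Then $\iota^*\Omega$ is real and $d$-closed, and since $\iota$ is holomorphic pullback preserves bidegree, whence $(\iota^*\Omega)^{p,p}=\iota^*(\Omega^{p,p})$. The only step that needs an argument is that $\iota^*(\Omega^{p,p})$ is still transverse on $S$: this is a pointwise statement, proved by taking linearly independent $(1,0)$-covectors $\alpha_1,\dots,\alpha_{\dim_\C S-p}$ on $T_xS$, extending them to $(1,0)$-covectors on $T_xX$, completing with a basis of the conormal space to reach $n-p$ linearly independent $(1,0)$-covectors on $T_xX$, invoking transversality of $\Omega^{p,p}$ at $x$, and factoring the volume form of $X$ through a splitting $T_x^{1,0}X=T_x^{1,0}S\oplus(\text{complement})$; this is precisely where $\dim_\C S>p$ enters (compare §.1 of Chapter III in \cite{demailly1997complex} and Proposition 3.8 of \cite{haggui2023compact}).

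Next I would build the bounded primitive over the universal cover of $S$. Since $\widetilde S$ is simply connected, the map $\iota\circ\pi_S:\widetilde S\to X$ lifts through $\pi$ to a holomorphic map $\widetilde\iota:\widetilde S\to\widetilde X$ with $\pi\circ\widetilde\iota=\iota\circ\pi_S$; locally $\widetilde\iota=\pi^{-1}\circ\iota\circ\pi_S$, so $\widetilde\iota$ is a holomorphic immersion, and $\widetilde\iota^*\widetilde g_X=\pi_S^*\iota^*g_X=\pi_S^*g_S$, i.e.\ $\widetilde\iota$ is an isometric immersion. Pulling back $\pi^*\Omega=d\eta$ by $\widetilde\iota$ yields
\[
\pi_S^*(\iota^*\Omega)=\widetilde\iota^*(\pi^*\Omega)=d(\widetilde\iota^*\eta),
\]
and since pullback by an isometric immersion does not increase the pointwise norm of a form, $\lVert\widetilde\iota^*\eta\rVert_{L^\infty_{\pi_S^*g_S}(\widetilde S)}\le\lVert\eta\rVert_{L^\infty_{\widetilde g_X}(\widetilde X)}<\infty$. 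Hence $\iota^*\Omega$ is a $\widetilde d$(bounded) $p$-HS form and $S$ is $p$-HS hyperbolic.

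The $p$-SKT case runs along the same lines: if $\Omega$ is a $\partial\bar\partial$-closed transverse $(p,p)$-form on $X$ with $\pi^*\Omega=\partial\alpha+\bar\partial\beta$ for bounded $\alpha\in\mathscr{C}^\infty_{p-1,p}(\widetilde X)$, $\beta\in\mathscr{C}^\infty_{p,p-1}(\widetilde X)$, then $\iota^*\Omega$ is again transverse (by the same pointwise argument, using $\dim_\C S>p$) and $\partial\bar\partial$-closed (as $\iota^*$ commutes with $\partial$ and $\bar\partial$), while pulling back by the holomorphic isometric immersion $\widetilde\iota$ gives $\pi_S^*(\iota^*\Omega)=\partial(\widetilde\iota^*\alpha)+\bar\partial(\widetilde\iota^*\beta)$ with $\widetilde\iota^*\alpha,\widetilde\iota^*\beta$ bounded by the same norm comparison; hence $S$ is $p$-SKT hyperbolic. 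I expect the only genuinely non-routine point to be the preservation of transversality of the $(p,p)$-component under restriction to $S$---the hypothesis $\dim_\C S>p$ being exactly what saves it---; everything else reduces to the commuting relation $\pi\circ\widetilde\iota=\iota\circ\pi_S$ together with the fact that isometric immersions act norm-nonincreasingly on forms.
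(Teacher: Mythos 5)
Your proposal is correct and follows essentially the same route as the paper: restrict $\Omega$ to $S$, note that transversality survives restriction to a submanifold of dimension $>p$ (the paper cites Proposition 1.12 of Chapter III in \cite{demailly1997complex}, where you spell out the pointwise covector argument), lift $\iota$ to a holomorphic map $\widetilde\iota:\widetilde S\to\widetilde X$ commuting with the covering projections, and pull back the bounded primitive(s), using that pullback by the (isometric) immersion does not increase $L^\infty$ norms. Your treatment of $\widetilde\iota$ as an immersion rather than an inclusion is a slightly more careful phrasing of the same step, but the argument is the one the paper gives.
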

\begin{proof} As the argument proceeds in the same manner as in the $p$-Kähler hyperbolic case, we shall restrict our attention to the $p$-SKT hyperbolic case, leaving the analogous proof for the $p$-HS hyperbolic case to the reader. Let $\iota:S\xhookrightarrow{\;\;\;\;\;\;}X$ be the natural inclusion, and let $\pi_X:\widetilde{X}\longrightarrow X$ and $\pi_S:\widetilde{S}\longrightarrow S$ be the universal coverings of $X$ and $S$ respectively. Then $\iota$ lifts to a natural inclusion on universal covers:
\begin{equation*}
\begin{tikzcd}
\widetilde{S}  
    \arrow[r, hookrightarrow, "\widetilde{\iota}"] 
    \arrow[d, "\pi_S"'] 
  &
\widetilde{X}
    \arrow[d, "\pi_X"]
\\
S 
\arrow[r, hookrightarrow, "\iota"']
  &
X 
\end{tikzcd}
\end{equation*}
Let $\Omega$ be a $p$-SKT hyperbolic form on $X$, that is: $\pi_X^*\Omega=\partial\alpha+\bar\partial\beta$, for some bounded ($p-1,p$) form $\alpha$ and bounded ($p,p-1$)-form $\beta$ on $\widetilde{X}$. Thanks to Proposition 1.12 in \cite{demailly1997complex}, §.1, Chapter III, $\iota^*\Omega$ defines a $p$-SKT form on $S$ and:
\begin{equation}
    \pi_S^*(\iota^*\Omega)=\widetilde{\iota}^*(\pi_X^*\Omega)=\partial(\widetilde{\iota}^*\alpha)+\bar\partial(\widetilde{\iota}^*\beta)
\end{equation}
Hence $\iota^*\Omega$ is a $p$-SKT hyperbolic form on $S$.
\end{proof} 
We now aim to highlight the relationship between the hyperbolicity notions defined in degree $2p$ and their corresponding geometric counterparts. Recall that in the case $p=1$, these are the notions of Kähler hyperbolicity, HS hyperbolicity, and SKT hyperbolicity (Remark~\ref{82}.1). They have been shown to imply Kobayashi hyperbolicity (see \cite{chen2018compact}, Theorem~4.1, Corollary~\ref{83} and \cite{marouani2023skt}, Theorem~3.5, respectively). 

For $p=n-1$, the corresponding notions are balanced hyperbolicity, sG hyperbolicity and Gauduchon hyperbolicity (Remark~\ref{82}.2). These manifolds have been proven to be divisorially hyperbolic (see \cite{marouani2023balanced}, Theorem~2.8; \cite{ma2024strongly}, Theorem~2.6; and \cite{marouani2023skt}, Theorem~3.10, respectively). 

For general $2\leq p \leq n-1$, we introduce a natural generalization of divisorial hyperbolicity (referred to as \emph{$p$-dimensional hyperbolicity} in \cite{haggui2023compact}, Definition~3.14 and \emph{$p$-hyperbolicity} in \cite{kasuya2025partially}, Definition 2.5).

Let $(X,\omega)$ be a compact Hermitian manifold with $\dim_\C X=n\geq 2$, and let $f:\C^p\longrightarrow X$ be a holomorphic map with $1\leq p\leq n-1$, which is \emph{non-degenerate} at some point $x_0\in\C^p$, i.e. $d_{x_0}f:\C^p\longrightarrow T^{1,0}_{x_0}X$ is of maximal rank. Set: $\Sigma_f:=\{x\in\C^p\hspace{1pt}:\hspace{1pt}f\text{ is degenerate at }x\}$; this is a proper analytic subset of $\C^p$. Then $f^*\omega$ defines a \emph{degenerate metric} on $\C^p$ with degeneration set $\Sigma_f$ ; that is $f^*\omega\geq 0$ on $\C^p$ and defines a genuine Hermitian metric on $\C^p\setminus\Sigma_f$. For any $r>0$, we define the ($\omega,f$)\textbf{-volume} of the ball $\B_r\subset\C^p$ of radius $r$ centered at $0$ to be:
\begin{equation}
    \text{Vol}_{\omega,f}(\B_r):=\int_{\B_r}f^*\omega_p>0\hspace{2pt},\quad\qquad\text{where }\hspace{2pt}\omega_p=\dfrac{\omega^p}{p!}
\end{equation}
For any $z\in\C^p$, let $\beta(z)=\lvert z\rvert^2$ be its squared Euclidean norm. By definition of the Hodge star operator induced by the Hermitian metric $f^*\omega$ on $\C^p\setminus\Sigma_f$, we have: 
\begin{equation}
    \dfrac{d\beta(z)}{\lvert d\beta(z)\rvert_{f^*\omega}}\wedge*_{f^*\omega}\left(\dfrac{d\beta(z)}{\lvert d\beta(z)\rvert_{f^*\omega}}\right)=f^*\omega_p(z)\hspace{2pt},\qquad\forall z\in\C^p\setminus\Sigma_f
\end{equation}
This implies that the ($2p-1$)-form $d\sigma_{\omega,f}:=*_{f^*\omega}\left(\dfrac{d\beta(z)}{\lvert d\beta(z)\rvert_{f^*\omega}}\right)$ defines an area measure on spheres $\Sp_r:\partial\B_r\subset\C^p$. 

In particular, we define the ($\omega,f$)\textbf{-area} of the sphere $\Sp_r$ of radius $r>0$ by:
\begin{equation}
    A_{\omega,f}(\Sp_r):=\int_{\Sp_r}d\sigma_{\omega,f,r}>0\hspace{2pt},\qquad\text{ where }\hspace{2pt}d\sigma_{\omega,f,r}=\left(d\sigma_{\omega,f}\right)_{\lvert\Sp_r}
\end{equation}
\begin{dfn}[Definition 2.3, \cite{marouani2023balanced} and Definition 2.3, \cite{kasuya2025partially}]
In the same setting, we say that $f$ has \textbf{subexponential growth} if:
    \renewcommand{\labelenumi}{(\textit{\roman{enumi}})}
    \begin{enumerate}
    \item $\exists\hspace{1pt}C_0,r_0>0$ such that: $\int_{\Sp_r}\lvert d\beta\rvert_{f^*\omega}d\sigma_{f,\omega,r}\leq C_0 r \text{Vol}_{\omega,f}(\B_r)$, $\forall\hspace{1pt}r>r_0$.
    \item $\forall\hspace{1pt}\lambda>0$, we have: $\underset{t\rightarrow+\infty}{\limsup}\left(\dfrac{t}{\lambda}-\log\left(\int_0^t\text{Vol}_{\omega,f}(\B_r)dr\right)\right)=+\infty$.
    \end{enumerate} \label{84}
\end{dfn}
\begin{rmk} Since $X$ is compact, any two Hermitian metrics on $X$ are equivalent. Therefore, the subexponential growth condition on the holomorphic maps from some $\C^p$ into $X$ is independent of the choice of the Hermitian metric on $X$.
\end{rmk}
\begin{dfn}[Definition 2.5, \cite{kasuya2025partially} and Definition 3.14, \cite{haggui2023compact}] 
We say that $X$ is $p$\textbf{-cyclically hyperbolic} for some $1\leq p\leq n-1$ if there is no holomorphic map $f:\C^p\longrightarrow X$ that is non-degenerate at some point $x_0\in\C^p$ and has subexponential growth in the sense of the previous Definition.
\end{dfn}
\begin{rmk}
    \begin{enumerate}
        \item The case $p=n-1$ corresponds exactly to the definition of \textbf{divisorial hyperbolicity} (Definition 2.7, \cite{marouani2023balanced}). 
        \item When $p=1$, it is clear that Kobayashi hyperbolicity implies $1$-cyclic hyperbolicity. Conversly, assume $X$ is a compact complex manifold which admits a non-constant holomorphic map $f:\C\longrightarrow X$. Thanks to \textbf{Brody’s Renormalisation Lemma} (\cite{brody1978compact}, Lemma 2.1), we can assume $f$ to be of order $2$ (see \cite{lang1987introduction}, Theorem 2.6-p.72), that is:
        \begin{equation*}
            \text{Vol}_{\omega,f}(\B_r):=\int_{\B_r}f^*\omega\underset{r\rightarrow\infty}{\sim}r^2
        \end{equation*}
        for some (hence any by compactness of $X$) Hermitian metric $\omega$ on $X$. Or equivalently:
        \begin{equation}
            \dfrac{1}{C_1}r^2-C_2\leq\text{Vol}_{\omega,f}(\B_r)\leq C_1 r^2+C_2\hspace{2pt},\qquad\forall r>r_0
        \end{equation}
        for some constants $C_1,C_2,r_0>0$. One readily checks that such an $f$ satisfies condition (\emph{ii}) in Definition \ref{84}. On the other hand, one can write $f^*\omega$ as:
        \begin{equation*}
            f^*\omega=\dfrac{i}{2}\mu(z)dz\wedge d\bar z
        \end{equation*}
        where $\mu:\C\longrightarrow\R$ is a $\mathscr{C}^\infty$-function, positive on $\C\setminus\Sigma_f$ and vanishes along $\Sigma_f$ (which is a countable set of points). For any $z=r(\cos\theta+i\sin\theta)\in\C\setminus\Sigma_f$, simple computations yield:
        \begin{equation*}
            \beta(z)=\lvert z\rvert^2\Longrightarrow d\beta(z)=2\lvert z\rvert d\lvert z\rvert\Longrightarrow\lvert d\beta(z)\rvert_{f^*\omega}=\dfrac{2\lvert z\rvert}{\sqrt{\mu(z)}}
        \end{equation*}
        Hence, the length measure induced by $f$ and $\omega$ is given in polar coordinates by:
        \begin{equation*}
            \sigma_{\omega,f}=*_{f^*\omega}\left(\dfrac{d\beta}{\lvert d\beta\rvert_{f^*\omega}}\right)=r\sqrt{\mu}\hspace{1pt}d\theta
        \end{equation*}
        Therefore, for any $r>0$ we get:
        \begin{equation*}
          \int_{\Sp_r}\lvert d\beta\rvert_{f^*\omega}d\sigma_{\omega,f,r}=\int_0^{2\pi}\dfrac{2r}{\sqrt{\mu}}r\sqrt{\mu}d\theta=4\pi r^2
        \end{equation*}
        which is clearly dominated by $r\text{Vol}_{\omega,f}(\B_r)$ for $r>r_0$ large enough. So $f$ satisfies also condition (\emph{i}) in Definition \ref{84}. Thus, $1$-cyclic hyperbolicity coincides with Brody hyperbolicity (equivalently, with Kobayashi hyperbolicity). 
        \item Kobayashi hyperbolicity implies $p$-cyclic hyperbolicity, $\forall 1\leq p\leq n-1$. Indeed, if $f:\C^p\longrightarrow X$ is a holomorphic map, non-degenerate at some point $x\in\C^p$, then $f$ remains non-constant along every complex line passing through $x$.
    \end{enumerate}
\end{rmk}
\begin{pbm}
    Does $p$-cyclic hyperbolicity imply ($p+1$)-cyclic hyperbolicity ?
\end{pbm}
S. Marouani and F. Haggui established that every $p$-Kähler hyperbolic manifold is $p$-cyclically hyperbolic (Theorem 3.15 in \cite{haggui2023compact}). Regarding $p$-HS hyperbolicity, Y. Ma showed that this condition also implies $p$-cyclic hyperbolicity in \cite{ma2024strongly}, Theorem 3.2. We now prove that the same holds for $p$-SKT hyperbolic manifolds:
\begin{thm}
Every compact $p$-SKT hyperbolic manifold is $p$-cyclically hyperbolic.
\end{thm}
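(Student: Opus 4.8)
The plan is to argue by contradiction, mimicking the strategy used for $p$-Kähler hyperbolicity in Theorem 3.15 of \cite{haggui2023compact} and for $p$-HS hyperbolicity in \cite{ma2024strongly}, but using the weaker conclusion that $\pi_X^*\Omega = \partial\alpha + \bar\partial\beta$ rather than $d\eta$. Suppose $X$ is $p$-SKT hyperbolic, so it carries a transverse $(p,p)$-form $\Omega$ with $\partial\bar\partial\Omega = 0$ and $\pi^*\Omega = \partial\alpha + \bar\partial\beta$ for a bounded $(p-1,p)$-form $\alpha$ and a bounded $(p,p-1)$-form $\beta$ on $\widetilde X$. Assume, for contradiction, that there is a holomorphic map $f:\C^p \longrightarrow X$, non-degenerate at some $x_0$, with subexponential growth. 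Lift $f$ to $\widetilde f : \C^p \longrightarrow \widetilde X$ as in Observation~\ref{85}, and note $\widetilde f^*(\partial\alpha + \bar\partial\beta) = \widetilde f^*(\pi^*\Omega) = f^*\Omega$.

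First I would pull back and estimate the volume. Up to shrinking, since $\Omega$ is transverse we have $f^*\Omega \geq c\, f^*\omega_p$ on $\C^p \setminus \Sigma_f$ for some fixed Hermitian metric $\omega$ on $X$ and a constant $c>0$ (using compactness and Demailly's Chapter III positivity comparisons). Hence $\mathrm{Vol}_{\omega,f}(\B_r) \leq C\int_{\B_r} f^*\Omega$. Now because $\C^p \setminus \Sigma_f$ carries the degenerate metric $f^*\omega$ and $\Sigma_f$ has zero measure, and because $f^*\Omega = \widetilde f^*(\partial\alpha) + \widetilde f^*(\bar\partial\beta) = d\big(\widetilde f^*\alpha + \widetilde f^*\beta\big)$ --- here one uses that $\widetilde f$ is holomorphic so that $\widetilde f^*\partial = \partial \widetilde f^*$ and $\widetilde f^*\bar\partial = \bar\partial\widetilde f^*$, and that the total exterior derivative recombines the $\partial$ and $\bar\partial$ pieces into a single $d$ of the $(2p-1)$-form $\widetilde f^*\alpha + \widetilde f^*\beta$ --- Stokes' theorem gives
\begin{equation*}
    \int_{\B_r} f^*\Omega = \int_{\B_r} d\big(\widetilde f^*\alpha + \widetilde f^*\beta\big) = \int_{\Sp_r} \big(\widetilde f^*\alpha + \widetilde f^*\beta\big)\big|_{\Sp_r}.
\end{equation*}

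Next I would bound the boundary integral using boundedness of $\alpha$ and $\beta$. By Observation~\ref{85}, $\widetilde f^*\alpha$ and $\widetilde f^*\beta$ are $f^*\omega$-bounded $(2p-1)$-forms on $\C^p$, so pointwise on $\Sp_r$ their contraction against the unit-normalized tangent frame is controlled by $|d\beta|_{f^*\omega}$ times a constant (the radial direction is special; the remaining $2p-1$ directions contribute the area element $d\sigma_{\omega,f,r}$, and extracting one factor of $|d\beta|_{f^*\omega}$ accounts for the radial normalization). This yields
\begin{equation*}
    \int_{\B_r} f^*\Omega \leq C'\int_{\Sp_r} |d\beta|_{f^*\omega}\, d\sigma_{\omega,f,r}.
\end{equation*}
Combining with condition (\textit{i}) of subexponential growth, $\int_{\Sp_r}|d\beta|_{f^*\omega}\,d\sigma_{f,\omega,r} \leq C_0 r\,\mathrm{Vol}_{\omega,f}(\B_r)$, and with the lower bound $\mathrm{Vol}_{\omega,f}(\B_r) \leq C\int_{\B_r}f^*\Omega$, I get a differential inequality of the form $V(r) \leq A\, r\, V'(r)$ for $V(r) := \int_0^r \mathrm{Vol}_{\omega,f}(\B_s)\,ds$ (after integrating once more in $r$, since $\mathrm{Vol}_{\omega,f}(\B_r) = V'(r)$ and the monotone quantity $\int_{\B_r} f^*\Omega$ dominates a multiple of $\mathrm{Vol}_{\omega,f}(\B_r)$). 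Gronwall-type integration gives $\log V(r) \geq \frac{r}{A} + O(1)$, i.e. at least exponential growth, which directly contradicts condition (\textit{ii}) of subexponential growth (taking $\lambda = A$). Hence no such $f$ exists and $X$ is $p$-cyclically hyperbolic.

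The main obstacle I anticipate is the careful bookkeeping in the boundary estimate: relating $\big(\widetilde f^*\alpha + \widetilde f^*\beta\big)\big|_{\Sp_r}$ to the measure $|d\beta|_{f^*\omega}\, d\sigma_{\omega,f,r}$ requires splitting tangent directions at points of $\Sp_r \setminus \Sigma_f$ into the radial and spherical parts with respect to the degenerate metric $f^*\omega$, and checking that the degeneracy locus $\Sigma_f$ (a proper analytic subset, hence of measure zero and not separating) causes no trouble in Stokes' theorem --- this is handled exactly as in \cite{marouani2023balanced} and \cite{haggui2023compact}, by an exhaustion/regularization argument away from $\Sigma_f$. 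The algebraic positivity input $f^*\Omega \geq c\, f^*\omega_p$ is the other point needing care, but it follows from transversality of $\Omega^{p,p} = \Omega$ together with compactness of $X$ as in the cited references.
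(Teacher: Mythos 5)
Your setup and the first half of the argument coincide with the paper's proof: the comparison $f^*\omega_p\leq C\,f^*\Omega$ coming from transversality of $\Omega$ and compactness of $X$, the bidegree observation that $\widetilde f^*(\partial\alpha+\bar\partial\gamma)=d\,\widetilde f^*(\alpha+\gamma)$ on $\C^p$ (the stray components $\bar\partial\alpha$ and $\partial\gamma$ pull back to zero), and the use of Stokes' theorem across the measure-zero set $\Sigma_f$ are all exactly what the paper does. The gap is in the endgame. Your pointwise bound of the boundary term by $C'\int_{\Sp_r}\lvert d\beta\rvert_{f^*\omega}\,d\sigma_{\omega,f,r}$ is both unjustified and self-defeating. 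Unjustified, because the restriction to $\Sp_r$ of an $f^*\omega$-bounded $(2p-1)$-form is controlled by the induced area measure $d\sigma_{\omega,f,r}$ alone; there is no reason to insert an extra factor of $\lvert d\beta\rvert_{f^*\omega}$, which is not bounded in general (in the paper's $p=1$ computation it equals $2\lvert z\rvert/\sqrt{\mu}$). Self-defeating, because feeding that bound into condition (\emph{i}) of subexponential growth produces $\mathrm{Vol}_{\omega,f}(\B_r)\leq C''\,r\,\mathrm{Vol}_{\omega,f}(\B_r)$, which carries no information for large $r$; and the inequality $V(r)\leq A\,r\,V'(r)$ you extract from it integrates to $\log V(r)\geq \tfrac{1}{A}\log r+O(1)$, i.e.\ polynomial growth, not the exponential bound $\log V(r)\geq r/A+O(1)$ that you assert. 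Polynomial growth of $V$ does not contradict condition (\emph{ii}), so the contradiction never materializes.

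The missing ingredient is the Cauchy--Schwarz/coarea step. The paper bounds the boundary term by $B\,A_{\omega,f}(\Sp_r)$ (just the area, with no factor of $\lvert d\beta\rvert_{f^*\omega}$), then combines the Cauchy--Schwarz inequality
$A_{\omega,f}(\Sp_r)^2\leq\bigl(\int_{\Sp_r}\lvert d\beta\rvert^{-1}_{f^*\omega}\,d\sigma_{\omega,f,r}\bigr)\bigl(\int_{\Sp_r}\lvert d\beta\rvert_{f^*\omega}\,d\sigma_{\omega,f,r}\bigr)$
with the coarea identity $\mathrm{Vol}_{\omega,f}(\B_r)=\int_0^r\bigl(\int_{\Sp_t}\lvert d\beta\rvert^{-1}_{f^*\omega}\,d\sigma_{\omega,f,t}\bigr)\,2t\,dt$. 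When condition (\emph{i}) is then applied to the quantity $\int_{\Sp_t}\lvert d\beta\rvert_{f^*\omega}\,d\sigma_{\omega,f,t}\leq C_0\,t\,\mathrm{Vol}_{\omega,f}(\B_t)$ sitting in the denominator of the resulting integrand, the factor $t$ cancels against the $2t\,dt$ Jacobian, yielding the genuine inequality $F'(r)\geq C\,F(r)$ for $F(r)=\int_0^r\mathrm{Vol}_{\omega,f}(\B_t)\,dt$, hence true exponential growth and the desired contradiction with (\emph{ii}). Without this cancellation your argument does not close; everything before the boundary estimate is fine and identical in substance to the paper's proof.
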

\begin{proof}
Let ($X,\omega$) be a compact Hermitian manifold with $\dim_\C X=n$ and $\pi:\widetilde{X}\longrightarrow X$ be the universal cover of $X$. Assume $X$ equipped with a $p$-SKT hyperbolic form $\Omega$, i.e. $\Omega$ is a smooth weakly strictly positive ($p,p$)-form such that $\partial\bar\partial\Omega=0$ and $\widetilde{\Omega}=\partial\alpha+\bar\partial\gamma$, where $\alpha$ (resp. $\gamma$) is a smooth $\widetilde{\omega}$-bounded ($p-1,p$)-form (resp. ($p,p-1$)-form) on $\widetilde{X}$. Assume that a holomorphic map $f:\C^p\longrightarrow X$ such as in Definition~\ref{84} exists. Then $f^*\omega$ defines a degenerate Hermitian metric on $X$ (with degeneracy locus $\Sigma_f$). Let us first make the following observation:
\begin{obs} Let $\eta$ be an $\widetilde{\omega}$-bounded $k$-form on $\widetilde{X}$, with $k\leq 2p$. Then $\widetilde{f}^*\eta$ is $f^*\omega$-bounded on $\C^p$. \label{85}
\end{obs}
\begin{proof} Since $\C^p$ is simply connected, $f$ lifts to a holomorphic map $\widetilde{f}:\C^p\longrightarrow\widetilde{X}$, that is: $\pi\circ\widetilde{f}=f$. In particular, the degeneracy locus of $\widetilde{f}$ is also $\Sigma_f$. Let $\eta$ be a $\widetilde{\omega}$-bounded $k$-form on $\widetilde{X}$, with $k\leq 2p$. For any $x\in\C^p$, and any tangent vectors $\xi_1,\cdots,\xi_k\in\C^p$ at $x$, we have:
\begin{align*}
\lvert\widetilde{f}^*\eta(\xi_1,\cdots,\xi_k)\rvert^2_{f^*\omega}&=\lvert\eta(\widetilde{f}_*(\xi_1),\cdots,\widetilde{f}_*(\xi_k))\rvert^2_{\widetilde{\omega}} \\
&\leq C\lvert\widetilde{f}_*(\xi_1)\rvert^2_{\widetilde{\omega}}\cdots\lvert\widetilde{f}_*(\xi_k)\rvert^2_{\widetilde{\omega}} \qquad\qquad\text{since }\eta\text{ is bounded} \\
&=C\lvert\xi_1\rvert^2_{f^*\omega}\cdots\lvert\xi_k\rvert^2_{f^*\omega}\qquad\qquad\qquad\text{since }\widetilde{f}^*\widetilde{\omega}=f^*\omega
\end{align*} 
\end{proof}
Now we can resume the proof of our theorem. $f^*\Omega$ defines a positive ($p,p$)-form (hence \emph{strongly positive} by maximality of the dimension, see Corollary 1.9 in \cite{demailly1997complex}, §.1, Chapter III) on $\C^p\setminus\Sigma_f$. Hence, there exists a smooth positive function $h:\C^p\setminus\Sigma_f\longrightarrow\R$ such that: $f^*\omega_p=hf^*\Omega$, where $\omega_p=\dfrac{\omega^p}{p!}$. But since both $f^*\omega_p$ and $f^*\Omega$ are smooth, $h$ extends smoothly to the whole $\C^p$. In particular, we have for any $z\in\C^p$:
\begin{equation*}
    \sqrt{\binom{n}{p}}=\lvert\omega_p(f(z))\rvert_\omega=\lvert f^*\omega_p(z)\rvert_{f^*\omega}=h(z)\lvert f^*\Omega(z)\rvert_{f^*\omega}=h(z)\lvert\Omega(f(z))\rvert_\omega
\end{equation*}
Thus: $h(z)=\displaystyle\sqrt{\binom{n}{p}}\lvert\Omega(f(z))\rvert^{-1}_\omega$, $\forall z\in\C^p$. On the other hand, since $\Omega(x)\neq 0$ for any $x\in X$ by positivity, the compactness of $X$ implies that there exists a constant $A>0$ such that: $\dfrac{1}{A}\leq\lvert\Omega(x)\rvert_\omega\leq A$. Hence, we get for any $r>0$:
\begin{equation}
    \text{Vol}_{\omega,f}(\B_r)=\int_{\B_r}f^*\omega_p=\int_{\B_r}hf^*\Omega\leq A\sqrt{\binom{n}{p}}\int_{\B_r}f^*\Omega
    \label{86}
\end{equation}
Moreover, we have:
\begin{equation}
    f^*\Omega=\widetilde{f}^*\widetilde{\Omega}=\widetilde{f}(\partial\alpha+\bar\partial\gamma)\overset{(a)}{=}\widetilde{f}(d(\alpha+\gamma))=d\widetilde{f}(\alpha+\gamma) \label{87}
\end{equation}
where ($a$) is a consequence of the bidegree constraints. Substituting (\ref{87}) into (\ref{86}) gives:
\begin{equation}
    \text{Vol}_{\omega,f}(\B_r)\leq A\sqrt{\binom{n}{p}}\int_{\B_r}d\widetilde{f}^*(\alpha+\gamma)=A\sqrt{\binom{n}{p}}\int_{\Sp_r}\widetilde{f}^*(\alpha+\gamma)\overset{(b)}{\leq}BA_{\omega,f}(\Sp_r)\hspace{2pt},
    \label{88}
\end{equation}
where ($b$) follows from Observation \ref{85} (here $\eta=\alpha+\gamma$). On the other hand, the Cauchy-Schwarz inequality yields:

\begin{equation}
    A_{\omega,f}(\Sp_r)^2\leq\left(\int_{\Sp_r}\dfrac{1}{\lvert d\beta\rvert_{f^*\omega}}d\sigma_{\omega,f,r}\right)\left(\int_{\Sp_r}\lvert d\beta\rvert_{f^*\omega}d\sigma_{\omega,f,r}\right)\hspace{2pt},\qquad\forall r>0 \label{89}
\end{equation}
where $\beta(z)=\lvert z\rvert^2=r^2$, hence $d\beta=2rdr$. Therefore, by combining (\ref{88}) and (\ref{89}), we get for any $r>r_0$ sufficiently large:
\begin{align*}
    \text{Vol}_{\omega,f}(\B_r)&=\int_0^r\left(\int_{\Sp_t}\dfrac{1}{\lvert d\beta\rvert_{f^*\omega}}d\sigma_{\omega,f,t}\right)d\beta \\
    &\overset{(\ref{89})}{\geq} \int_0^r\dfrac{A_{\omega,f}(\Sp_t)^2}{\int_{\Sp_t}\lvert d\beta\rvert_{f^*\omega}d\sigma_{\omega,f,t}}2tdt \\ &\overset{(\ref{88})}{\geq}\dfrac{2}{B^2}\int_0^r\dfrac{t\text{Vol}_{\omega,f}(\B_t)}{\int_{\Sp_t}\lvert d\beta\rvert_{f^*\omega}d\sigma_{\omega,f,t}}\text{Vol}_{\omega,f}(\B_t)dt \\
    & \overset{(c)}{\geq}\dfrac{2}{C_0B^2}\int_0^r\text{Vol}_{\omega,f}(\B_t)dt=:CF(r)
\end{align*}
where $C:=\dfrac{2}{C_0B^2}>0$, $F(r):=\int_0^r\text{Vol}_{\omega,f}(\B_t)dt$ and ($c$) follows from the condition ($i$) in Definition~\ref{84}. Thus, the previous inequality takes the form:
\begin{equation*}
    F'(r)\geq CF(r) \xLongrightarrow{\qquad} (\log F(r))'\geq C \xLongrightarrow{\qquad} F(r)\geq F(r_0)\exp (C(r-r_0))\hspace{2pt},\qquad\forall r>r_0
\end{equation*}
In this situation, condition ($ii$) in Definition~\ref{84} forces $F(r)=0$, $\forall r>r_0$ which in turn yields: $\text{Vol}_{\omega,f}(\B_r)=0$, $\forall r> r_0$. Therefore $f^*\omega=0$, contradicting our assumption.
\end{proof} 

\subsection{Deformation Stability Results}
Finally, we will study the holomorphic deformations of $p$-HS hyperbolic and $p$-Kähler hyperbolic manifolds. Our first result in this direction is the deformation \textbf{openness} of the $p$-HS hyperbolicity property:
\begin{thm} Let $\{X_t:t\in\B_\varepsilon(0)\}$ be an analytic family of compact complex $n$-dimensional manifolds such that $X_0$ is $p$-HS hyperbolic for some $1\leq p\leq n-1$. Then the fibers $X_t$ are also $p$-HS hyperbolic for $t$ sufficiently small.
\end{thm}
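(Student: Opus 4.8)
The plan is to show that, for $t$ close to $0$, the very same form that witnesses the $p$-HS hyperbolicity of $X_0$ continues to do the job on $X_t$. By Ehresmann's theorem we may regard the whole family as a single differentiable manifold $X$ carrying a smooth family of complex structures $\{J_t\}_{t\in\B_\varepsilon(0)}$ with $J_0$ the complex structure of $X_0$; in particular the universal covering $\pi\colon\widetilde X\to X$ is independent of $t$. Fix a Riemannian metric $g$ on $X$ once and for all — since $X$ is compact, the notion of $\widetilde d$(boundedness) does not depend on this choice. The hypothesis that $X_0$ is $p$-HS hyperbolic provides a real smooth $d$-closed $2p$-form $\Omega$ on $X$ whose $(p,p)$-component $\Omega_0^{p,p}$ (relative to $J_0$) is transverse, together with a $g$-bounded $(2p-1)$-form $\eta$ on $\widetilde X$ such that $\pi^*\Omega=d\eta$.

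The structural point is that $p$-HS hyperbolicity only demands a \emph{closed} real $2p$-form with transverse $(p,p)$-part, rather than a form of pure bidegree $(p,p)$ as in the $p$-Kähler case; consequently the pair $(\Omega,\eta)$ involves the complex structure \emph{only} through the transversality of $\Omega^{p,p}$. Indeed $d\Omega=0$ and $\pi^*\Omega=d\eta$ (with $\eta$ $g$-bounded) are assertions about the smooth manifold $X$ alone, and are therefore unchanged when $J_0$ is replaced by $J_t$. Hence it suffices to prove that the bidegree-$(p,p)$ component $\Omega_t^{p,p}$ of $\Omega$ computed with respect to $J_t$ remains transverse for all $t$ near $0$.

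Now the projection $\Lambda^{2p}T^*X\to\Lambda^{p,p}_{J_t}T^*X$ depends smoothly on $J_t$, hence on $t$, so $\{\Omega_t^{p,p}\}_{t\in\B_\varepsilon(0)}$ is a $\mathscr C^\infty$ family of real $2p$-forms on $X$ specializing to $\Omega_0^{p,p}$ at $t=0$. Transversality is an open condition: at a point it describes an open subset of the pairs (complex structure, $(p,p)$-form), and globally the transversality of $\Omega_t^{p,p}$ with respect to $J_t$ is equivalent to the strict positivity of a jointly continuous function on a compact parameter bundle over $\B_\varepsilon(0)\times X$ (formed from the Grassmann bundles of $J_t$-complex $(n-p)$-planes). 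This function is positive on $\{0\}\times X$, so by compactness of $X$ it stays positive over $\B_{\varepsilon'}(0)\times X$ for some $0<\varepsilon'\leq\varepsilon$; that is, $\Omega_t^{p,p}$ is transverse with respect to $J_t$ for all $|t|<\varepsilon'$.

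For such $t$, then, $\Omega$ is a real $d$-closed $2p$-form on $X_t$ with transverse $(p,p)$-component, i.e.\ a $p$-HS form on $X_t$, and it is $\widetilde d$(bounded) via $\pi^*\Omega=d\eta$; hence $X_t$ is $p$-HS hyperbolic. I expect the only genuinely delicate point to be the uniform (joint in $t$ and $x$) openness of the transversality condition over the compact manifold; everything else is bookkeeping. For $p=1$ this argument specializes to the openness of HS hyperbolicity, with $\Omega=\alpha+\omega+\bar\alpha$ and $\Omega_t^{1,1}>0$ for small $t$.
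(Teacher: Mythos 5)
Your proposal is correct and follows essentially the same route as the paper's proof: decompose the fixed $d$-closed $2p$-form $\Omega$ with respect to the varying complex structure, use continuity and compactness to keep the $(p,p)$-component transverse for small $t$, and note that $\widetilde{d}$(boundedness) of the fixed primitive $\eta$ persists because all the metrics involved are comparable on the compact underlying manifold. The only cosmetic difference is that you fix one background Riemannian metric while the paper uses a smooth family of Hermitian metrics $\omega_t$, which amounts to the same comparability argument.
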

\begin{proof}
   Fix a $\mathscr{C}^\infty$-family $\{\omega_t\}_{t\in\B_\varepsilon(0)}$ of Hermitian metrics on $(X_t)_{t\in\B_\varepsilon(0)}$. Let $\Omega$ be a $d$-closed $2p$-form on $X_0$ such that $\Omega^{p,p}$ is weakly strictly positive and $\widetilde{\Omega}=d\Gamma$, for some $\widetilde{\omega_0}$-bounded ($2p-1$)-form on $\widetilde{X_0}$. For $t\in\B_\varepsilon(0)$, let $\Omega_t^{p,p}$ be the ($p,p$)-component of $\Omega$ with respect to the complex structure on $X_t$ (In particular: $\Omega_0^{p,p}=\Omega^{p,p}$, which is weakly strictly positive by hypothesis). By continuity, $\Omega_t^{p,p}$ is weakly strictly positive for $t$ close enough to 0 (see for instance Lemma 4.2, \cite{bellitir2020deformation} or Proposition 4.12, \cite{rao2022power} for the proof), meaning that $\Omega_t^{p,p}$ is a $p$-HS form on $X_t$, for $t\sim 0$. Morevoer, $\widetilde{\Omega}=d\Gamma$ with: $\lVert\Gamma\rVert_{L^\infty_{\widetilde{\omega_t}}(\widetilde{X_t})}\leq C_t\lVert\Gamma\rVert_{L^\infty_{\widetilde{\omega_0}}(\widetilde{X_0})}<\infty$, for some $C_t>0$ (since the metrics $\omega_0$ and $\omega_t$ are comparable on the underlying \textbf{compact} manifold $X$). Hence $X_t$ is also $p$-HS hyperbolic for $t$ close enough to 0.
\end{proof}
Since $p$-Kähler hyperbolicity implies $p$-HS hyperbolicity for any $1\leq p\leq n-1$, it follows that small deformations of a compact $p$-Kähler hyperbolic manifold are $p$-HS hyperbolic. In particualr, in the case $p=1$, we have:
\begin{cor} 
Let $\{X_t:t\in\B_\varepsilon(0)\}$ be an analytic family of compact complex manifolds of dimension $n$, such that $X_0$ is Kähler hyperbolic. Then the fibers $X_t$ are HS-hyperbolic for $t$ sufficiently small.  
\end{cor}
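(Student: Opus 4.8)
The plan is to derive this statement as an immediate consequence of two ingredients already in place: the implication ``Kähler hyperbolic $\Longrightarrow$ HS hyperbolic'' and the deformation openness of HS hyperbolicity. First I would observe that a Kähler hyperbolic metric is in particular HS hyperbolic. Indeed, if $\omega_0$ is a Kähler metric on $X_0$ with $\widetilde{\omega_0}=d\eta$ for some $\widetilde{\omega_0}$-bounded $1$-form $\eta$ on $\widetilde{X_0}$, then $\Omega:=\omega_0$ is itself a real $d$-closed $2$-form whose $(1,1)$-component $\Omega^{1,1}=\omega_0$ is positive definite and which is $\widetilde{d}$(bounded); this is precisely implication $(1)$ of Proposition~\ref{75} (equivalently, the case $p=1$ of the hierarchy recorded in Remark~\ref{82}). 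Hence $X_0$ is HS hyperbolic.

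Next I would invoke the preceding theorem on the deformation openness of $p$-HS hyperbolicity, in the case $p=1$: applying it to the analytic family $\{X_t:t\in\B_\varepsilon(0)\}$ with $X_0$ HS hyperbolic shows that $X_t$ is HS hyperbolic for all $t$ sufficiently close to $0$, which is exactly the assertion. This completes the proof.

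Since both steps are already available, there is no genuine obstacle here; the statement is essentially a bookkeeping corollary. If one wanted a self-contained argument avoiding the $p$-HS openness theorem, the only real work is in re-running its proof specialized to $p=1$: fix the $d$-closed $2$-form $\Omega=\omega_0$ on the underlying differentiable manifold $X$ carrying the family, decompose $\Omega=\Omega_t^{2,0}+\Omega_t^{1,1}+\Omega_t^{0,2}$ with respect to the complex structure of $X_t$, note that $\Omega_t^{1,1}>0$ for $t$ near $0$ by continuity (so $\Omega_t^{1,1}$ is an HS metric on $X_t$, equivalently $\Omega$ is a $1$-HS form for the complex structure of $X_t$), and observe that the relation $\widetilde{\Omega}=d\Gamma$ remains valid with $\lVert\Gamma\rVert_{L^\infty_{\widetilde{\omega_t}}(\widetilde{X_t})}\leq C_t\,\lVert\Gamma\rVert_{L^\infty_{\widetilde{\omega_0}}(\widetilde{X_0})}<\infty$ for some $C_t>0$, because $\omega_0$ and $\omega_t$ are comparable on the compact total space. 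This is the same argument as in the previous theorem with $p=1$, and no new difficulty arises.
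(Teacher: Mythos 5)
Your proposal is correct and matches the paper's own (implicit) argument exactly: the paper derives this corollary by combining implication (1) of Proposition~\ref{75} (Kähler hyperbolic $\Rightarrow$ HS hyperbolic) with the preceding theorem on deformation openness of $p$-HS hyperbolicity at $p=1$. The optional self-contained rerun of the openness proof is also faithful to the paper's method, so nothing further is needed.
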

When $2\leq p\leq n-1$, the fibers are not necessarily $p$-Kähler. Sufficient conditions ensuring the stability of the $p$-Kähler property were given for instance in \cite{rao2019local}, Theorem 1.1 and \cite{rao2022power}, Theorem 1.8. In order to study the small deformations of $p$-Kähler hyperbolic manifolds, we will assume that the $\partial\bar\partial$-property is satisfied. Note that a $p$-Kähler $\partial\bar\partial$-manifold with $2\leq p\leq n-1$ is not necessarily Kähler (take a proper modification of a compact Kähler manifold, see Theorem 4.8 in \cite{alessandrini1993metric} and Theorem 5.22 in \cite{deligne1975real}). The following result extends part~(1) of Theorem~\ref{71} as well as Theorem~4.10 of \cite{khelifati2025holomorphic} in the $L^\infty$-setting, for arbitrary degree $1 \leq p \leq n-1$:
\begin{thm} Let $\{X_t:t\in\B_\varepsilon(0)\}$ be an analytic family of compact complex $n$-dimensional manifolds such that $X_0$ is a $p$-Kähler hyperbolic $\partial\bar\partial$-manifold for some $1\leq p\leq n-1$. Then there exists a $\mathscr{C}^\infty$-family of $p$-Kähler forms $\{\Omega_t\}_{t\sim0}$ on the fibers $X_t$ that are $p$-SKT hyperbolic for $t$ sufficiently small.
\end{thm}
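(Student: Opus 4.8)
The plan is to combine the deformation-openness machinery already used in the proof of Theorem~\ref{71} with the Hodge-theoretic rigidity of $\partial\bar\partial$-manifolds recorded in Proposition~\ref{94}, together with the stability of the $p$-Kähler property under deformations of $\partial\bar\partial$-manifolds. First, let $X$ denote the underlying differentiable manifold of the family, fix a $\mathscr{C}^\infty$-family $\{h_t\}_{t\in\B_\varepsilon(0)}$ of Hermitian metrics with $h_0$ a reference metric, and let $\Omega$ be a $p$-Kähler form on $X_0$ which is $\widetilde{d}$(bounded): $\Omega$ is $d$-closed, $\Omega=\Omega^{p,p}$ is transverse for the complex structure $J_0$, and $\widetilde{\Omega}=d\Gamma$ for some $h_0$-bounded $(2p-1)$-form $\Gamma$ on $\widetilde{X_0}$. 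Decompose $\Omega$ with respect to the complex structure $J_t$ on $X_t$ as $\Omega=\sum_{a+b=2p}\Omega_t^{a,b}$; by continuity the component $\Omega_t^{p,p}$ is transverse for $t$ close to $0$ (Lemma~4.2 in \cite{bellitir2020deformation} or Proposition~4.12 in \cite{rao2022power}), so $\Omega_t^{p,p}$ is at least a $p$-SKT candidate once we arrange $\partial_t\bar\partial_t$-closedness.

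The key point is to replace $\Omega_t^{p,p}$ by a genuine $p$-SKT form in the same "shape" while keeping track of $\widetilde{d}$-boundedness. Since $X_0$ is a $\partial\bar\partial$-manifold, it is stable under small deformations (Wu \cite{wu2006geometry}, or Proposition~\ref{94} together with upper semicontinuity of Bott-Chern numbers as in \cite{popovici2014deformation}), so each $X_t$ for $t\sim 0$ is again a $\partial\bar\partial$-manifold; hence the $p$-Kähler property is open here (by the $p$-Kähler analogue of the balanced-stability argument, using that on a $\partial\bar\partial$-manifold a $d$-closed transverse $(p,p)$-form can be modified within its Bott-Chern class), and in fact there is a $\mathscr{C}^\infty$-family $\{\Omega_t\}_{t\sim 0}$ of $p$-Kähler forms on $X_t$ with $\Omega_0=\Omega$. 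Concretely, $\Omega_t := \Omega_t^{p,p} + i\partial_t\bar\partial_t\varphi_t$ where $\varphi_t$ is the minimal-norm solution of the $\partial_t\bar\partial_t$-equation $i\partial_t\bar\partial_t\varphi_t = -(\bar\partial_t\Omega_t^{p,p})^{\text{primitive part}}$ furnished by the $(n-1,n)$-type weak $\partial\bar\partial$-Lemma (which holds on a $\partial\bar\partial$-manifold) applied in the appropriate bidegree, exactly as in the proof of Theorem~\ref{71}; smoothness in $t$ follows from Theorem~5 in \cite{kodaira1960deformations}. Since $\Omega_t$ differs from a $J_t$-component of the fixed form $\Omega$ by an exact-in-the-$\partial_t\bar\partial_t$-sense correction term built from $\mathscr{C}^\infty$-families, lifting to $\widetilde{X_t}$ gives $\widetilde{\Omega_t}^{p,p}=\partial_t\Gamma_t^{p-1,p}+\bar\partial_t\Gamma_t^{p,p-1}$ (the $(p,p)$-part of $\widetilde{\Omega}=d\Gamma$), and adding $i\pi^*(\partial_t\bar\partial_t\varphi_t)=\partial_t(i\pi^*\bar\partial_t\varphi_t)$ and its conjugate exhibits $\widetilde{\Omega_t}=\partial_t\alpha_t+\bar\partial_t\beta_t$ with $\alpha_t:=\Gamma_t^{p-1,p}+i\pi^*\bar\partial_t\varphi_t$, $\beta_t:=\Gamma_t^{p,p-1}-i\pi^*\partial_t\bar\varphi_t$.

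It remains to bound $\alpha_t$ and $\beta_t$ in $L^\infty_{\widetilde{h_t}}$. The pieces $\Gamma_t^{p-1,p},\Gamma_t^{p,p-1}$ are $J_t$-components of the fixed $h_0$-bounded form $\Gamma$, hence $\widetilde{h_t}$-bounded with $\lVert\Gamma_t^{\bullet}\rVert_{L^\infty_{\widetilde{h_t}}(\widetilde{X_t})}\le C_t\lVert\Gamma\rVert_{L^\infty_{\widetilde{h_0}}(\widetilde{X_0})}$ since $h_0$ and $h_t$ are comparable on the compact manifold $X$; and $\pi^*\bar\partial_t\varphi_t$, $\pi^*\partial_t\bar\varphi_t$ are pullbacks of smooth forms on the compact $X_t$, hence automatically bounded. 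Therefore $\Omega_t^{p,p}$, after the above modification, is a $(\widetilde{\partial_t+\bar\partial_t})$(bounded) $p$-SKT form and $X_t$ is $p$-SKT hyperbolic for $t\sim 0$. The main obstacle is the passage from a $p$-Kähler form on the central fiber to a $\partial_t\bar\partial_t$-closed transverse $(p,p)$-form on $X_t$ while preserving control of the primitive: one must be careful that the correction $i\partial_t\bar\partial_t\varphi_t$ does not destroy transversality (handled by continuity and smallness of $t$, exactly as for positivity in Theorem~\ref{71}) and that the weak $\partial\bar\partial$-Lemma is invoked in the correct bidegree on the $\partial\bar\partial$-manifold $X_t$ — here the $\partial\bar\partial$ hypothesis is exactly what makes all three cohomologies coincide (Proposition~\ref{94}) and delivers both the solvability and the smooth dependence on $t$ of the corrector; without it, as the closing remarks of Section~2 emphasize, the required $L^\infty$-estimates in bidegrees away from $(n,n-1)$ are not available.
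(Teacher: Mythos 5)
Your overall strategy — decompose the fixed $p$-Kähler hyperbolic form $\Omega$ into $J_t$-pure-type components, keep the transverse $(p,p)$-part, correct it to a $d$-closed form using the $\partial\bar\partial$-property of the nearby fibers, then lift to the universal cover and bound the potentials — is the right one, and it is essentially the route the paper takes (the paper implements the correction via the Bott--Chern harmonic projection and Green operator, setting $\Omega_t=(F_{BC,t}+\partial_t\bar\partial_t\bar\partial_t^*\partial_t^*\G_{BC,t})\Omega_t^{p,p}$, rather than by solving a $\partial\bar\partial$-equation as in Theorem~\ref{71}). However, your concrete corrector does not work. You set $\Omega_t:=\Omega_t^{p,p}+i\partial_t\bar\partial_t\varphi_t$; for this to be a $(p,p)$-form, $\varphi_t$ must have bidegree $(p-1,p-1)$, but then $i\partial_t\bar\partial_t\varphi_t$ is automatically both $\partial_t$- and $\bar\partial_t$-closed, so adding it can never repair the failure of $d$-closedness of $\Omega_t^{p,p}$ — which is precisely what needs repairing, since $\bar\partial_t\Omega_t^{p,p}=-\partial_t\Omega_t^{p-1,p+1}\neq 0$ in general. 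Moreover, your defining equation $i\partial_t\bar\partial_t\varphi_t=-(\bar\partial_t\Omega_t^{p,p})^{\mathrm{primitive}}$ is bidegree-inconsistent: the left-hand side would be of type $(p,p)$ while the right-hand side is of type $(p,p+1)$. As written, your $\Omega_t$ is a transverse pluriclosed form (indeed $\partial_t\bar\partial_t\Omega_t^{p,p}=0$ already follows from $d\Omega=0$, with no $\partial\bar\partial$-hypothesis), but it is not $d$-closed, so you have not produced the family of $p$-Kähler forms that the statement asserts.

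The repair is the exact degree-$p$ analogue of the proof of Theorem~\ref{71}: the form $\bar\partial_t\Omega_t^{p,p}\in\mathscr{C}^\infty_{p,p+1}(X_t)$ is $d$-closed, $\partial_t$-exact and $\bar\partial_t$-exact, hence $\partial_t\bar\partial_t$-exact on the $\partial\bar\partial$-manifold $X_t$; take $u_t\in\mathscr{C}^\infty_{p-1,p}(X_t)$ to be the minimal $L^2_{h_t}$-norm solution of $\bar\partial_t\Omega_t^{p,p}=i\partial_t\bar\partial_t u_t$ (so $u_0=0$ and $t\mapsto u_t$ is $\mathscr{C}^\infty$ by Theorem 5 of \cite{kodaira1960deformations}), and set $\Omega_t:=\Omega_t^{p,p}+i\partial_t u_t-i\bar\partial_t\bar u_t$. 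This is $d$-closed, transverse for $t\sim 0$ since the correction vanishes at $t=0$, and lifting gives $\widetilde{\Omega_t}=\partial_t(\Gamma_t^{p-1,p}+i\pi^*u_t)+\bar\partial_t(\Gamma_t^{p,p-1}-i\pi^*\bar u_t)$ with both potentials bounded, exactly as in your last paragraph. With this substitution your argument goes through and yields the paper's conclusion.
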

\begin{proof} Let $\{\omega_t\}_{t\in\B_\varepsilon(0)}$ be a $\mathscr{C}^\infty$-family of Hermitian metrics on the fibers $X_t$, and let $\Omega$ be a $p$-Kähler hyperbolic form on $X_0$, that is $\widetilde{\Omega}=d\eta$ for some $\widetilde{\omega_0}$-bounded ($2p-1$)-form $\eta$ on $\widetilde{X_0}$. Recall that the Bott-Chern Laplacian $\Delta_{BC}$ (with respect to the metric $\omega_0$), being elliptic and formally self-adjoint, induces the following $L^2_{\omega_0}$-orthogonal three-space decomposition in any bidegree ($p,q$) (see \cite{popovici2015aeppli}, §2):
\begin{equation}
    \mathscr{C}^\infty_{p,q}(X_0)=\mathcal{H}_{BC}^{p,q}(X_0)\oplus\text{Im}\hspace{2pt}\partial\bar\partial\oplus\left(\text{Im}\hspace{2pt}\partial^*+\text{Im}\hspace{2pt}\bar\partial^*\right)\quad, \label{73}
\end{equation}
and we have
\begin{equation}
    \text{Ker}\hspace{2pt}\bar\partial\cap\text{Ker}\hspace{2pt}\partial=\mathcal{H}_{BC}^{p,q}(X_0)\oplus\text{Im}\hspace{2pt}\partial\bar\partial \label{74}
\end{equation}
yielding the Hodge isomorphism $\mathcal{H}_{BC}^{p,q}(X_0)\simeq H_{BC}^{p,q}(X_0)$. Now, since $\Omega$ is real (as $\Omega$ is weakly strongly positive), $d\Omega=0$ implies that $\partial\Omega=\bar\partial\Omega=0$. Hence $\Omega\in\mathcal{H}_{BC}^{p,p}(X_0)\oplus\text{Im}\hspace{2pt}\partial\bar\partial$ thanks to (\ref{74}). Moreover, $\Omega=F^{p,p}_{BC}\Omega+\Delta_{BC}\G_{BC}\Omega$ by definition of the Green operator $\G_{BC}$, where $F^{p,p}_{BC}\Omega$ is the projection of $\Omega$ onto the Bott-Chern harmonic space $\mathcal{H}^{p,p}_{BC}(X_0)$. 

The uniqueness of the decompostion of $\Omega$ with respect to (\ref{73}) yields: $\Omega=F^{p,p}_{BC}\Omega+\partial\bar\partial\alpha$, for some $\alpha\in\mathscr{C}^\infty_{p-1,p-1}(X_0)$. We can take $\alpha$ to be the minimal $L^2_{\omega_0}$-norm solution of this equation, which is given by the following Neumann-type formula (\cite{popovici2015aeppli}, Theorem 4.1):
\begin{equation*}
    \alpha=(\partial\bar\partial)^*\G_{BC}(\Omega-F_{BC}^{p,p}\Omega)=(\partial\bar\partial)^*\G_{BC}\Omega
\end{equation*}
In other words, one can write $\Omega$ as: $\Omega=(F_{BC}^{p,p}+\partial\bar\partial\bar\partial^*\partial^*\G_{BC})\Omega$. We decompose $\Omega$ as: $\Omega=\underset{j+k=2p}{\sum}\Omega_t^{k,j}$. Since $X_0$ ia a $\partial\bar\partial$-manifold, the function $t\in\B_\varepsilon(0)\longmapsto h^{p,p}_{BC}(X_t)$ is constant (after shrinking $\varepsilon$ if necessary, see \cite{angella2013lemma}, Corollary 3.7). Then one defines:
\begin{equation}
    \Omega_t:=(F_{BC,t}^{p,p}+\partial_t\bar\partial_t\bar\partial_t^*\partial_t^*\G_{BC,t})\Omega_t^{p,p}\hspace{2pt},\qquad\forall t\in\B_\varepsilon(0) \label{92}
\end{equation}
   where $\G_{BC,t}:\mathscr{C}_{p,p}^\infty(X_t)\longrightarrow\mathscr{C}_{p,p}^\infty(X_t)$ is the Green operator associated with the Bot-Chern Laplacian $\Delta_{BC,t}$, and $F_{BC,t}^{p,p}:\mathscr{C}^\infty_{p,p}(X_t)\longrightarrow\mathcal{H}_{BC}^{p,p}(X_t)$ is the projection onto the Bott-Chern harmonic ($p,p$)-forms. $\{\Omega_t\}_{t\in\B_\varepsilon(0)}$ defines a $\mathscr{C}^\infty$-family of $d$-closed ($p,p$)-forms (thanks to Theorem 5 in \cite{kodaira1960deformations}) with $\Omega_0=\Omega$. In particular, $\{\Omega_t\}_{t\sim 0}$ is a $\mathscr{C}^\infty$-family of $p$-Kähler forms on the respective fibers $X_t$ for $t\sim 0$, with $\Omega_0=\Omega$. 
   
   Now, $d\Omega=0$ implies: $\partial_t\bar\partial_t\Omega_t^{k,j}=0$, with $k+j=2p$, $\forall t\sim 0$. Using the fact that the canonical isomorphism given in Proposition~\ref{94}.(3) is induced by the identity map (which exists thanks to the deformation openness of the $\partial\bar\partial$-property), we get:
   
     \begin{equation*}
         [\Omega_t^{k,j}]_A=[F_{BC,t}^{k,j}\Omega_t^{k,j}]_A\quad,\qquad\text{ with }k=j=2p\quad,\quad\forall t\sim 0.
     \end{equation*}
     as $[\Omega_t^{k,j}]_{BC}=[F_{BC,t}^{k,j}\Omega_t^{k,j}]_{BC}$. This implies that: $\Omega_t^{k,j}=F_{BC,t}^{k,j}\Omega_t^{k,j}+\partial_t u_t^{k-1,j}+\bar\partial_t u_t^{k,j-1}$, with $k+j=2p$, $t\sim 0$, and $u_t^{k-1,j}\in\mathscr{C}^\infty_{k-1,j}(X_t)$, $u_t^{k,j-1}\in\mathscr{C}^\infty_{k,j-1}(X_t)$. 
     
     Hence $\Omega$ decomposes with respect to the complex structure on $X_t$ for each $t\in\B_\varepsilon(0)$ as:
\begin{equation}
    \Omega=\underset{j+k=2p}{\sum}F_{BC,t}^{k,j}\Omega_t^{k,j}+\partial_t v_t+\bar\partial_t w_t\hspace{2pt}\quad,\qquad v_t,w_t\in\mathscr{C}_{2p-1}^\infty(X_t)\quad,\quad\forall t\sim 0, \label{93}
\end{equation} 
where $F_{BC,t}^{k,j}:\mathscr{C}^\infty_{k,j}(X_t)\longrightarrow\mathcal{H}_{BC}^{k,j}(X_t)$ is the projection onto the Bott-Chern harmonic ($k,j$)-forms.  By lifting (\ref{93}) to the universal cover and taking the ($p,p$)-component we get:
\begin{equation}
    \widetilde{F_{BC,t}^{p,p}\Omega_t^{p,p}}=\partial_t(\eta_t^{p-1,p}-\pi^*v_t^{p-1,p})+\bar\partial_t(\eta_t^{p,p-1}-\pi^*w_t^{p,p-1})\hspace{2pt},\qquad t\sim 0.
\end{equation}
Hence, lifting (\ref{92}) yields for $t$ sufficiently small:
\begin{equation}
    \widetilde{\Omega_t}=\partial_t(\eta_t^{p-1,p}+\pi^*(\bar\partial_t\bar\partial_t^*\partial_t^*\G_{BC,t}\Omega_t^{p,p}-v_t^{p-1,p})+\bar\partial_t(\eta_t^{p,p-1}-\pi^*w_t^{p,p-1})=\partial_t\alpha_t+\bar\partial_t\beta_t
\end{equation}
where $\alpha_t:=\eta_t^{p-1,p}+\pi^*(\bar\partial_t\bar\partial_t^*\partial_t^*\G_{BC,t}\Omega_t^{p,p}-v_t^{p-1,p})$ and $\beta_t=\eta_t^{p,p-1}-\pi^*w_t^{p,p-1}$, with:
\begin{align*}
\begin{cases}
    \lVert\alpha_t\rVert_{L^\infty_{\widetilde{\omega_t}}(\widetilde{X}_t)}&\leq \lVert\eta_t^{p-1,p}\rVert_{L^\infty_{\widetilde{\omega_t}}(\widetilde{X}_t)}+\lVert\pi^*(\bar\partial_t\bar\partial_t^*\partial_t^*\G_{BC,t}\Omega_t^{p,p}-v_t^{p-1,p})\rVert_{L^\infty_{\widetilde{\omega_t}}(\widetilde{X}_t)}\\ &\leq C_t\lVert\eta\rVert_{L^\infty_{\widetilde{\omega_0}}(\widetilde{X})}+\lVert \bar\partial_t\bar\partial_t^*\partial_t^*\G_{BC,t}\Omega_t^{p,p}\rVert_{L^\infty_{\omega_t}(X_t)}+\lVert v_t\rVert_{L^\infty_{\omega_t}(X_t)}<\infty \\
    \lVert\beta_t\rVert_{L^\infty_{\widetilde{\omega_t}}(\widetilde{X}_t)}&\leq \lVert\eta_t^{p,p-1}\rVert_{L^\infty_{\widetilde{\omega_t}}(\widetilde{X}_t)}+\lVert\pi^* w_t^{p,p-1}\rVert_{L^\infty_{\widetilde{\omega_t}}(\widetilde{X}_t)}\leq C_t\lVert\eta\rVert_{L^\infty_{\widetilde{\omega_0}}(\widetilde{X})}+\lVert w_t\rVert_{L^\infty_{\omega_t}(X)}<\infty
\end{cases}
\end{align*}
for some constant $C_t>0$ depending on the metric $\omega_t$. Hence $\Omega_t$ is $p$-SKT hyperbolic.
\end{proof}
Regarding the deformation behavior of $p$-SKT hyperbolicity, it is already known that in degree $p=1$, the SKT property is \textbf{not} open under small deformations of the complex structure (see, for instance, \cite{tardini2017geometric}, Remark~4.6). However, under the $\partial\bar{\partial}$-assumption, the $p$-SKT property becomes stable under small deformations. Indeed, if $X$ is a compact $\partial\bar\partial$-manifold, then (see \cite{alessandrini2011classes}, Corollary~3.4, or \cite{bellitir2020deformation}, Theorem~4.3):
\begin{center}
    $X$ is $p$-SKT $\xLeftrightarrow{\qquad}$ $X$ is $p$-HS \hspace{2pt},\qquad where  $1\leq p\leq n-1$.
\end{center}
The $\partial\bar\partial$-assumption appears to be quite strong, since, as mentioned above, D.~Popovici conjectured that every $\partial\bar\partial$-manifold is balanced, and the Fino-Vezzoni conjecture predicts that any balanced SKT manifold is in fact Kähler (\cite{fino2015special}, Problem 3). This suggests a deeper relationship between these structures. Motivated by this, we propose the following conjecture for any $1\leq p\leq n-1$:
\begin{conj}
    Let $X$ be a compact $\partial\bar\partial$-manifold. Then:
    \begin{center}
        $X$ is $p$-SKT $\xLeftrightarrow{\qquad}$ $X$ is $p$-Kähler \hspace{2pt},\qquad where $1\leq p\leq n-1$.
    \end{center}
\end{conj}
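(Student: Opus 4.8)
The plan is to prove the conjecture by reducing the statement, via the $\partial\bar\partial$-assumption and its Hodge-theoretic consequences, to a statement about cohomology classes, and then to invoke the equivalences already recorded in the excerpt. One direction, $p$-Kähler $\Rightarrow$ $p$-SKT, is free (it is Remark~\ref{82}.4, specialized to $p$-forms, or simply the observation that a $d$-closed transverse $(p,p)$-form is $\partial\bar\partial$-closed), so the content is entirely in the implication $p$-SKT $\Rightarrow$ $p$-Kähler. Since every compact $\partial\bar\partial$-manifold satisfies $p$-SKT $\Leftrightarrow$ $p$-HS (the equivalence displayed just before the conjecture, from \cite{alessandrini2011classes}, Corollary~3.4), it suffices to show: on a compact $\partial\bar\partial$-manifold, $p$-HS implies $p$-Kähler. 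So let $\Omega$ be a real $d$-closed $2p$-form whose $(p,p)$-component $\Omega^{p,p}$ is transverse; the goal is to produce a genuine $d$-closed transverse $(p,p)$-form.

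First I would decompose $\Omega = \Omega^{p+1,p-1} + \cdots + \Omega^{p,p} + \cdots + \Omega^{p-1,p+1}$ into pure types (for a $2p$-form the types range over $(p+j,p-j)$, and reality pairs the $j$ and $-j$ pieces). From $d\Omega=0$ one extracts, bidegree by bidegree, the cascade of relations $\bar\partial\Omega^{p,p}+\partial\Omega^{p-1,p+1}=0$, $\partial\Omega^{p,p}+\bar\partial\Omega^{p+1,p-1}=0$, and the ``outer'' relations $\partial\Omega^{p+1,p-1}=0$, $\bar\partial\Omega^{p+1,p-1}+\partial\Omega^{p+2,p-2}=0$, etc. Using Proposition~\ref{94}.3 — that on a $\partial\bar\partial$-manifold the natural maps $H^{k,j}_{BC}\to H^{k,j}_{\bar\partial}\to H^{k,j}_A$ are isomorphisms — together with the $\partial\bar\partial$-lemma itself, one argues inductively from the extreme types inward that each off-diagonal component $\Omega^{p+j,p-j}$ with $j\neq 0$ can be absorbed: concretely, I would show that $\Omega$ differs from $\Omega^{p,p}$ by an exact form of the shape $\partial\bar\partial(\text{lower-type stuff}) + d(\text{stuff})$ in such a way that one can modify $\Omega^{p,p}$ itself by $\partial\gamma+\bar\partial\bar\gamma$ (with $\gamma$ of type $(p-1,p)$) to make it $d$-closed — this modification is $\leq$-small in the pointwise sense so transversality is preserved by the same continuity/openness of transversality argument used in Theorem~3.28 of the excerpt (the deformation-openness proof citing Lemma~4.2 of \cite{bellitir2020deformation}). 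Equivalently and perhaps more cleanly: $\Omega^{p,p}$ defines an Aeppli class $\{\Omega^{p,p}\}_A\in H^{p,p}_A(X,\C)$, which by the $\partial\bar\partial$-isomorphism lifts to a Bott–Chern class; a $\partial\bar\partial$-closed representative of that Aeppli class that is $d$-closed and in the same Aeppli class as $\Omega^{p,p}$ differs from $\Omega^{p,p}$ by $\mathrm{Im}\,\partial+\mathrm{Im}\,\bar\partial$, hence is transverse if it is close enough — but we can in fact take the harmonic representative with respect to $\Delta_{BC}$ after noting that the relevant correction term is genuinely small, or instead pass through the $p$-Kähler ``currents'' characterization.

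The cleaner route, which I would actually pursue, is to use the duality characterization of the $p$-Kähler property (Alessandrini–Andreotti–Bassanelli: $X$ is $p$-Kähler iff there is no nonzero positive $\partial\bar\partial$-closed current of bidimension $(p,p)$ that is a $(p,p)$-component of a boundary — the ``$dd^c$ vs. transversality'' HB-duality of \cite{alessandrini1987closed}), and to show that on a $\partial\bar\partial$-manifold this obstruction coincides with the $p$-HS obstruction, so that $p$-HS (no relevant boundary current) already gives $p$-Kähler. In other words: a $p$-HS form exists iff a certain set of ``strongly negative'' currents is avoided, and on a $\partial\bar\partial$-manifold the Bott–Chern/Aeppli comparison collapses the $p$-HS and $p$-Kähler obstruction cones onto each other, because any $\partial\bar\partial$-closed boundary current is automatically $d$-exact. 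I would make this precise by invoking \cite{alessandrini2011classes} for the current-theoretic descriptions of both cones and then checking the cones agree under the $\partial\bar\partial$-hypothesis.

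The main obstacle I anticipate is exactly the passage from ``$\Omega^{p,p}$ can be modified cohomologically to a $d$-closed form'' to ``the modified form is still transverse.'' Transversality is an open but not closed condition, and the correction term produced by the $\partial\bar\partial$-lemma has no a priori smallness. On a $\partial\bar\partial$-manifold one expects to be able to choose the correction to be the $L^2$-minimal (Bott–Chern-harmonic-projected) one, but controlling its pointwise size relative to $\Omega^{p,p}$ is not automatic; this is the analogue, in the non-metric setting, of the difficulty flagged in the excerpt's closing remarks about the Bott–Chern Laplacian not commuting with $\partial,\bar\partial$. I expect the resolution to come from the current-duality formulation rather than from a direct estimate: there one never needs to modify $\Omega^{p,p}$ at all, only to show a certain cone of currents is empty, and the $\partial\bar\partial$-hypothesis does this cleanly. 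For $p=1$ and $p=n-1$ the statement is consistent with (indeed predicted by) the Fino–Vezzoni circle of conjectures, so a secondary sanity check is to verify the argument specializes correctly there — in particular that for $p=n-1$ it does not accidentally prove ``balanced SKT $\partial\bar\partial \Rightarrow$ Kähler'' by trivial means, which would be too strong; the point is that $p$-Kähler for $p=n-1$ is only the balanced condition, not the Kähler one, so no contradiction arises.
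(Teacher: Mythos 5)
First, a point of order: the statement you are proving is stated in the paper as a \emph{conjecture}, and the paper offers no proof of it --- it is put forward precisely because it appears to be out of reach, being a degree-$p$ interpolation between Popovici's conjecture (every compact $\partial\bar\partial$-manifold is balanced) and the Fino--Vezzoni circle of problems. So there is no argument of the author's to compare yours against; the only question is whether your proposal actually closes the problem. It does not, and your own closing ``sanity check'' is the clearest symptom: for $p=n-1$ one has $(n-1)$-SKT $=$ Gauduchon (a condition satisfied by \emph{every} compact complex manifold, by Gauduchon's theorem) and $(n-1)$-K\"ahler $=$ balanced, so your claimed theorem at $p=n-1$ reads ``every compact $\partial\bar\partial$-manifold is balanced'' --- exactly Popovici's open conjecture, which the paper cites as such. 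Likewise at $p=1$ your intermediate step ``HS $\Rightarrow$ K\"ahler on $\partial\bar\partial$-manifolds'' is (a case of) the open Streets--Tian problem. A proof as soft as the one sketched cannot be correct.

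The concrete gaps are the ones you half-identify but do not repair. In the direct route, the $\partial\bar\partial$-lemma does let you write $\partial\Omega^{p,p}=-\bar\partial\Omega^{p+1,p-1}=\partial\bar\partial\gamma$ and replace $\Omega^{p,p}$ by the $d$-closed real form $\Omega^{p,p}+\bar\partial\gamma+\partial\bar\gamma$, but the correction $\bar\partial\gamma+\partial\bar\gamma$ carries no smallness whatsoever: the ``openness of transversality'' argument you borrow from the deformation theorems is a continuity-in-$t$ statement, where the correction tends to $0$ with the deformation parameter; here there is no parameter and no estimate, so transversality can be destroyed. (This is the same obstruction the paper flags when it notes that no $L^\infty$ or spectral control is available for the Bott--Chern Green operator in the relevant bidegrees.) In the duality route, the assertion that ``any $\partial\bar\partial$-closed boundary current is automatically $d$-exact'' on a $\partial\bar\partial$-manifold is where the argument breaks: the obstruction cone to $p$-K\"ahlerness consists of positive $(n-p,n-p)$-currents of the form $T=\partial A+\bar\partial\bar A$ (components of boundaries), and such a $T$ is in general \emph{not} $d$-closed, so the $\partial\bar\partial$-lemma simply does not apply to it; one cannot conclude that it lies in the (smaller) $p$-HS obstruction cone of $d$-exact positive currents. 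Collapsing these two cones is exactly the hard content of the conjecture, not a formal consequence of the Bott--Chern/Aeppli isomorphisms. As written, the proposal establishes only the trivial direction ($p$-K\"ahler $\Rightarrow$ $p$-SKT) and the already-known reduction $p$-SKT $\Leftrightarrow$ $p$-HS on $\partial\bar\partial$-manifolds.
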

A weaker criterion ensuring the stability of the $p$-SKT property under small deformations may be extracted from the arguments in \cite{rao2022power}, Proposition~4.1 and \cite{cavalcanti2012hodge}, Theorem 8.11.

The preceding discussion, in conjunction with Conjecture~4.22 and Conjecture~4.25 in \cite{khelifati2025holomorphic}, leads naturally to the following problem for degree $1 \leq p \leq n-1$:
\begin{pbm} Let $X$ be a compact $\partial\bar\partial$-manifold and fix $1\leq p\leq n-1$. We ask whether the following assertion holds:
    \begin{center}
        $X$ is $p$-SKT hyperbolic $\xLeftrightarrow{\qquad}$ $X$ is $p$-HS hyperbolic $\xLeftrightarrow{\qquad}$ $X$ is $p$-Kähler hyperbolic
    \end{center}
\end{pbm}
An affirmative answer to this question would imply the deformation openness of $p$-SKT hyperbolicity, provided the $\partial\bar{\partial}$-property holds.thereby answering positively Question 5.6 in [52]. It follows from Theorem 3.2.17.(2)

\section*{Acknowledgements}
This work is part of the  author’s PhD thesis, and he would like to thank his supervisor Dan Popovici for carefully reading the manuscript and for providing several very useful remarks. The author would also like to express his gratitude to Philippe Eyssidieux for pointing out the paper of Jean-Paul Mohsen on the construction of negatively curved complete intersections \cite{mohsen2022construction}. Many thanks are also due to Jeffrey Streets for drawing his attention to his very interesting joint work with Man-Chun Lee \cite{lee2021complex}.

\bibliographystyle{acm}
\bibliography{DCH}

\begin{thebibliography}{10}

\bibitem{alessandrini2011classes}
{\sc Alessandrini, L.}
\newblock Classes of compact non-{K}{\"a}hler manifolds.
\newblock {\em Comptes Rendus Mathematique 349}, 19-20 (2011), 1089--1092.

\bibitem{alessandrini2017product}
{\sc Alessandrini, L.}
\newblock Product of generalized $p$-{K}{\"a}hler manifolds.
\newblock {\em Annali di Matematica Pura ed Applicata (1923-) 196}, 5 (2017), 1835--1853.

\bibitem{alessandrini1987closed}
{\sc Alessandrini, L., and Andreatta, M.}
\newblock Closed transverse $(p,p)$-forms on compact complex manifolds.
\newblock {\em Compositio Mathematica 61}, 2 (1987), 181--200.

\bibitem{alessandrini1990small}
{\sc Alessandrini, L., and Bassanelli, G.}
\newblock Small deformations of a class of compact non-{K}{\"a}hler manifolds.
\newblock {\em Proceedings of the American Mathematical Society 109}, 4 (1990), 1059--1062.

\bibitem{alessandrini1993metric}
{\sc Alessandrini, L., and Bassanelli, G.}
\newblock Metric properties of manifolds bimeromorphic to compact k{\"a}hler spaces.
\newblock {\em Journal of Differential Geometry 37}, 1 (1993), 95--121.

\bibitem{angella2013lemma}
{\sc Angella, D., and Tomassini, A.}
\newblock On the $\partial\bar\partial$-lemma and {B}ott-{C}hern cohomology.
\newblock {\em Inventiones mathematicae 192}, 1 (2013), 71--81.

\bibitem{angella2017small}
{\sc Angella, D., and Ugarte, L.}
\newblock On small deformations of balanced manifolds.
\newblock {\em Differential Geometry and its Applications 54\/} (2017), 464--474.

\bibitem{bellitir2020deformation}
{\sc Bellitir, H.}
\newblock Deformation stability of $p$-{SKT} and $p$-{HS} manifolds.
\newblock {\em European Journal of Mathematics 6}, 4 (2020), 1403--1423.

\bibitem{broder2023general}
{\sc Broder, K., and Stanfield, J.}
\newblock A general {S}chwarz lemma for {H}ermitian manifolds.
\newblock {\em arXiv preprint arXiv:2309.04636\/} (2023).

\bibitem{brody1978compact}
{\sc Brody, R.}
\newblock Compact manifolds and {H}yperbolicity.
\newblock {\em Transactions of the American Mathematical Society 235\/} (1978), 213--219.

\bibitem{cavalcanti2012hodge}
{\sc Cavalcanti, G.~R.}
\newblock Hodge theory and deformations of {SKT} manifolds.
\newblock {\em arXiv preprint arXiv:1203.0493\/} (2012).

\bibitem{chen2018compact}
{\sc Chen, B.-L., and Yang, X.}
\newblock Compact {K}ähler manifolds homotopic to negatively curved {R}iemannian manifolds.
\newblock {\em Mathematische Annalen 370}, 3 (2018), 1477--1489.

\bibitem{deligne1975real}
{\sc Deligne, P., Griffiths, P., Morgan, J., and Sullivan, D.}
\newblock Real homotopy theory of {K}ähler manifolds.
\newblock {\em Inventiones mathematicae 29}, 3 (1975), 245--274.

\bibitem{demailly1997complex}
{\sc Demailly, J.-P.}
\newblock {\em Complex analytic and differential geometry}.
\newblock Universit{\'e} de Grenoble I Grenoble, 1997.
\newblock \url{https://www-fourier.univ-grenoble-alpes.fr/~demailly/documents.html}.

\bibitem{dinew2021generalised}
{\sc Dinew, S., and Popovici, D.}
\newblock A generalised volume invariant for {A}eppli cohomology classes of {H}ermitian-symplectic metrics.
\newblock {\em Advances in Mathematics 393\/} (2021), 108056.

\bibitem{ehresmann1947espaces}
{\sc Ehresmann, C.}
\newblock Sur les {E}spaces fibr{\'e}s diff{\'e}rentiables.
\newblock {\em CR Acad. Sci. Paris 224}, 1611-1612 (1947), 9.

\bibitem{fino2015special}
{\sc Fino, A., and Vezzoni, L.}
\newblock Special {H}ermitian metrics on compact solvmanifolds.
\newblock {\em Journal of Geometry and Physics 91\/} (2015), 40--53.

\bibitem{friedman1991threefolds}
{\sc Friedman, R.}
\newblock On threefolds with trivial canonical bundle.
\newblock In {\em Proceedings of Symposia in Pure Mathematics\/} (1991), vol.~53, American Mathematical Society, pp.~103--134.

\bibitem{friedman2020lemma}
{\sc Friedman, R.}
\newblock The $\partial\bar\partial$-lemma for general {C}lemens manifolds.
\newblock {\em Pure and Applied Mathematics Quarterly 15}, 4 (2020), 1001--1028.

\bibitem{fu2012balanced}
{\sc Fu, J., Li, J., and Yau, S.-T.}
\newblock Balanced metrics on non-{K}{\"a}hler {C}alabi-{Y}au threefolds.
\newblock {\em Journal of Differential Geometry 90}, 1 (2012), 81--129.

\bibitem{fu2024birational}
{\sc Fu, J., Wang, H., and Wu, J.}
\newblock On the birational invariance of the balanced hyperbolic manifolds.
\newblock {\em arXiv preprint arXiv:2409.19669\/} (2024).

\bibitem{fu2011note}
{\sc Fu, J., and Yau, S.-T.}
\newblock A note on small deformations of balanced manifolds.
\newblock {\em Comptes Rendus Mathematique 349}, 13-14 (2011), 793--796.

\bibitem{gauduchon1977fibres}
{\sc Gauduchon, P.}
\newblock Fibr{\'e}s hermitiens {\`a} endomorphisme de {R}icci non n{\'e}gatif.
\newblock {\em Bulletin de la Soci{\'e}t{\'e} Math{\'e}matique de France 105\/} (1977), 113--140.

\bibitem{gauduchon1977theoreme}
{\sc Gauduchon, P.}
\newblock Le th{\'e}oreme de l’excentricit{\'e} nulle.
\newblock {\em CR Acad. Sci. Paris S{\'e}r. AB 285}, 5 (1977), A387--A390.

\bibitem{gauduchon19841}
{\sc Gauduchon, P.}
\newblock La 1-forme de torsion d'une vari{\'e}t{\'e} hermitienne compacte.
\newblock {\em Mathematische Annalen 267}, 4 (1984), 495--518.

\bibitem{gromov1991kahler}
{\sc Gromov, M.}
\newblock K{\"a}hler {H}yperbolicity and {$L^2$}-{H}odge theory.
\newblock {\em Journal of differential geometry 33}, 1 (1991), 263--292.

\bibitem{haggui2023compact}
{\sc Haggui, F., and Marouani, S.}
\newblock Compact {C}omplex $p$-{K}{\"a}hler {H}yperbolic {M}anifolds.
\newblock {\em Complex Analysis and Operator Theory 17}, 5 (2023), 58.

\bibitem{harvey1972positive}
{\sc Harvey, R., and Knapp, A.}
\newblock Positive ($p,p$)-forms, {W}irtinger’s inequality and {C}urrents.
\newblock {\em Proc. Tulane Univ 73\/} (1972), 43--62.

\bibitem{harvey1983intrinsic}
{\sc Harvey, R., and Lawson, H.~B.}
\newblock An intrinsic characterization of {K}{\"a}hler manifolds.
\newblock {\em Inventiones mathematicae 74}, 2 (1983), 169--198.

\bibitem{ivanov2013vanishing}
{\sc Ivanov, S., and Papadopoulos, G.}
\newblock Vanishing theorems on ($l|k$)-strong {K}{\"a}hler manifolds with torsion.
\newblock {\em Advances in Mathematics 237\/} (2013), 147--164.

\bibitem{jost1993nonlinear}
{\sc Jost, J., and Yau, S.-T.}
\newblock A nonlinear elliptic system for maps from {H}ermitian to {R}iemannian manifolds and rigidity theorems in {H}ermitian geometry.
\newblock {\em Acta Mathematica 170}, 2 (1993), 221--254.

\bibitem{kasuya2025partially}
{\sc Kasuya, H., and Popovici, D.}
\newblock Partially hyperbolic compact complex manifolds.
\newblock {\em Revista Matem{\'a}tica Iberoamericana\/} (2025).

\bibitem{khelifati2025holomorphic}
{\sc Khelifati, A.}
\newblock Holomorphic {D}eformations of {C}ompact {K}{\"a}hler {H}yperbolic {M}anifolds.
\newblock {\em arXiv preprint arXiv:2508.07096\/} (2025).

\bibitem{kobayashi2005hyperbolic}
{\sc Kobayashi, S.}
\newblock {\em Hyperbolic manifolds and holomorphic mappings: an introduction}.
\newblock World Scientific Publishing Company, 2005.

\bibitem{kodaira1960deformations}
{\sc Kodaira, K., and Spencer, D.}
\newblock On {D}eformations of {C}omplex {A}nalytic {S}tructures, {III}. {S}tability {T}heorems for {C}omplex {S}tructures.
\newblock {\em Annals of Mathematics 71}, 1 (1960), 43--76.

\bibitem{lang1987introduction}
{\sc Lang, S.}
\newblock Introduction to {C}omplex {H}yperbolic {S}paces.
\newblock {\em Verlag\/} (1987).

\bibitem{lee2021complex}
{\sc Lee, M.-C., and Streets, J.}
\newblock Complex manifolds with negative curvature operator.
\newblock {\em International Mathematics Research Notices 2021}, 24 (2021), 18520--18528.

\bibitem{li2024polarized}
{\sc Li, C.}
\newblock Polarized {H}odge structures for {C}lemens manifolds.
\newblock {\em Mathematische Annalen 389}, 1 (2024), 525--541.

\bibitem{lu1996complex}
{\sc Lu, P., and Tian, G.}
\newblock The complex structures on connected sums of ${S}^3\times {S}^3$.
\newblock {\em Manifolds and geometry (Pisa, 1993)\/} (1996), 284--293.

\bibitem{ma2024strongly}
{\sc Ma, Y.}
\newblock Strongly {G}auduchon {H}yperbolicity and {T}wo {O}ther {T}ypes of {H}yperbolicity.
\newblock {\em arXiv preprint arXiv:2404.08830\/} (2024).

\bibitem{marouani2023skt}
{\sc Marouani, S.}
\newblock {SKT} hyperbolic and {G}auduchon hyperbolic compact complex manifolds.
\newblock {\em arXiv preprint arXiv:2305.08122\/} (2023).

\bibitem{marouani2022some}
{\sc Marouani, S., and Popovici, D.}
\newblock Some properties of balanced hyperbolic compact complex manifolds.
\newblock {\em International Journal of Mathematics 33}, 03 (2022), 2250019.

\bibitem{marouani2023balanced}
{\sc Marouani, S., and Popovici, D.}
\newblock Balanced hyperbolic and divisorially hyperbolic compact complex manifolds.
\newblock {\em Mathematical Research Letters 30}, 6 (2023), 1813--1855.

\bibitem{meo1996image}
{\sc Meo, M.}
\newblock Image inverse d'un courant positif ferm{\'e} par une application analytique surjective.
\newblock {\em Comptes rendus de l'Acad{\'e}mie des sciences. S{\'e}rie 1, Math{\'e}matique 322}, 12 (1996), 1141--1144.

\bibitem{michelsohn1982existence}
{\sc Michelsohn, M.}
\newblock On the existence of special metrics in complex geometry.
\newblock {\em Acta Mathematica 149}, 1 (1982), 261--295.

\bibitem{mohsen2022construction}
{\sc Mohsen, J.-P.}
\newblock Construction of negatively curved complete intersections.
\newblock {\em Duke Mathematical Journal 171}, 9 (2022), 1843--1878.

\bibitem{mumford1979algebraic}
{\sc Mumford, D.}
\newblock An algebraic surface with {K} ample, ${K}^2=9$, $p_g=q=0$.
\newblock {\em American Journal of Mathematics 101}, 1 (1979), 233--244.

\bibitem{nakamura1972complex}
{\sc Nakamura, I.}
\newblock On complex parallelisable manifolds and their small deformations.
\newblock {\em Proceedings of the Japan Academy 48}, 7 (1972), 447--449.

\bibitem{pansu1993introduction}
{\sc Pansu, P.}
\newblock Introduction to ${L}^2$-{B}etti numbers.
\newblock {\em Riemannian geometry (Waterloo, ON, 1993) 4\/} (1993), 53--86.

\bibitem{popovici2013deformation}
{\sc Popovici, D.}
\newblock Deformation limits of projective manifolds : {H}odge numbers and strongly {G}auduchon metrics.
\newblock {\em Inventiones mathematicae 194\/} (2013), 515--534.

\bibitem{popovici2013stability}
{\sc Popovici, D.}
\newblock Stability of strongly {G}auduchon manifolds under modifications.
\newblock {\em Journal of Geometric Analysis 23}, 2 (2013), 653--659.

\bibitem{popovici2014deformation}
{\sc Popovici, D.}
\newblock Deformation openness and closedness of various classes of compact complex manifolds; examples.
\newblock {\em ANNALI SCUOLA NORMALE SUPERIORE-CLASSE DI SCIENZE\/} (2014), 255--305.

\bibitem{popovici2015aeppli}
{\sc Popovici, D.}
\newblock Aeppli cohomology classes associated with {G}auduchon metrics on compact complex manifolds.
\newblock {\em Bulletin de la Soci{\'e}t{\'e} Math{\'e}matique de France 143}, 4 (2015), 763--800.

\bibitem{popovici2017volume}
{\sc Popovici, D.}
\newblock Volume and self-intersection of differences of two nef classes.
\newblock {\em ANNALI SCUOLA NORMALE SUPERIORE-CLASSE DI SCIENZE\/} (2017), 1255--1299.

\bibitem{popovici2019holomorphic}
{\sc Popovici, D.}
\newblock Holomorphic {D}eformations of {B}alanced {C}alabi-{Y}au $\partial\bar\partial$-{M}anifolds.
\newblock {\em Annales de l'Institut Fourier 69}, 2 (2019), 673--728.

\bibitem{popovici2022non}
{\sc Popovici, D.}
\newblock Non-{K}{\"a}hler {H}odge {T}heory and {D}eformations of {C}omplex {S}tructures.
\newblock {\em Book available on the author’s website.[hodge-def. pdf]\/} (2022).

\bibitem{popovici2023pluriclosed}
{\sc Popovici, D.}
\newblock Pluriclosed star split hermitian metrics.
\newblock {\em Mathematische Zeitschrift 305}, 1 (2023), 7.

\bibitem{rao2019local}
{\sc Rao, S., Wan, X., and Zhao, Q.}
\newblock On local stabilities of $p$-{K}{\"a}hler structures.
\newblock {\em Compositio Mathematica 155}, 3 (2019), 455--483.

\bibitem{rao2022power}
{\sc Rao, S., Wan, X., and Zhao, Q.}
\newblock Power series proofs for local stabilities of {K}{\"a}hler and balanced structures with mild-lemma.
\newblock {\em Nagoya Mathematical Journal 246\/} (2022), 305--354.

\bibitem{streets2010parabolic}
{\sc Streets, J., and Tian, G.}
\newblock A parabolic flow of pluriclosed metrics.
\newblock {\em International Mathematics Research Notices 2010}, 16 (2010), 3101--3133.

\bibitem{sullivan1976cycles}
{\sc Sullivan, D.}
\newblock Cycles for the dynamical study of foliated manifolds and complex manifolds.
\newblock {\em Inventiones mathematicae 36}, 1 (1976), 225--255.

\bibitem{tardini2017geometric}
{\sc Tardini, N., and Tomassini, A.}
\newblock On geometric {B}ott-{C}hern formality and deformations.
\newblock {\em Annali di Matematica Pura ed Applicata (1923-) 196}, 1 (2017), 349--362.

\bibitem{ugarte2015balanced}
{\sc Ugarte, L., and Villacampa, R.}
\newblock Balanced {H}ermitian geometry on 6-dimensional nilmanifolds.
\newblock {\em Forum Mathematicum 27}, 2 (2015).

\bibitem{wu2006geometry}
{\sc Wu, C.-C.}
\newblock {\em On the geometry of superstrings with torsion}.
\newblock Harvard University, 2006.

\bibitem{yachou1998varietes}
{\sc Yachou, A.}
\newblock {\em Sur les vari{\'e}t{\'e}s semi-{K}{\"a}hl{\'e}riennes}.
\newblock PhD thesis, Lille 1, 1998.

\bibitem{yau2023strominger}
{\sc Yau, S.-T., Zhao, Q., and Zheng, F.}
\newblock On {S}trominger {K}{\"a}hler-like manifolds with degenerate torsion.
\newblock {\em Transactions of the American Mathematical Society 376}, 05 (2023), 3063--3085.

\end{thebibliography}
\end{document}